\documentclass{amsart}

\usepackage{amssymb}
\usepackage{amsmath}
\usepackage{stmaryrd}
\usepackage{xparse}
\usepackage[breaklinks,colorlinks=true,allcolors=blue]{hyperref}
\usepackage{tikz}
\usepackage{todonotes}

\theoremstyle{plain}
\newtheorem{theorem}{Theorem}
\newtheorem{corollary}[theorem]{Corollary}
\newtheorem{lemma}[theorem]{Lemma}
\newtheorem{proposition}[theorem]{Proposition}

\theoremstyle{remark}
\newtheorem{remark}[theorem]{Remark}

\numberwithin{equation}{section}
\numberwithin{theorem}{section}

\newcommand{\R}{\mathbb{R}}
\newcommand{\Domain}{\Omega}
\newcommand{\Dim}{d}

\newcommand{\Leb}[1]{L^2(#1)}
\newcommand{\LebH}[1]{L^2_0(#1)}

\newcommand{\SobH}[1]{H^1_0(#1)}
\newcommand{\SobHD}[1]{H^{-1}(#1)}
\newcommand{\Norm}[1]{\| #1 \|}
\newcommand{\NormLeb}[2]{\Norm{#1}_{#2}}
\newcommand{\Normtr}[1]{ \lvert\!\lvert\!\lvert{#1} \rvert\!\rvert\!\rvert}  

\newcommand{\Grad}{\nabla}
\newcommand{\SymGrad}{\varepsilon}
\newcommand{\Div}{\mathrm{div}}
\newcommand{\GradM}{\Grad_\Mesh}
\newcommand{\SymGradM}{\SymGrad_\Mesh}
\newcommand{\DivM}{\Div_\Mesh}

\newcommand{\Smt}{\mathcal{E}}
\newcommand{\Rinv}{\mathcal{R}}
\newcommand{\Riesz}{\mathcal{L}}
\newcommand{\LebPro}[1]{\mathcal{P}_{#1}}
\newcommand{\Interp}{\mathcal{I}}

\newcommand{\Mesh}{\mathfrak{T}}
\newcommand{\Faces}{\mathfrak{F}}
\newcommand{\FacesInt}{{\Faces^i}}
\newcommand{\Verts}{\mathfrak{V}}
\newcommand{\VertsInt}{\Verts^i}
\newcommand{\Skel}{{\partial \Mesh}}
\newcommand{\MeshSize}{\mathsf{h}}
\newcommand{\MeshEl}{\mathsf{T}}
\newcommand{\FaceEl}{\mathsf{F}}
\newcommand{\VertsEl}{\mathsf{z}}
\newcommand{\Normal}{\mathsf{n}}
\newcommand{\Jump}[1]{\llbracket #1 \rrbracket}
\newcommand{\Avg}[1]{\{\!\!\{ #1\}\!\!\}}
\newcommand{\Shape}[1]{\gamma(#1)}

\newcommand{\Degree}{\ell}
\newcommand{\Poly}[1]{\mathbb{P}_{#1}}
\newcommand{\PolyPiec}[1]{\mathbb{S}_{#1}}
\newcommand{\PolyPiecAvg}[1]{\mathbb{S}_{#1,0}}
\newcommand{\CR}{\mathbb{CR}}
\newcommand{\CRLabel}{\mathrm{CR}}
\newcommand{\DGLabel}{\mathrm{dG}}
\newcommand{\Err}[1]{\mathrm{ERR}(#1)}
\newcommand{\ERR}{\mathrm{ERR}}


\begin{document}

\title[A nonsymmetric approach to the Biot's model I]{A nonsymmetric approach and a quasi-optimal and robust discretization for the Biot's model. \\Part I -- Theoretical aspects}

\author[A.~Khan]{Arbaz Khan}
\address{Department of Mathematics,	Indian Institute of Technology Roorkee (IITR), 247667 Roorkee, India}
\email{arbaz@ma.iitr.ac.in}

\author[P.~Zanotti]{Pietro Zanotti}
\address{Dipartimento di Matematica `F. Enriques', Universit\`{a} degli Studi di Milano, 20133 Milano, Italy}
\email{pietro.zanotti@unimi.it}

\keywords{Poroelasticity, Biot's consolidation model, robustness, quasi-optimality, inf-sup}

\subjclass[2010]{65N30, 65N12, 65N15, 76S05}

\begin{abstract}
We consider the system of partial differential equations stemming from the time discretization of the two-field formulation of the Biot's model with the backward Euler scheme. A typical difficulty encountered in the space discretization of this problem is the robustness with respect to various material parameters. We deal with this issue by observing that the problem is uniformly stable, irrespective of all parameters, in a suitable nonsymmetric variational setting. Guided by this result, we design a novel nonconforming discretization, which employs Crouzeix-Raviart and discontinuous elements. We prove that the proposed discretization is quasi-optimal and robust in a parameter-dependent norm and discuss the consequences of this result.   	
\end{abstract} 

\maketitle

\section{Introduction}
\label{S:introduction}

In the theory of poroelasticity, the Biot's consolidation model describes the flow of a fluid inside an elastic porous medium. The model has a wide range of applications covering, e.g., reservoir engineering, biomechanics and medicine. For this reason, the problem of devising effective discretization techniques has attracted increasing attention in recent years. 

In the basic two-field formulation of the Biot's model, the unknowns are the displacement of the medium and the fluid pressure. Various other formulations have been introduced over the years, in order to devise discretizations enjoying desirable properties, such as the robustness with respect to some of the material parameters and/or the conservation of relevant quantities. Three-field formulations introducing the Darcy's velocity are well-established, see e.g. \cite{Phillips.Wheeler:07,Phillips.Wheeler:08,Yi.13,Berger.Bordas.Kay.Tavener:15,Hu.Rodrigo.Gaspar.Zikatanov:17,Hong.Kraus:18}. More recently, a three-field formulation involving the so-called total pressure \cite{Oyarzua.RuizBaier:16,Lee.Mardal.Winther:17} and various other four-field formulations \cite{Korsawe.Starke:05,Haga.Osnes.Langtangen:12,Yi.14,Lee:16,Kumar.Oyarzua.RuizBaier.Sandilya:20} have been considered.

The Biot's model involves several material parameters. For extreme values of certain parameters, undesired numerical effects are possibly met in the discretization. In particular, volumetric locking and spurious oscillations of the fluid pressure may be observed when dealing with nearly incompressible and low permeable materials, respectively. The different nature of these effects is pointed out in \cite{Haga.Osnes.Langtangen:12}. The devising of discretizations that are robust in the critical regimes has been the subject of many papers. In addition to the aforementioned ones, we refer also to \cite{Chen.Luo.Feng:13,Nordbotten:16,Rodrigo.Gaspar.Hu.Zikatanov:16}. The robustness is typically achieved by employing discrete spaces that satisfy certain inf-sup conditions, namely the ones encountered in the approximation of the mixed formulation of the Stokes and of the Poisson problems see, for instance, \cite[section~4]{Haga.Osnes.Langtangen:12}, \cite[Definition~3.1]{Rodrigo.Hu.Ohm.Adler.Gaspar.Zikatanov:18} and \cite{Mardal.Rognes.Thompson:20}.

The loss of mass is another issue that possibly affects the discretization of the Biot's model. Treating the Darcy's velocity as an independent variable is a standard technique to assess local mass conservation. Among the other strategies we mention, for instance, the flux reconstruction of \cite{Riedlbeck.DiPietro.Ern.Granet.Kazymyrenko:17}.

In this paper we first propose a novel nonsymmetric variational setting for the two-field formulation of the problem resulting from the time discretization of the Biot's model with the backward Euler scheme. We equip the trial space with a parameter-dependent norm and the test space with a norm that is parameter-independent, after rescaling the equations. The motivation behind our approach is that we do not assume any scaling of the load terms with respect to the parameters. We prove well-posedness and stability in the proposed setting, irrespective of all parameters. The expression of the trial norm points out an equivalent nonsymmetric four-field formulation, treating the total pressure and the total fluid content as independent variables.

The proof of our stability result builds upon the continuity and the inf-sup stability of two auxiliary bilinear forms: the well-known $H^1$/$L^2$ form involved in the Stokes problem and the $H^{-1}$/$H^1_0$ dual pairing. The first one serves to control the $L^2$-norm of the total pressure, as done in \cite{Lee.Mardal.Winther:17}. The second one, which appears to be a new device in this context, is invoked in order to control the $H^{-1}$-norm of the total fluid content. 

The second contribution of this paper is a finite element discretization inspired by the aforementioned result. We use first-order Crouzeix-Raviart and discontinuous elements, respectively, for the displacement and for the fluid pressure. Our discretization equivalently reads as an approximation of the nonsymmetric four-field formulation of the problem, employing piecewise constants for both the total pressure and the total fluid content. The pairs of spaces used for the displacement and the total pressure and for the total fluid content and the fluid pressure are suitable for establishing counterparts of the continuity and of the inf-sup inequalities exploited in the analysis of the model problem. This yields a robust stability estimate and prevents, in particular, from volumetric locking and substantial mass losses. In contrast, the pair of spaces used for the displacement and the fluid pressure is not suitable for a Stokes-like inf-sup inequality, a property that is known to prevent from spurious pressure oscillations in low permeable materials, cf. \cite{Haga.Osnes.Langtangen:12}. Still, we can guarantee that the projection of the fluid pressure onto the piecewise constant functions is free from oscillations, see Remark~\ref{R:spurious-pressure-modes}.

We combine our stability results with a careful discretization of the load terms, inspired by the theory developed in \cite{Veeser.Zanotti:18,Veeser.Zanotti:19b, Veeser.Zanotti:18b}. As a result, we derive quasi-optimal and robust error estimates, meaning that the error of the overall discretization is bounded, up to a constant, by the corresponding best error in the employed spaces. The constant involved in this result is independent of all the material parameters and we do not invoke additional regularity of the solution beyond the minimal one.

The remaining part of the paper is organized as follows. Sections~\ref{S:continuous-problem} and \ref{S:discrete-problem} are devoted to the analysis of the model problem and of its discretization, respectively. In section~\ref{S:extensions} we discuss some extensions of our findings, including higher-order discretizations. Efficient solvers and numerical experiments are discussed in \cite{Khan.Zanotti}. 

\section{A nonsymmetric approach to the Biot's model}
\label{S:continuous-problem}

Let $\Domain \subseteq \R^\Dim$, $\Dim \in \{2, 3\}$, be an open and bounded polyhedron. Assume also that the boundary of $\Domain$ locally coincides with the graph of a Lipschitz-continuous function. For all measurable subsets $\widetilde \Domain \subseteq \Domain$, we denote by $\NormLeb{\cdot}{\widetilde{\Domain}}$ the $L^2$-norm on $\widetilde \Domain$. In this section we recall the equations of the (semi-discrete) Biot's model in $\Domain$ and state a result concerning their stability.

\subsection{Biot's model}
\label{SS:Biot}

The Biot's consolidation model consists of an equilibrium equation, prescribing the conservation of the momentum, and of a continuity equation, stating the conservation of the mass. The two equations read as follows
\begin{equation}
\label{Biot-time-dependent}
\begin{alignedat}{1}
-\Div( 2 \mu \SymGrad(u) + \lambda \Div(u) I - \alpha p_F I  ) 
& = f \quad \text{in} \: \Domain\\
\dfrac{\partial}{\partial t}( \alpha \Div(u) + \sigma p_F) - \Div(\overline{\kappa} \Grad p_F) 
& = \overline{g} \quad \text{in} \: \Domain. 
\end{alignedat}
\end{equation}
The unknowns are the displacement $u: \Domain \to \R^\Dim$ of the elastic medium and the fluid pressure $p_F: \Domain \to \R$. The symbol $\SymGrad(u) := (\Grad u + (\Grad u)^T)/2$ denotes the symmetric gradient of $u$ (i.e. the strain tensor), whereas $I$ stands for the $\Dim \times \Dim$ identity matrix. The model involves the following material parameters
\begin{itemize}
	\item $\lambda, \mu > 0$, the Lam\'{e} coefficients,
	\item $\alpha > 0$, the Biot-Willis constant,
	\item $\sigma \geq 0$, the constrained specific storage coefficient, and
	\item $\overline{\kappa} > 0$, the hydraulic conductivity. 
\end{itemize}
For simplicity, we assume hereafter that all the parameters are constant in $\Domain$. We complement the model by assuming that
\begin{equation}
\label{Biot-BCs}
u = 0 \qquad \text{and} \qquad p_F = 0 \quad \text{on} \: \partial \Domain.
\end{equation}
We refer to section~\ref{SS:mixed-BCs} for a discussion about more general boundary conditions.

The semi-discretization in time of \eqref{Biot-time-dependent} by the backward Euler scheme, with time step $\tau > 0$, results in a time-independent problem in the form
\begin{equation}
\label{Biot-semidiscrete}
\begin{alignedat}{1}
-\Div( 2 \mu \SymGrad(u) + \lambda \Div(u) I - \alpha p_F I  ) 
& = f \quad \text{in} \: \Domain\\
\alpha \Div(u) + \sigma p_F - \Div(\kappa \Grad p_F) 
& = g \quad \text{in} \: \Domain
\end{alignedat}
\end{equation}
where we have, in particular,
\begin{equation*}
\kappa := \tau \overline{\kappa}. 
\end{equation*}
The main concern of this paper is in establishing a stability estimate for this problem and in devising a discretization that are robust with respect to all parameters.

\subsection{Nonsymmetric variational setting}
\label{SS:nonsymmetric-setting}

To obtain a weak formulation of problem~\eqref{Biot-semidiscrete}, we multiply the two equations by smooth test functions and we integrate by parts, as usual. Correspondingly, we assume that the load terms $f$ and $g$ satisfy the regularity requirements
\begin{equation}
\label{loads-regularity}
f \in \SobHD{\Domain; \Dim} := (\SobH{\Domain}^\Dim)^\prime
\qquad \text{and} \qquad 
g \in \SobHD{\Domain} := (\SobH{\Domain})^\prime.
\end{equation} 
Slightly abusing the notation, we denote by $\left\langle \cdot, \cdot \right\rangle $ both the dual pairing of $\SobHD{\Domain; \Dim}$ with $\SobH{\Domain}^\Dim$ and the one of $\SobHD{\Domain}$ with $\SobH{\Domain}$. Then, we are led to the following linear variational problem: 
\begin{equation}
\label{Biot-weak}
\begin{gathered}[c]
\text{find} \quad (u, p_F)  \in \SobH{\Domain}^\Dim \times \SobH{\Domain} \quad \text{such that}\\
\forall (v, q_F) \in \SobH{\Domain}^\Dim \times \SobH{\Domain} 
\qquad 
b((u, p_F), (v, q_F))
=
\left\langle f, v\right\rangle + \left\langle g, q_F\right\rangle 
\end{gathered}
\end{equation}
where the bilinear form $b$ is given by
\begin{equation}
\label{form-b}
\begin{alignedat}{1}
b( (\widetilde u, \widetilde{p}_F), (v, q_F) ) := \:
& 2 \mu \int_\Domain \SymGrad(\widetilde u) \colon \SymGrad( v) 
+ \int_\Domain (\lambda \Div(\widetilde u) - \alpha \widetilde{p}_F) \Div(v)\\
& + \int_\Domain ( \alpha \Div(\widetilde{u}) + \sigma \widetilde{p}_F )q_F + \kappa \int_\Domain \Grad \widetilde{p}_F \cdot \Grad q_F
\end{alignedat}
\end{equation}
for all $(\widetilde u, \widetilde p_F), (v, q_F) \in \SobH{\Domain}^\Dim \times \SobH{\Domain}$. 

The existence and the uniqueness of the solution of this problem are well established, cf. Corollary~\ref{C:stability} below. Still, the devising and the analysis of robust discretizations require also sharp results concerning the stability of the solution. For this purpose, one usually introduces a norm $\Norm{\cdot}_1$ on the trial space (i.e. the space of all possible solutions) and a norm $\Norm{\cdot}_2$ on the test space (i.e. the space of all possible test functions). Then, assuming that the form $b$ is continuous and inf-sup stable in these norms, the equivalence
\begin{equation}
\label{stability-abstract}
\Norm{(u, p_F)}_1 \approx \Norm{(f, g)}_{2, \star}
\end{equation}   
is readily derived, where $\Norm{\cdot}_{2, \star}$ is the norm dual to $\Norm{\cdot}_2$ with respect to the dual pairing $\left\langle \cdot, \cdot \right\rangle $.

In our case, the trial and the test spaces coincide, thus suggesting to equip them by the same norm $\Norm{\cdot}$. (Note also that $b$ is symmetric, up to replacing the test function $q_F$ by $-q_F$.) This approach is quite popular, but it has the disadvantage that $\Norm{\cdot}$ must be parameter-dependent, in order to make the constants hidden in \eqref{stability-abstract} parameter-independent. Therefore, the norm dual to $\Norm{\cdot}$ must be parameter-dependent as well and we cannot ensure that the solution of problem~\eqref{Biot-weak} is uniformly bounded in $\Norm{\cdot}$, irrespective of all parameters, only by assumption \eqref{loads-regularity}.  

When additional informations on the load terms beyond \eqref{loads-regularity} are not available, it seems advisable to analyze the model problem in a nonsymmetric setting, by equipping the test space with a parameter-independent norm $\Norm{\cdot}_2$ and then looking for a corresponding trial norm $\Norm{\cdot}_1$ such that the continuity and the inf-sup constants of $b$ are parameter-independent. We deviate from this principle only in that we rescale the first equation of \eqref{Biot-semidiscrete} by $\sqrt{\kappa}$ and the second one by $\sqrt{2 \mu}$, to make sure that the scales in the two equations are balanced. In other words, we consider the test norm
\begin{equation}
\label{norm-test}
\Norm{(v, q_F)}_2 := 
\left( \dfrac{\NormLeb{\SymGrad(v)}{\Domain}^2}{\kappa} + 
\dfrac{\NormLeb{\Grad q_F}{\Domain}^2}{2 \mu} \right)^{\frac{1}{2}},   
\end{equation}
for $(v, q_F) \in \SobH{\Domain}^\Dim \times \SobH{\Domain}$. The importance of such rescaling is made clear by the proof of Theorem~\ref{T:cont-infsup} below, cf. Remark~\ref{R:alternative-test-norm}. The corresponding dual norm is 
\begin{equation}
\label{norm-test-dual}
\begin{split}
\Norm{(f, g)}_{2, \star} &:= 
\sup_{(v, q_F) \in \SobH{\Domain}^\Dim \times \SobH{\Domain}}
\dfrac{\left\langle f, v\right\rangle + \left\langle g, q_F\right\rangle }{\Norm{(v, q_F)}_2}\\
&=
\left( \kappa \Norm{f}_{\SobHD{\Domain; \Dim}}^2
+ 2 \mu \Norm{g}_{\SobHD{\Domain}}^2 \right)^{\frac{1}{2}}
\end{split}
\end{equation}
where $(f, g) \in \SobHD{\Domain; \Dim} \times \SobHD{\Domain}$.

Thus, we aim at finding a corresponding trial norm $\Norm{\cdot}_1$ so that the bilinear form $b$ is uniformly continuous and inf-sup stable, irrespective of all parameters. For this purpose, we denote by $\LebPro{\mathbb{W}}: \Leb{\Domain} \to \mathbb{W}$ the $L^2$-orthogonal projection onto a closed subspace $\mathbb{W} \subseteq \Leb{\Domain}$. Such projection is determined through the problem
\begin{equation*}
\forall w\in \mathbb{W} \qquad 
\int_\Domain \LebPro{\mathbb{W}}(q) w 
= 
\int_\Domain qw
\end{equation*}
for all $q \in \Leb{\Domain}$. 

The subspace of all $L^2$-functions with vanishing average over $\Domain$ is
\begin{equation*}
\label{lebH}
\LebH{\Domain} :=
\{ q_0 \in \Leb{\Domain} \mid \textstyle \int_\Domain q_0 = 0 \}.
\end{equation*}
The divergence operator maps $\SobH{\Domain}^\Dim$ onto $\LebH{\Domain}$ and, for all $q_0 \in \LebH{\Domain}$, we have the equivalence
\begin{subequations}
\label{cont-infsup-auxiliary}
\begin{equation}
\label{cont-infsup-stokes}
\sup_{v \in \SobH{\Domain}^\Dim} 
\dfrac{\int_\Domain q_0 \Div(v)}{\NormLeb{\SymGrad(v)}{\Domain}}
\approx
\NormLeb{q_0}{\Domain}
\end{equation}
where the hidden constants only depend on $\Domain$. This result follows from the continuity and the inf-sup stability of the $H^1/L^2$ bilinear form involved in the mixed formulation of the Stokes equations, combined with the Korn's first inequality, see, e.g., \cite[example~4.2.2]{Boffi.Brezzi.Fortin:13}.

Recall also the embedding $\Leb{\Domain} \hookrightarrow \SobHD{\Domain}$. For all $q \in \Leb{\Domain}$, the identity
\begin{equation}
\label{cont-infsup-riesz}
\sup_{q_F \in \SobH{\Domain}}
\dfrac{\int_\Domain q q_F}{\NormLeb{\Grad q_F}{\Domain}}
=
\Norm{q}_{\SobHD{\Domain}}
\end{equation}  
\end{subequations}
readily follows from the definition of the $H^{-1}$-norm.

We are now in position to state the main result of this section.

\begin{theorem}[Continuity and inf-sup stability of $b$]
\label{T:cont-infsup}
Let the form $b$ be defined as in \eqref{form-b}. For all $(\widetilde{u}, \widetilde{p}_F) \in \SobH{\Domain}^\Dim \times \SobH{\Domain}$, it holds that
\begin{equation}
\label{cont-infsup}
\sup_{(v, q_F) \in \SobH{\Domain}^\Dim \times \SobH{\Domain}}
\dfrac{b( (\widetilde{u}, \widetilde{p}_F), (v, q_F) )}{\Norm{(v, q_F)}_2} 
\approx
\Norm{(\widetilde{u}, \widetilde{p}_F)}_1
\end{equation}
where the hidden constants only depend on $\Domain$, the norm $\Norm{\cdot}_2$ is as in \eqref{norm-test} and
\begin{equation}
\label{norm-trial}
\begin{alignedat}{2}
\Norm{(\widetilde{u}, \widetilde{p}_F)}_1 := 
\Big ( & \mu^2\kappa \NormLeb{\SymGrad(\widetilde u)}{\Domain}^2 
+ \kappa \NormLeb{\lambda \Div(\widetilde u) - \alpha \LebPro{\LebH{\Domain}}(\widetilde p_F)}{\Domain}^2\\
+& \mu \kappa^2 \NormLeb{\Grad \widetilde p_F}{\Domain}^2 
+  \mu \Norm{\alpha \Div(\widetilde u) + \sigma \widetilde p_F}_{\SobHD{\Domain}}^2
\Big )^{\frac{1}{2}}.  
\end{alignedat}
\end{equation} 
\end{theorem}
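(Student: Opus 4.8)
The plan is to prove the two inequalities behind \eqref{cont-infsup} separately. The upper bound on the supremum is the continuity of $b$ with respect to $\Norm{\cdot}_1$ and $\Norm{\cdot}_2$, and for it I fix $(\widetilde u, \widetilde p_F)$ and bound each of the four summands defining $b$ in \eqref{form-b} by the Cauchy--Schwarz inequality. The only summand needing attention is the second: since $v \in \SobH{\Domain}^\Dim$ forces $\Div(v) \in \LebH{\Domain}$ by the divergence theorem, and since $\Div(\widetilde u) \in \LebH{\Domain}$ as well, I may rewrite $\int_\Domain (\lambda \Div(\widetilde u) - \alpha \widetilde p_F) \Div(v) = \int_\Domain (\lambda \Div(\widetilde u) - \alpha \LebPro{\LebH{\Domain}}(\widetilde p_F)) \Div(v)$, which together with the pointwise bound $\NormLeb{\Div(v)}{\Domain} \le \sqrt{\Dim}\,\NormLeb{\SymGrad(v)}{\Domain}$ reproduces the second contribution to $\Norm{(\widetilde u, \widetilde p_F)}_1$; the third summand is estimated by $\Norm{\alpha \Div(\widetilde u) + \sigma \widetilde p_F}_{\SobHD{\Domain}}\,\NormLeb{\Grad q_F}{\Domain}$ straight from the definition of the $H^{-1}$-norm, and the first and fourth are immediate. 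Collecting, in each estimate, the factor $\NormLeb{\SymGrad(v)}{\Domain} \le \sqrt{\kappa}\,\Norm{(v, q_F)}_2$ or $\NormLeb{\Grad q_F}{\Domain} \le \sqrt{2\mu}\,\Norm{(v, q_F)}_2$ then produces exactly the powers of $\kappa$ and $\mu$ appearing in \eqref{norm-trial}, so that $b((\widetilde u, \widetilde p_F), (v, q_F)) \lesssim \Norm{(\widetilde u, \widetilde p_F)}_1 \Norm{(v, q_F)}_2$ with a constant depending only on $\Domain$.

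For the inf-sup lower bound, write $\Norm{(\widetilde u, \widetilde p_F)}_1^2 = A + B + C + D$ according to the four summands in \eqref{norm-trial} and let $S$ denote the supremum on the left of \eqref{cont-infsup}. The starting point is that testing with $(v, q_F) = (\widetilde u, \widetilde p_F)$ makes the cross-terms involving $\alpha$ cancel, so that a direct computation gives
\begin{equation*}
b((\widetilde u, \widetilde p_F), (\widetilde u, \widetilde p_F)) = 2\mu\NormLeb{\SymGrad(\widetilde u)}{\Domain}^2 + \lambda\NormLeb{\Div(\widetilde u)}{\Domain}^2 + \sigma\NormLeb{\widetilde p_F}{\Domain}^2 + \kappa\NormLeb{\Grad \widetilde p_F}{\Domain}^2 .
\end{equation*}
Dividing by $\Norm{(\widetilde u, \widetilde p_F)}_2$ and distinguishing whether the first or the second term of $\Norm{(\widetilde u, \widetilde p_F)}_2$ is the larger --- the step where rescaling the two equations by $\sqrt{\kappa}$ and $\sqrt{2\mu}$ makes the two cases balance, cf.\ Remark~\ref{R:alternative-test-norm} --- yields $S \gtrsim \sqrt{A + C}$, because in each case $S$ controls the larger of $A$ and $C$ and that one bounds the smaller. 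To capture $B$, I set $q_0 := \lambda\Div(\widetilde u) - \alpha\LebPro{\LebH{\Domain}}(\widetilde p_F) \in \LebH{\Domain}$ and use \eqref{cont-infsup-stokes} to pick $w \in \SobH{\Domain}^\Dim$ with $\NormLeb{\SymGrad(w)}{\Domain} = \NormLeb{q_0}{\Domain}$ and $\int_\Domain q_0\Div(w) \gtrsim \NormLeb{q_0}{\Domain}^2$; testing with $(w, 0)$ the second summand of $b$ becomes $\int_\Domain q_0\Div(w) \gtrsim \NormLeb{q_0}{\Domain}^2$ (again because $\Div(w) \in \LebH{\Domain}$), while the first summand is a nuisance bounded in modulus by $2\mu\NormLeb{\SymGrad(\widetilde u)}{\Domain}\NormLeb{q_0}{\Domain}$, so, since $\Norm{(w, 0)}_2 = \NormLeb{q_0}{\Domain}/\sqrt{\kappa}$, this gives $S \gtrsim \sqrt{B} - c\sqrt{A}$ and therefore $\sqrt{A} + \sqrt{B} \lesssim S$ once combined with $S \gtrsim \sqrt{A}$. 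Symmetrically, to capture $D$ I set $r := \alpha\Div(\widetilde u) + \sigma\widetilde p_F$, which lies in $\Leb{\Domain} \hookrightarrow \SobHD{\Domain}$, use \eqref{cont-infsup-riesz} to pick $s \in \SobH{\Domain}$ with $\NormLeb{\Grad s}{\Domain} = \Norm{r}_{\SobHD{\Domain}}$ and $\int_\Domain r s \gtrsim \Norm{r}_{\SobHD{\Domain}}^2$, and test with $(0, s)$: the third summand of $b$ is $\gtrsim \Norm{r}_{\SobHD{\Domain}}^2$, the fourth is a nuisance controlled by $\sqrt{C}$ times $\Norm{(0, s)}_2 = \Norm{r}_{\SobHD{\Domain}}/\sqrt{2\mu}$, hence $S \gtrsim \sqrt{D} - c\sqrt{C}$ and $\sqrt{C} + \sqrt{D} \lesssim S$. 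Adding the two estimates, $\Norm{(\widetilde u, \widetilde p_F)}_1 = \sqrt{A + B + C + D} \le \sqrt{A} + \sqrt{B} + \sqrt{C} + \sqrt{D} \lesssim S$, which with the continuity bound proves \eqref{cont-infsup}.

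I expect the continuity half to be essentially bookkeeping and the substance to lie in the inf-sup half, more precisely in the fact that the energy test function $(\widetilde u, \widetilde p_F)$ only sees $A$ and $C$, so that the quantities $q_0$ and $r$ --- which become the total pressure and the total fluid content in the four-field reading of the problem --- have to be reached through the auxiliary stability properties \eqref{cont-infsup-stokes} and \eqref{cont-infsup-riesz}, whose correctors $w$ and $s$ re-inject, respectively, the elasticity of $\widetilde u$ and the Laplacian of $\widetilde p_F$. The main obstacle is then to check that these re-injected contributions stay subordinate --- uniformly in $\lambda, \mu, \alpha, \sigma$ and $\kappa$ --- to the quantities $A$ and $C$ already under control, i.e.\ that the couplings $A \leftrightarrow B$ and $C \leftrightarrow D$ are harmless; this is precisely where the rescaling keeps all powers of $\kappa$ and $\mu$ in step and makes every constant depend on $\Domain$ alone.
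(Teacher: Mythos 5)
Your proof is correct, and both halves check out: the continuity estimate matches the trial norm term by term once $\widetilde p_F$ is replaced by its mean-zero projection in the second summand, and for the inf-sup bound the three test functions $(\widetilde u, \widetilde p_F)$, $(w,0)$, $(0,s)$ together with the absorption $\sqrt{B}\lesssim S+\sqrt{A}$, $\sqrt{D}\lesssim S+\sqrt{C}$ deliver the lower bound. The argument is, however, genuinely different in structure from the paper's. The paper introduces the lifting operators $\Riesz_\SymGrad$ and $\Riesz_\Grad$ --- the same $w$ and $s$ you build --- but then rewrites $b$ \emph{exactly} as
\begin{equation*}
b((\widetilde u,\widetilde p_F),(v,q_F))
= \int_\Domain \SymGrad\bigl(2\mu\widetilde u + \Riesz_\SymGrad(q_0)\bigr):\SymGrad(v)
+ \int_\Domain \Grad\bigl(\Riesz_\Grad(r) + \kappa\widetilde p_F\bigr)\cdot\Grad q_F ,
\end{equation*}
so that the supremum over $(v,q_F)$ is the $L^2$-norm of a single optimized test direction. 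Expanding the two squares then produces your $A,B,C,D$ \emph{plus} two mixed terms $\mp4\mu\alpha\kappa\int\widetilde p_F\Div\widetilde u$ that cancel each other exactly; this is precisely where the $\sqrt{\kappa}$ / $\sqrt{2\mu}$ rescaling is used, and it is why the paper obtains the lower bound with no absorption at all and, as a byproduct, the sharper equivalence of Remark~\ref{R:equivalent-trial-norm} (the dropped nonnegative terms $\mu\lambda\kappa\NormLeb{\Div\widetilde u}{\Domain}^2$ and $\mu\sigma\kappa\NormLeb{\widetilde p_F}{\Domain}^2$). Your route is the classical ``coercivity plus auxiliary inf-sup plus absorption'' argument; it is more modular and perhaps more familiar, but it loses the exact cancellation --- the coupling between $A$ and $B$ (resp.\ $C$ and $D$) is tamed by a triangle inequality rather than being made to vanish --- and therefore does not yield the extra information of Remark~\ref{R:equivalent-trial-norm}. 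Also note that your case distinction for $S\gtrsim\sqrt{A+C}$ can be avoided: writing $X=a^2/\kappa$, $Y=c^2/(2\mu)$ gives directly $b((\widetilde u,\widetilde p_F),(\widetilde u,\widetilde p_F))\ge 2\mu\kappa(X+Y)$ and $\Norm{(\widetilde u,\widetilde p_F)}_2=(X+Y)^{1/2}$, hence $S\ge 2\mu\kappa\sqrt{X+Y}$ and $S^2\ge 4A+2C$.
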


\begin{proof}
The derivation of the upper bound `$\lesssim$' in \eqref{cont-infsup} is straight-forward, so we only prove the lower bound `$\gtrsim$'. First of all, we rewrite $b$ in a more convenient way, by means of two operators $\Riesz_\SymGrad: \LebH{\Domain} \to \SobH{\Domain}^\Dim$ and $\Riesz_\Grad: \Leb{\Domain} \to \SobH{\Domain}$. For all $q_0 \in \LebH{\Domain}$, we define $\Riesz_\SymGrad (q_0) \in \SobH{\Domain}^\Dim$ through the problem
\begin{equation*}
\label{Rdiv}
\forall v \in \SobH{\Domain}^\Dim
\qquad
\int_\Domain \SymGrad(\Riesz_\SymGrad(q_0)) \colon \SymGrad(v)
=
\int_\Domain q_0 \Div(v).
\end{equation*} 	
The equivalence \eqref{cont-infsup-stokes} entails that we have
\begin{equation}
\label{Rdiv-norm}
\NormLeb{\SymGrad(\Riesz_\SymGrad(q_0))}{\Domain}
=
\sup_{v \in \SobH{\Domain}^\Dim}
\dfrac{\int_\Domain \SymGrad(\Riesz_\SymGrad(q_0)) \colon \SymGrad(v) }{\NormLeb{\SymGrad(v)}{\Domain}}
\approx
\NormLeb{q_0}{\Domain}.
\end{equation}
Similarly, for all $q \in \Leb{\Domain}$, we define $\Riesz_\Grad(q) \in \SobH{\Domain}$ via the problem
\begin{equation*}
\label{Riesz}
\forall q_F \in \SobH{\Domain}
\qquad
\int_\Domain \Grad \Riesz_\Grad(q) \cdot \Grad q_F
= 
\int_\Domain q q_F
\end{equation*}
and it holds that
\begin{equation}
\label{Riesz-norm}
\NormLeb{\Grad \Riesz_\Grad(q)}{\Domain}
=
\sup_{q_F \in \SobH{\Domain}}
\dfrac{\int_\Domain \Grad\Riesz_\Grad(q) \cdot \Grad q_F }{\NormLeb{\Grad q_F}{\Domain}}
=
\NormLeb{q}{\SobHD{\Domain}}
\end{equation}
in view of \eqref{cont-infsup-riesz}. Hence, we see that
\begin{equation*}
\begin{alignedat}{1}
b( (\widetilde u, \widetilde{p}_F), (v, q_F) ) =
&  \int_\Domain \SymGrad( 2 \mu \widetilde u +
\Riesz_\SymGrad(\lambda \Div(\widetilde u) - \alpha \LebPro{\LebH{\Domain}}(\widetilde{p}_F))) \colon  \SymGrad(v)\\
+&  \int_\Domain \Grad( \Riesz_\Grad(\alpha \Div(\widetilde{u}) + \sigma \widetilde{p}_F ) + \kappa \widetilde{p}_F ) \cdot \Grad q_F
\end{alignedat}
\end{equation*}
for all $(\widetilde u, \widetilde p_F), (v, q_F) \in \SobH{\Domain}^\Dim \times \SobH{\Domain}$. This identity and the definition of the test norm $\Norm{\cdot}_2$ reveal that
\begin{equation*}
\label{cont-infsup-proof}
\sup_{(v, q_F) \in \SobH{\Domain}^\Dim \times \SobH{\Domain}}
\dfrac{b( (\widetilde{u}, \widetilde{p}_F), (v, q_F) )}{\Norm{(v, q_F)}_2} = ( \mathfrak{I}_1 + \mathfrak{I}_2  )^{\frac{1}{2}}
\end{equation*}
where
\begin{equation*}
\begin{gathered}
\mathfrak{I}_1 =
\kappa \NormLeb{ \SymGrad( 2\mu \widetilde{u} + \Riesz_\SymGrad(\lambda \Div(\widetilde u) - \alpha \LebPro{\LebH{\Domain}}(\widetilde{p}_F)) )}{\Domain}^2 \\
\mathfrak{I}_2 = 
2 \mu \NormLeb{ \Grad(\Riesz_\Grad(\alpha \Div(\widetilde{u}) + \sigma \widetilde{p}_F) + \kappa \widetilde p_F) }{ \Domain}^2.
\end{gathered}
\end{equation*}
The definition of the operator $\Riesz_\SymGrad$ implies that 
\begin{equation*}
\mathfrak{I}_1 \geq 
4 \mu^2 \kappa \NormLeb{\SymGrad(\widetilde{u})}{\Domain}^2
-
4 \mu \alpha \kappa \int_\Domain \widetilde{p}_F \Div(\widetilde{u})
+
\kappa \NormLeb{\SymGrad(\Riesz_\SymGrad(\lambda \Div(\widetilde u) - \alpha \LebPro{\LebH{\Domain}}(\widetilde{p}_F))) }{\Domain}^2. 
\end{equation*}
Similarly, by recalling the definition of the operator $\Riesz_\Grad$, we infer that
\begin{equation*}
\mathfrak{I}_2 \geq
2 \mu \kappa^2 \NormLeb{\Grad \widetilde p_F}{\Domain}^2
+
4 \mu \alpha \kappa \int_\Domain \widetilde{p}_F \Div(\widetilde{u})
+
2 \mu \NormLeb{\Grad\Riesz_\Grad(\alpha \Div(\widetilde{u}) + \sigma \widetilde{p}_F) }{\Domain}^2.
\end{equation*}
We conclude by inserting these inequalities into the previous identity and by recalling the equivalences \eqref{Rdiv-norm} and \eqref{Riesz-norm}.
\end{proof}

It is worth noticing that the operator $\Riesz_\Grad$ introduced in the proof of Theorem~\ref{T:cont-infsup} is the restriction to $\Leb{\Domain}$ of the Riesz isometry between $\SobHD{\Domain}$ and $\SobH{\Domain}$. Similarly, the operator $\Riesz_\SymGrad$ is the restriction to $\Grad \LebH{\Domain}$ of the Riesz isometry between $\SobHD{\Domain; \Dim}$ and $\SobH{\Domain}^\Dim$, where the gradient is intended in distributional sense. 

\begin{remark}[Equivalent trial norm]
	\label{R:equivalent-trial-norm}
	The proof of Theorem~\ref{T:cont-infsup} reveals that the norm $\Norm{\cdot}_1$ is equivalent to 
	\begin{equation*}
	\label{norm-trial-equivalent}
	\left( \Norm{(\widetilde{u}, \widetilde{p}_F)}^2_1
	+ \mu \lambda \kappa \NormLeb{\Div(\widetilde{u})}{\Domain}^2
	+ \mu \sigma \kappa \NormLeb{p_F}{\Domain}^2 \right)^{\frac{1}{2}}. 
	\end{equation*}
	Indeed, the lower bounds of $\mathfrak{I}_1$ and of $\mathfrak{I}_2$ become two identities when these additional terms are not neglected.
\end{remark}

\begin{remark}[Alternative setting]
\label{R:alternative-test-norm}	
If we drop the scaling factors $\kappa$ and $2\mu$ from the test norm in \eqref{norm-test}, the corresponding trial norm is
\begin{equation*}
\Big ( \; \NormLeb{ \SymGrad( 2\mu \widetilde{u} + \Riesz_\SymGrad(\lambda \Div(\widetilde u) - \alpha \LebPro{\LebH{\Domain}}(\widetilde{p}_F)) )}{\Domain}^2 
+ 
 \NormLeb{ \Grad(\Riesz_\Grad(\alpha \Div(\widetilde{u}) + \sigma \widetilde{p}_F) + \kappa \widetilde p_F) }{ \Domain}^2 
\;\Big )^{\frac{1}{2}}. 
\end{equation*}
By arguing as in the proof of Theorem~\ref{T:cont-infsup}, we see that each one of the above two summands is the sum of three nonnegative terms and of one `mixed' term. The scaling of the test norm considered in \eqref{norm-test} is tailored so as to ensure that the two mixed terms compensate each other.
\end{remark}

The identification of the trial norm $\Norm{\cdot}_1$ in Theorem~\ref{T:cont-infsup} allows us to establish the announced stability estimate for the solution of problem \eqref{Biot-weak}. 

\begin{corollary}[Stability]
\label{C:stability}
For all load terms $(f, g) \in \SobHD{\Domain; \Dim} \times\SobHD{\Domain}$, the problem~\eqref{Biot-weak} is uniquely solvable and its solution $(u, p_F) \in \SobH{\Domain}^\Dim \times \SobH{\Domain}$ fulfills \eqref{stability-abstract},
where the hidden constants only depend on $\Domain$ and the norms $\Norm{\cdot}_1$ and $\Norm{\cdot}_{2, \star}$ are as in \eqref{norm-trial} and \eqref{norm-test-dual}, respectively.
\end{corollary}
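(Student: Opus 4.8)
\emph{Proof proposal.} The plan is to obtain the corollary from Theorem~\ref{T:cont-infsup} by invoking the Banach--Ne\v{c}as--Babu\v{s}ka theorem (generalized Lax--Milgram). The trial and the test spaces coincide, as sets, with $\SobH{\Domain}^\Dim \times \SobH{\Domain}$, and the maps $\Norm{\cdot}_1$ and $\Norm{\cdot}_2$ are genuine norms on it: they are induced by inner products built from $L^2$- and $\SobHD{\Domain}$-scalar products, and their positive definiteness follows from the Korn's first inequality and the homogeneous boundary conditions. For each fixed choice of the parameters both norms are equivalent to the standard one, by Korn's and Poincar\'{e}'s inequalities, so $\SobH{\Domain}^\Dim \times \SobH{\Domain}$ is complete, hence reflexive, under either of them; the content of Theorem~\ref{T:cont-infsup} is precisely that the ensuing continuity and inf-sup constants do not deteriorate as the parameters degenerate. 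The load functional $(v, q_F) \mapsto \left\langle f, v\right\rangle + \left\langle g, q_F\right\rangle$ is bounded on the test space, with norm equal to $\Norm{(f, g)}_{2, \star}$ by the very definition \eqref{norm-test-dual}. Hence, once the hypotheses of the abstract theorem are checked, it delivers both the unique solvability of \eqref{Biot-weak} and the bound $\Norm{(u, p_F)}_1 \lesssim \Norm{(f, g)}_{2, \star}$ with a constant depending only on $\Domain$.

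Two hypotheses must be verified. The inf-sup condition (first argument in the trial space, second in the test space) is exactly the lower bound `$\gtrsim$' in \eqref{cont-infsup}, already proved in Theorem~\ref{T:cont-infsup}. The remaining nondegeneracy condition --- that no nonzero test function $(v, q_F)$ annihilates $b(\cdot\,, (v, q_F))$ --- is where the only real work lies, and I would dispatch it using the observation, recorded after \eqref{stability-abstract}, that $b$ becomes symmetric once the sign of the second pressure argument is reversed; that is, the bilinear form $\widetilde b$ given by $\widetilde b((\widetilde u, \widetilde p_F),(v, q_F)) := b((\widetilde u, \widetilde p_F),(v, -q_F))$ is symmetric in its two pair-arguments, as a term-by-term inspection of \eqref{form-b} shows. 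Consequently, if $b((\widetilde u, \widetilde p_F), (v, q_F)) = 0$ for all $(\widetilde u, \widetilde p_F)$, then $\widetilde b((v, -q_F), (\widetilde u, \widetilde p_F)) = 0$ for all $(\widetilde u, \widetilde p_F)$, i.e. $b((v, -q_F), (\widetilde u, \widetilde q_F)) = 0$ for all test functions $(\widetilde u, \widetilde q_F)$; applying the inf-sup bound of Theorem~\ref{T:cont-infsup} to the pair $(v, -q_F)$ then forces $\Norm{(v, -q_F)}_1 = 0$, hence $(v, q_F) = 0$.

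For the reverse inequality $\Norm{(f, g)}_{2, \star} \lesssim \Norm{(u, p_F)}_1$ I would use continuity directly: since $(u, p_F)$ solves \eqref{Biot-weak}, for every $(v, q_F) \in \SobH{\Domain}^\Dim \times \SobH{\Domain}$ we have $\left\langle f, v\right\rangle + \left\langle g, q_F\right\rangle = b((u, p_F), (v, q_F)) \lesssim \Norm{(u, p_F)}_1 \Norm{(v, q_F)}_2$ by the upper bound `$\lesssim$' in \eqref{cont-infsup}; dividing by $\Norm{(v, q_F)}_2$ and taking the supremum over $(v,q_F)$ yields the claim. Combining the two bounds gives the equivalence \eqref{stability-abstract}. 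I do not anticipate a genuine obstacle: essentially everything has been front-loaded into Theorem~\ref{T:cont-infsup}, and the only step not contained verbatim in it is the nondegeneracy check, which collapses --- thanks to the symmetry-up-to-sign of $b$ --- to a second application of the same inf-sup estimate. Equivalently, one may note that $\widetilde b$ is a bounded, symmetric, and inf-sup stable form on the single Hilbert space $\SobH{\Domain}^\Dim \times \SobH{\Domain}$, hence induces an isomorphism onto its dual, from which unique solvability and the two-sided estimate follow simultaneously.
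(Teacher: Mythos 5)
Your proposal is correct and follows the same overall route as the paper: invoke the Banach--Ne\v{c}as theorem, using Theorem~\ref{T:cont-infsup} for the continuity and inf-sup conditions, and derive the two-sided estimate~\eqref{stability-abstract} from~\eqref{norm-test-dual} and~\eqref{cont-infsup}. The one place you diverge is the nondegeneracy check on the test-side kernel. You establish it by observing that $b$ becomes symmetric after flipping the sign of the second pressure argument and then applying the inf-sup estimate a second time to the pair $(v, -q_F)$; this is valid, but it is more circuitous than necessary. The paper instead evaluates $b$ on the diagonal and notes
\begin{equation*}
b((v, q_F), (v, q_F)) =
2 \mu \NormLeb{\SymGrad(v)}{\Domain}^2
+
\lambda \NormLeb{\Div(v)}{\Domain}^2
+
\sigma \NormLeb{q_F}{\Domain}^2
+
\kappa \NormLeb{\Grad q_F}{\Domain}^2
> 0
\end{equation*}
for $(v, q_F) \neq (0,0)$, which gives nondegeneracy in one line without passing through the symmetry reformulation or a second appeal to the inf-sup bound. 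Both arguments ultimately exploit the same structural fact --- that $b$ is, up to a sign change in the test pressure, a symmetric positive form --- so the difference is one of packaging rather than of mathematical content; the paper's packaging is shorter. Your observation that one could equivalently work with $\widetilde b$ as a symmetric, bounded, inf-sup stable form on a single Hilbert space and conclude directly via the induced isomorphism onto the dual is also sound, and is essentially the symmetric reformulation the paper alludes to parenthetically just after~\eqref{stability-abstract}.
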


\begin{proof}
The existence and the uniqueness of the solution follow from the Banach-Ne\u{c}as theorem, see \cite[Theorem~2.6]{Ern.Guermond:04}. In fact, Theorem~\ref{T:cont-infsup} ensures that the form $b$ is continuous and inf-sup stable. Moreover, we have that
\begin{equation*}
b( (v, q_F), (v, q_F)) = 
2 \mu \NormLeb{\SymGrad(v)}{\Domain}^2
+
\lambda \NormLeb{\Div(v)}{\Domain}^2
+
\sigma \NormLeb{q_F}{\Domain}^2
+
\kappa \NormLeb{\Grad q_F}{\Domain}^2
> 0
\end{equation*}
for all $(v, q_F) \in \SobH{\Domain}^\Dim \times \SobH{\Domain}$, provided $(v, q_F) \neq (0, 0)$. This confirms that all the assumptions in the Banach-Ne\u{c}as theorem are fulfilled. Then, the claimed equivalence \eqref{stability-abstract} readily follows by using \eqref{norm-test-dual} and \eqref{cont-infsup} into problem \eqref{Biot-weak}. 	
\end{proof}

\subsection{A four-fields formulation}
\label{SS:four-fields}

The expression of the trial norm $\Norm{\cdot}_1$, identified in
Theorem~\ref{T:cont-infsup}, suggests that two auxiliary variables are implicitly involved in our analysis. The first one is the so-called total pressure
\begin{equation}
\label{total-pressure}
p_T := \lambda \Div(u) - \alpha \LebPro{\LebH{\Domain}}(p_F), \qquad p_T \in \LebH{\Domain}.
\end{equation} 
Three-fields formulations of the Biot's model treating the total pressure as a third independent unknown, in addition to $u$ and $p_F$, have been recently considered in \cite{Oyarzua.RuizBaier:16,Lee.Mardal.Winther:17}. The second auxiliary variable is the total fluid content
\begin{equation}
\label{total-fluid-content}
m := \alpha \Div(u) + \sigma p_F, 
\qquad m \in \Leb{\Domain}. 
\end{equation} 
Interestingly, the $H^{-1}$-, and not the $L^2$-norm, of $m$ enters into $\Norm{\cdot}_1$. This mismatch is not surprising, when recalling that the problem~\eqref{Biot-weak} originates from the time semi-discretization of \eqref{Biot-time-dependent}.

Though not strictly necessary, it is worth noticing that we obtain a well-posed four-field formulation of the Biot's model by treating both the total pressure and the total fluid content as additional independent unknowns. Indeed, if combine the equations \eqref{total-pressure} and \eqref{total-fluid-content} with problem \eqref{Biot-weak}, then we derive the following nonsymmetric linear variational problem: 
\begin{equation*}
\text{find} \quad (u, p_T, m, p_F) \in 
\SobH{\Domain}^\Dim \times
\LebH{\Domain} \times
\SobHD{\Domain} \times
\SobH{\Domain} \quad \text{such that}
\end{equation*}
\begin{equation}
\label{Biot-four-fields}
\begin{alignedat}{2}
& \forall v \in \SobH{\Domain}^\Dim&
\quad 
2 \mu \int_\Domain\SymGrad(u) \colon \SymGrad(v)
+ \int_\Domain p_T \Div(u)
& = \left\langle f, v\right\rangle\\
& \forall q_T \in \LebH{\Domain}&
\quad
\int_\Domain(\lambda \Div(u) - p_T - \alpha p_F) q_T 
& = 0\\
& \forall s \in \SobH{\Domain}&
\quad
\int_\Domain(\alpha \Div(u) - m + \sigma p_F) s
& = 0\\
& \forall q_F \in \SobH{\Domain}&
\quad
\int_\Domain m q_F + \kappa \int_\Domain \Grad p_F \cdot \Grad q_F
& = \left\langle g, q_F\right\rangle. 
\end{alignedat}
\end{equation}

The approach described in the previous section applies to the analysis of this problem up to minor modifications. In this case, we consider the test norm
\begin{equation*}
\left( 
\dfrac{\NormLeb{\SymGrad(v)}{\Domain}^2}{\kappa} 
+
\dfrac{\NormLeb{q_T}{\Domain}^2}{\kappa}
+
\dfrac{\NormLeb{\Grad s}{\Domain}^2}{2 \mu}
+
\dfrac{\NormLeb{\Grad q_F}{\Domain}^2}{2 \mu}  
\right)^{\frac{1}{2}}. 
\end{equation*} 
By arguing as in the proof of Theorem~\ref{T:cont-infsup}, it follows that the bilinear form involved in problem~\eqref{Biot-four-fields} is uniformly continuous and inf-sup stable, irrespective of all material parameters, provided that the trial norm is defined as
\begin{equation*}
\Big ( 
\Norm{(\widetilde{u}, \widetilde{p}_F)}_1^2
+ \kappa \NormLeb{\lambda \Div(\widetilde u) - \widetilde{p}_T -  \alpha \LebPro{\LebH{\Domain}}(\widetilde p_F)}{\Domain}^2
+ \mu \Norm{\alpha \Div(\widetilde u) -\widetilde{m} +   \sigma \widetilde p_F}_{\SobHD{\Domain}}^2
\Big )^{\frac{1}{2}}.  
\end{equation*} 
Here, compared to \eqref{norm-trial}, we have two additional terms, accounting for the possible relaxation of the constraints \eqref{total-pressure} and \eqref{total-fluid-content}.

\subsection{Guidelines for the discretization}
\label{SS:guidelines}

As mentioned in the introduction, the discretization of the Biot's model is possibly affected by volumetric locking, spurious pressure modes and loss of mass. The previous results and the following informal discussion appear to contribute to the identification of the origin of these undesired effects.

For `large' $\lambda$,  the term $\lambda \Div(u)$, hence the elastic stress tensor $2 \mu \SymGrad(u) + \lambda \Div(u) I$, enters into the stability estimate established in Corollary~\ref{C:stability} only through the $L^2$-norm of the total pressure $p_T$. Moreover, according to the proof of Theorem~\ref{T:cont-infsup}, the presence of the $L^2$-norm of $p_T$ in that estimate hinges on the equivalence \eqref{cont-infsup-stokes}. Therefore, when the discretization of problem~\eqref{Biot-weak} is concerned, we expect that the lack of a discrete counterpart of \eqref{cont-infsup-stokes} possibly results in a poor approximation of the elastic stress tensor, i.e. in volumetric locking.

Similarly, for `small' $\sigma$ and $\kappa$, the stability estimate in Corollary~\ref{C:stability} allows one to control the fluid pressure $p_F$ only through the $L^2$-norm of $p_T$. Hence, by arguing as before, we expect that a discretization of \eqref{Biot-weak} failing to reproduce \eqref{cont-infsup-stokes} is possibly affected by spurious pressure oscillations. This observation and the previous one suggest that
\begin{subequations}
\label{guidelines}
\begin{equation}
\label{guidelines-Stokes}
\begin{minipage}{0.75\hsize}
the displacement $u$ and the total pressure $p_T$ should be discretized by a pair of spaces enjoying a counterpart of \eqref{cont-infsup-stokes}.
\end{minipage}
\end{equation} 
Interestingly, both volumetric locking and spurious pressure oscillations seem to be related to the failure of the same condition, although the nature of the two effects is different, as pointed out in \cite{Haga.Osnes.Langtangen:12}.

Finally, the stability estimate in Corollary~\ref{C:stability} allows one to control also the $H^{-1}$-norm of the total fluid content $m$. By inspecting the proof of Theorem~\ref{T:cont-infsup}, it is clear that this is made possible by the identity \eqref{cont-infsup-riesz}. Thus, we expect that a discretization of \eqref{Biot-weak} is possibly affected by a substantial loss of mass when a counterpart of \eqref{cont-infsup-riesz} fails to hold. This suggests that
\begin{equation}
\label{guidelines-Riesz}
\begin{minipage}{0.75\hsize}
the total fluid content $m$ and the fluid pressure $p_F$ should be discretized by a pair of spaces enjoying a counterpart of \eqref{cont-infsup-riesz}.
\end{minipage}
\end{equation}
\end{subequations}  

The condition \eqref{guidelines-Stokes} is well-known from the discretization of the Stokes equations and several pairs of finite element spaces fulfilling it are known in the literature. In contrast, we are aware of only one (qualitative) result related to the condition \eqref{guidelines-Riesz}, see \cite{Bartels.Wang:20}.

\section{A quasi-optimal and robust discretization}
\label{S:discrete-problem}

In this section we devise and analyze a finite element discretization of problem~\eqref{Biot-weak}, that can be equivalently interpreted as a discretization of \eqref{Biot-four-fields}. For simplicity, we restrict our attention to the lowest-order case. We briefly address the derivation of higher-order discretizations in section~\ref{SS:higher-order}.

\subsection{Simplicial meshes and finite element spaces}
\label{SS:meshes}

Let $\Mesh$ be a face-to-face simplicial mesh of $\Omega$. The shape parameter $\Shape{\Mesh}$ of $\Mesh$ is defined as
\begin{equation*}
\label{shape-parameter}
\Shape{\Mesh} := \max_{\MeshEl \in \Mesh} \dfrac{\mathrm{diam(\MeshEl)}}{\mathrm{diam}(\mathsf{B_\MeshEl})}
\end{equation*}
where $\mathsf{B}_\mathsf{T}$ indicates the largest ball inscribed in a simplex $\MeshEl \in \Mesh$. The broken version $\mathcal{D}_\Mesh$ of a differential operator $\mathcal{D}$ is given by
\begin{equation*}
\label{broken-operator}
\forall \MeshEl \in \Mesh
\qquad
(\mathcal{D} v)_{|\MeshEl} := \mathcal{D}(v_{|\MeshEl})
\end{equation*}
where $v$ is a piecewise smooth function on $\Mesh$. 

The sets $\Faces$ and $\FacesInt$ consist, respectively, of all the faces and of all the interior faces of $\Mesh$. The skeleton $\Skel$ is obtained by taking the union of all the faces of $\Mesh$. The operators
\begin{equation*}
\label{jump-average}
\text{jump} \quad \Jump{\cdot}
\qquad \text{and} \qquad
\text{average} \quad \Avg{\cdot}
\end{equation*}
map piecewise smooth functions on $\Mesh$ into piecewise smooth functions on $\Skel$ and are defined as usual, see, for instance, \cite[Definition~1.17]{DiPietro.Ern:12}. When composing the jump or the average with a broken differential operator, we omit the subscript $\Mesh$, to alleviate the notation, cf. \eqref{form-DG} below. 

We extend the outer normal unit vector $\Normal_{\partial \Domain}$ of $\Domain$ to a piecewise constant vector field $\Normal: \Skel \to \R^d$. For this purpose, we prescribe a normal unit vector $\Normal_\FaceEl$ for each interior face $\FaceEl \in \FacesInt$. The orientation of $\Normal_\FaceEl$ does not affect our subsequent discussion. Then, we set
\begin{equation*}
\label{normal-field}
\Normal_{|\FaceEl} := \Normal_\FaceEl \quad \text{if} \;\; \FaceEl \in \FacesInt
\qquad \text{and} \qquad
\Normal_{|\FaceEl} := \Normal_{\partial \Domain} \quad \text{if} \;\; \FaceEl \in \Faces \setminus \FacesInt.
\end{equation*}
We also consider the following piecewise constant meshsize function $\MeshSize: \Skel \to \R$ on the skeleton of $\Mesh$
\begin{equation*}
\MeshSize_{|\FaceEl} := \mathrm{diam}(\FaceEl) \qquad \forall \FaceEl \in \Faces.
\end{equation*}

For a nonnegative integer $\Degree \geq 0$ and a simplex $\MeshEl \in \Mesh$, the space $\Poly{\Degree}(\MeshEl)$ consists of all polynomials of total degree $\leq \Degree$ on $\MeshEl$. The corresponding space of possibly discontinuous piecewise polynomials over $\Mesh$ is
\begin{equation*}
\PolyPiec{\Degree} := \{ S: \Domain \to \R \mid \forall \MeshEl \in \Mesh \quad S_{|\MeshEl} \in \Poly{\Degree}(\MeshEl) \}.
\end{equation*} 
We shall repeatedly make use also of the one-codimensional subspace
\begin{equation*}
\PolyPiecAvg{\Degree} 
:=
\PolyPiec{\Degree} \cap \LebH{\Domain}
=
\{ S \in \PolyPiec{\Degree} \mid \int_\Domain S =0 \} 
\end{equation*} 
and of the lowest-order Crouzeix-Raviart space with zero boundary values
\begin{equation}
\label{CR-space}
\CR 
:=
\{ S \in \PolyPiec{1} \mid \forall \FaceEl \in \Faces \quad \int_{\FaceEl} \Jump{S} = 0\}.
\end{equation}

\subsection{Finite element discretization}
\label{SS:FE-discretization}

The stability estimate established in Corollary~\ref{C:stability} involves $H^1$-like norms of the displacement $u$ and of the fluid pressure $p_F$, the $L^2$-norm of the total pressure $p_T$ from \eqref{total-pressure} and the $H^{-1}$-norm of the total fluid content $m$ introduced in \eqref{total-fluid-content}. This indicates that, in principle, we may obtain a first-order discretization of the problem~\eqref{Biot-weak} or, equivalently, of \eqref{Biot-four-fields}, by using piecewise affine functions for approximating $u$ and $p_F$ and piecewise constant functions for approximating $p_T$ and $m$. Therefore, owing to the inclusion $p_T \in \LebH{\Domain}$, we look for approximations
\begin{equation}
\label{spaces-pT-m}
P_T \in \PolyPiecAvg{0} \quad \text{of} \quad p_T
\qquad \text{and} \qquad
M \in \PolyPiec{0} \quad \text{of} \quad m.
\end{equation}

The condition \eqref{guidelines-Stokes} discourages us from approximating $u$ by globally continuous piecewise affine functions because, with this choice, a discrete counterpart of \eqref{cont-infsup-stokes} fails to hold on most meshes, cf. \cite[section~8.3.2]{Boffi.Brezzi.Fortin:13}. Instead, the Crouzeix-Raviart pair $\CR^\Dim/\PolyPiecAvg{0}$ is known to fulfill \eqref{guidelines-Stokes}, see Proposition~\ref{P:cont-infsup-auxiliary-discrete} below. Similarly, we avoid the use of continuous piecewise affine functions and of Crouzeix-Raviart functions for approximating $p_F$. In fact, \eqref{guidelines-Riesz} prescribes, in particular, an inf-sup condition that fails to hold on certain meshes, due to the presence of spurious modes, see Figure~\ref{F:spurious-mode}. We refer to \cite[section~3]{Bartels.Wang:20} for a more extensive discussion on the existence of spurious modes when Crouzeix-Raviart functions are concerned. Using discontinuous piecewise affine functions prevents from the existence of spurious modes, according to the inclusion $\PolyPiec{0} \subseteq \PolyPiec{1}$. We provide a quantitative counterpart of this qualitative observation in Proposition~\ref{P:cont-infsup-auxiliary-discrete}. Thus, we look for approximations 
\begin{equation}
\label{spaces-u-pF}
U \in \CR^\Dim \quad \text{of} \quad u
\qquad \text{and} \qquad
P_F \in \PolyPiec{1} \quad \text{of} \quad p_F.
\end{equation}

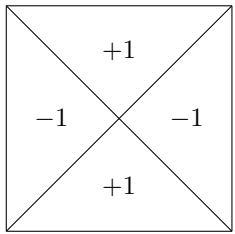
\begin{figure}[ht]
	\centering
	\begin{tikzpicture}
	\coordinate (z1) at (0,0);
	\coordinate (z2) at (3,0);
	\coordinate (z3) at (3,3);
	\coordinate (z4) at (0,3);
	\coordinate (c)  at (1.5,1.5);
	\path (z1) edge (z2);
	\path (z2) edge (z3);
	\path (z3) edge (z4);
	\path (z4) edge (z1);
	\path (z1) edge (z3);
	\path (z2) edge (z4);
	\node at (1.5,0.6) {$+1$};
	\node at (1.5,2.4) {$+1$};
	\node at (0.6,1.5) {$-1$};
	\node at (2.4,1.5) {$-1$};
	\end{tikzpicture}
	\caption{A piecewise constant function that annihilates the $L^2$-scalar product times all the continuous piecewise affine functions and all the Crouzeix-Raviart functions with zero boundary values.}
	\label{F:spurious-mode}
\end{figure}

Having prescribed a finite element space for the approximation of each variable involved in the Biot's model, we aim at introducing a discrete counterpart of the bilinear form $b$ in \eqref{Biot-weak}-\eqref{form-b}. For this purpose, we consider the bilinear form $A_\CRLabel: \CR^\Dim \times \CR^\Dim \to \R$ 
\begin{equation}
\label{form-CR}
A_\CRLabel(\widetilde U, V) 
:= 
\int_\Domain \SymGradM(\widetilde{U}) \colon \SymGradM(V)
+
\int_{\Skel} \MeshSize^{-1} \Jump{\widetilde{U}} \cdot \Jump{V} 
\end{equation}
for all $\widetilde{U}, V \in \CR^\Dim$. This form serves as a replacement of the $L^2$-scalar product of the symmetric gradients. The jump penalization prevents from the lack of a discrete Korn's inequality \cite{Arnold:93} and it is inspired by the results in \cite{Brenner:04}, cf \eqref{Korn-piecewise} below. 

We discretize the $L^2$-scalar product of the gradients by the so-called symmetric interior penalty bilinear form $A_\DGLabel:\PolyPiec{1} \times \PolyPiec{1} \to \R$, that is often employed in discontinuous Galerkin methods, see, e.g., \cite[Chapter~4]{DiPietro.Ern:12}. This form is defined as
\begin{equation}
\label{form-DG}
\begin{split}
A_\DGLabel(\widetilde{P}_F, Q_F) &:=
\int_\Domain \GradM \widetilde{P}_F \cdot \GradM Q_F
-
\int_{\Skel} \Avg{\Grad \widetilde{P}_F} \cdot \Normal \Jump{Q_F}\\
&-
\int_{\Skel} \Jump{\widetilde{P}_F} \Avg{\Grad Q_F} \cdot \Normal
+
\int_{\Skel} \dfrac{\eta}{\MeshSize} \Jump{\widetilde{P}_F} \Jump{Q_F}
\end{split}
\end{equation}
for all $\widetilde{P}_F, Q_F \in \PolyPiec{1}$, where $\eta > 0$ is a penalty parameter to be specified later. 

The inclusions \eqref{spaces-pT-m} suggest to consider the following counterparts
\begin{equation}
\label{pT-m-discrete}
P_T = \lambda \DivM(U) - \alpha \LebPro{\PolyPiecAvg{0}}(P_F)
\qquad \text{and}\qquad
M = \alpha \DivM(U) + \sigma \LebPro{\PolyPiec{0}} (P_F)
\end{equation}
of the relations \eqref{total-pressure} and \eqref{total-fluid-content}, where $\LebPro{\PolyPiec{0}}$ and $\LebPro{\PolyPiecAvg{0}}$ are the $L^2$-orthogonal projections onto $\PolyPiec{0}$ and $\PolyPiecAvg{0}$, respectively.

We obtain a discrete counterpart $B: (\CR^\Dim \times \PolyPiec{0}) \times (\CR^\Dim \times \PolyPiec{0}) \to \R$ of the bilinear form $b$ in \eqref{Biot-weak}-\eqref{form-b} by combining the ingredients listed above, namely
\begin{equation}
\label{form-B}
\begin{alignedat}{1}
B( (\widetilde U, \widetilde{P}_F), (V, Q_F) ) :=
& 2 \mu A_\CRLabel(\widetilde{U}, V) 
+ \int_\Domain (\lambda \DivM(\widetilde U) - \alpha \widetilde{P}_F) \DivM(V)\\
& + \int_\Domain ( \alpha \DivM(\widetilde{U}) + \sigma \LebPro{\PolyPiec{0}}(\widetilde{P}_F) )Q_F + \kappa A_\DGLabel(\widetilde{P}_F, Q_F)
\end{alignedat}
\end{equation}
for all $(\widetilde U, \widetilde P_F), (V, Q_F) \in \CR^\Dim \times \PolyPiec{0}$.

\begin{remark}[Reduced integration]
\label{R:reduced-integration}
We have replaced the piecewise affine function $P_F$ by its $L^2$-orthogonal projection onto piecewise constants in the second part of \eqref{pT-m-discrete}, so as to enforce the inclusion $M \in \PolyPiec{0}$. This has the effect that the $L^2$ scalar product $\int_\Domain \widetilde{p}_F q_F$ in the definition \eqref{form-b} of the form $b$ is replaced by $\int_\Domain \LebPro{\PolyPiec{0}}(\widetilde{P}_F) \LebPro{\PolyPiec{0}}(Q_F)$ in $B$, i.e. it is discretized by a reduced integration technique.  
\end{remark}

The right-hand side of the problem~\eqref{Biot-weak} deserves to be discretized as well. To this end, we cannot just take the restriction of the loads $f$ and $g$ to the spaces $\CR^\Dim$ and $\PolyPiec{1}$, respectively. In fact, these spaces are nonconforming, meaning that, in general, we have
\begin{equation*}
\CR^d \nsubseteq \SobH{\Domain}^\Dim
\qquad \text{and} \qquad 
\PolyPiec{1} \nsubseteq \SobH{\Domain}.
\end{equation*} 
Most often, this issue is dealt with by assuming that $f$ and $g$ are more regular than in \eqref{loads-regularity}, for instance $f \in \Leb{\Domain}^\Dim$ and $g \in \Leb{\Domain}$, so that the products $\int_\Domain f \cdot V$ and $\int_\Domain g Q_F$ are defined for all $V \in \CR^\Dim$ and $Q_F \in \PolyPiec{1}$. We briefly consider this option in section~\ref{SS:medius-analysis} below. Here, inspired by the abstract results in \cite{Veeser.Zanotti:18}, we approach the problem differently. Instead of invoking additional regularity of the data beyond \eqref{loads-regularity}, we introduce two linear operators 
\begin{equation*}
\label{smoothers}
\Smt_\CRLabel : \CR^\Dim \to \SobH{\Domain}^\Dim
\qquad \text{and} \qquad
\Smt_\DGLabel: \PolyPiec{1} \to \SobH{\Domain} 
\end{equation*}
and we observe that the dualities $\left\langle f, \Smt_\CRLabel (V) \right\rangle $ and $\left\langle g, \Smt_\DGLabel (Q_F) \right\rangle $ make sense for all loads $(f, g) \in \SobHD{\Domain; \Dim} \times \SobHD{\Domain}$ and for all test functions $(V, Q_F) \in \CR^\Dim \times \PolyPiec{1}$.

Let us assume for the moment that $\Smt_\CRLabel$ and $\Smt_\DGLabel$ are given. Then, we consider the following discretization of the problem~\eqref{Biot-weak}: 
\begin{equation}
\label{Biot-discrete}
\begin{gathered}[c]
\text{find} \quad (U, P_F)  \in \CR^\Dim \times \PolyPiec{1} \quad \text{such that}\\
\forall (V, Q_F) \in \CR^\Dim \times \PolyPiec{1} 
\quad \; 
B((U, P_F), (V, Q_F))
=
\left\langle f, \Smt_\CRLabel (V)\right\rangle + \left\langle g, \Smt_\DGLabel (Q_F)\right\rangle. 
\end{gathered}
\end{equation}
Proceeding as in Section~\ref{SS:four-fields}, we could combine this problem with the identities in \eqref{pT-m-discrete} and derive a discretization of the four-field formulation \eqref{Biot-four-fields}.

\begin{remark}[Guidelines for $\Smt_\CRLabel$ and $\Smt_\DGLabel$]
\label{R:guidelines-smooothers}
The operators $\Smt_\CRLabel$ and $\Smt_\DGLabel$ must be explicitly computed when assembling the problem~\eqref{Biot-discrete}. Therefore, it is important that their action can be `easily' evaluated. To this end, they should involve, at most, the solution of finite-dimensional local problems. Additionally, one may expect that the size of the continuity constants of $\Smt_\CRLabel$ and $\Smt_\DGLabel$ plays an important role when establishing a counterpart of the stability estimate in Corollary~\ref{C:stability}. The use of the averaging operator in \eqref{smoother} serves to keep such constants under control, as stated by Proposition~\ref{P:smoother-bounded}. Finally, $\Smt_\CRLabel$ and $\Smt_\DGLabel$ should be consistent with the bilinear form $B$, in a sense that could be made rigorous in the vein of \cite[Definition~2.7]{Veeser.Zanotti:18}. We enforce consistency by prescribing the conservation of the lowest-order moments in the simplices and on the interior faces of $\Mesh$, cf. Lemma~\ref{L:moments-preserved}.
\end{remark}

Our construction of the operators $\Smt_\CRLabel$ and $\Smt_\DGLabel$ is inspired by \cite[section~3]{Veeser.Zanotti:19b} and \cite[section~3]{Veeser.Zanotti:18b}. Denote by $\Verts$ and by $\VertsInt$, respectively, the sets collecting all the vertices and all the interior vertices of $\Mesh$. Recall that the space $\SobH{\Domain} \cap \PolyPiec{1}$ consists of continuous piecewise affine functions on $\Mesh$ and that its Lagrange basis $(S_\VertsEl)_{\VertsEl \in \VertsInt}$ is indexed by the interior vertices. A simple strategy to map discontinuous piecewise affine functions into continuous ones consists in averaging the point values around each vertex, cf. \cite[section~5.5.2]{DiPietro.Ern:12}. More precisely, we consider the linear operator $\mathcal{A}: \PolyPiec{1} \to \SobH{\Domain} \cap \PolyPiec{1}$ defined as follows
\begin{equation}
\label{averaging-operator}
\mathcal{A}(S) := 
\sum_{\VertsEl \in \VertsInt} \dfrac{1}{\# \Mesh_\VertsEl} 
\left( \sum_{\MeshEl \in \Mesh_\VertsEl} S_{|\MeshEl}(\VertsEl) \right) S_\VertsEl 
\end{equation} 
for all $S \in \PolyPiec{1}$. Here, the local mesh $\Mesh_\VertsEl$, $\VertsEl \in \VertsInt$, consists of all the simplices of $\Mesh$ touching the vertex $\VertsEl$.

For each interior face $\FaceEl \in \FacesInt$ and for each simplex $\MeshEl \in \Mesh$, we consider the bubble functions
\begin{equation}
\label{bubble-functions}
S_\FaceEl 
:=
\dfrac{(2\Dim-1)!}{(\Dim-1)! |\FaceEl|}
\prod_{\VertsEl \in \Verts \cap \FaceEl} S_\mathsf{z}
\qquad \text{and} \qquad
S_\MeshEl 
:=
\dfrac{(2\Dim+1)!}{\Dim! |\MeshEl|}
\prod_{\VertsEl \in \Verts \cap \MeshEl} S_\mathsf{z}
\end{equation}
where $|\FaceEl|$ and $|\MeshEl|$ are the $(\Dim-1)$-dimensional and the $\Dim$-dimensional Lebesgue measures of $\FaceEl$ and $\MeshEl$, respectively. Functions in this form are widely used, e.g., in the a posteriori analysis of finite element methods, see \cite[section~3.2.3]{Verfuerth:13}. Notice that $S_\FaceEl \in \SobH{\Domain} \cap \PolyPiec{\Dim}$ and  $S_\MeshEl \in \SobH{\Domain} \cap \PolyPiec{\Dim+1}$. Moreover, the functions $S_\FaceEl$ and $S_\MeshEl$ are locally supported and are scaled so that the identities in \eqref{bubble-integral} below hold true.

Let $\Smt: \PolyPiec{1} \to \SobH{\Domain}$ be given by
\begin{equation}
\label{smoother}
\Smt(S) := \mathcal{A}(S) + 
\sum_{\FaceEl \in \FacesInt} \left( \int_\FaceEl (\Avg{S} - \mathcal{A}(S)) \right) S_\FaceEl 
\end{equation}
for all $S \in \PolyPiec{1}$. This operator coincides with the ones in \cite[Lemma~3.3]{Carstensen.Schedensack:15} for $\Dim =2$ and in \cite[Proposition~3.4]{Veeser.Zanotti:18b} for general $\Dim \geq 2$. We define $\Smt_\CRLabel: \CR^\Dim \to \SobH{\Domain}^\Dim$ by
\begin{equation}
\label{smoother-CR}
\Smt_\CRLabel(V) := ( \Smt(V_1), \dots, \Smt(V_\Dim) )
\end{equation}
for all $V = (V_1, \dots, V_\Dim) \in \CR^\Dim$. Furthermore, we define $\Smt_\DGLabel: \PolyPiec{1} \to \SobH{\Domain}$ by
\begin{equation}
\label{smoother-DG}
\Smt_\DGLabel(Q_F) := \Smt(Q_F) + 
\sum_{\MeshEl \in \Mesh}
\left( \int_\MeshEl (Q_F - \Smt(Q_F)) \right) S_\MeshEl 
\end{equation}
for all $Q_F \in \PolyPiec{1}$. 
The `correction' of the averaging operator $\mathcal{A}$ by the bubble functions ensures the validity of the following result.  

\begin{lemma}[Moments preserved by $\Smt_\CRLabel$ and $\Smt_\DGLabel$]
\label{L:moments-preserved}	
Let $V \in \CR^\Dim$ and $Q_F \in \PolyPiec{1}$ be given. The operators $\Smt_\CRLabel$ and $\Smt_\DGLabel$ defined in \eqref{smoother-CR} and \eqref{smoother-DG}, respectively, are such that
\begin{subequations}
\label{moments-preserved}
\begin{equation}
\label{moments-preserved-CR}
\int_\FaceEl \Smt_\CRLabel(V) 
=
\int_\FaceEl V
\end{equation}
as well as
\begin{equation}
\label{moments-preserved-DG}
\int_\FaceEl \Smt_\DGLabel(Q_F) 
= 
\int_\FaceEl \Avg{Q_F}
\qquad \text{and} \qquad
\int_\MeshEl \Smt_\DGLabel(Q_F)
=
\int_\MeshEl Q_F
\end{equation}
\end{subequations}
for all $F \in \FacesInt$ and for all $T \in \Mesh$.
\end{lemma}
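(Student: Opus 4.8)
The plan is to reduce the whole statement to a single identity for the scalar operator $\Smt$ of \eqref{smoother} and then to feed it through the componentwise definition \eqref{smoother-CR} of $\Smt_\CRLabel$ and the additive definition \eqref{smoother-DG} of $\Smt_\DGLabel$. The only structural facts I would invoke are the normalization and locality of the bubble functions, recorded in \eqref{bubble-integral}: for $\FaceEl, \FaceEl' \in \FacesInt$ one has $\int_{\FaceEl} S_{\FaceEl'} = \delta_{\FaceEl\FaceEl'}$; for $\MeshEl, \MeshEl' \in \Mesh$ one has $\int_{\MeshEl} S_{\MeshEl'} = \delta_{\MeshEl\MeshEl'}$; and, since $S_\MeshEl$ is supported in $\MeshEl$ and vanishes on $\partial\MeshEl$, $\int_\FaceEl S_\MeshEl = 0$ for every $\FaceEl \in \FacesInt$. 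I would also use that $\mathcal{A}(S) \in \SobH{\Domain} \cap \PolyPiec{1}$ is continuous, so its trace on any face is single-valued and coincides there with $\Avg{\mathcal{A}(S)}$. The key step is then to show that $\int_\FaceEl \Smt(S) = \int_\FaceEl \Avg{S}$ for every $S \in \PolyPiec{1}$ and every $\FaceEl \in \FacesInt$: integrating \eqref{smoother} over a face $\FaceEl$ and using $\int_\FaceEl S_{\FaceEl'} = \delta_{\FaceEl\FaceEl'}$, only the bubble associated to $\FaceEl$ contributes to the sum, so $\int_\FaceEl \Smt(S) = \int_\FaceEl \mathcal{A}(S) + \int_\FaceEl(\Avg{S} - \mathcal{A}(S)) = \int_\FaceEl \Avg{S}$.

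From this, the three claimed identities follow by short computations. For \eqref{moments-preserved-CR}, applying the key step to each component of $V = (V_1, \dots, V_\Dim) \in \CR^\Dim$ and recalling \eqref{smoother-CR} gives $\int_\FaceEl \Smt_\CRLabel(V) = \int_\FaceEl \Avg{V}$; since $V_i \in \CR$, the defining property \eqref{CR-space} yields $\int_\FaceEl \Jump{V_i} = 0$, so the two one-sided trace integrals of $V_i$ on $\FaceEl$ coincide with $\int_\FaceEl \Avg{V_i}$, whence $\int_\FaceEl \Smt_\CRLabel(V) = \int_\FaceEl V$. For the first identity in \eqref{moments-preserved-DG}, I integrate \eqref{smoother-DG} over $\FaceEl \in \FacesInt$; since $\int_\FaceEl S_\MeshEl = 0$ for every $\MeshEl$, the correction sum drops out and the key step gives $\int_\FaceEl \Smt_\DGLabel(Q_F) = \int_\FaceEl \Smt(Q_F) = \int_\FaceEl \Avg{Q_F}$. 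For the second identity, I integrate \eqref{smoother-DG} over $\MeshEl \in \Mesh$; by $\int_{\MeshEl} S_{\MeshEl'} = \delta_{\MeshEl\MeshEl'}$ the sum collapses to its term indexed by $\MeshEl$, and $\int_\MeshEl \Smt_\DGLabel(Q_F) = \int_\MeshEl \Smt(Q_F) + \int_\MeshEl(Q_F - \Smt(Q_F)) = \int_\MeshEl Q_F$.

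I do not expect a genuine obstacle here: once the normalization and locality of the bubble functions are in place, the proof is just a telescoping of finite sums, and indeed the bubble corrections in \eqref{smoother} and \eqref{smoother-DG} were designed precisely so that these cancellations occur. The two points deserving a line of care are that the scaling constants in \eqref{bubble-functions} really produce the unit integrals asserted in \eqref{bubble-integral} -- a one-line application of the formula for the integral of a product of barycentric coordinates over a $k$-simplex -- and the bookkeeping in the $\CR$ case needed to replace $\Avg{V}$ by $V$ on the right-hand side of \eqref{moments-preserved-CR}, which is exactly where membership of $V$ in $\CR^\Dim$ enters.
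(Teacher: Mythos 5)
Your proof is correct and takes essentially the same route as the paper's: both rest on the normalization and locality of the bubble functions recorded in \eqref{bubble-integral}, from which the claimed moment conservation follows by short telescoping computations. The paper's own proof is terser, simply noting these facts and asserting the conclusion, whereas you spell out the reduction through the scalar key identity $\int_\FaceEl \Smt(S) = \int_\FaceEl \Avg{S}$ and the use of the Crouzeix--Raviart constraint to pass from $\Avg{V}$ to $V$; this is exactly the bookkeeping the paper alludes to when it remarks that $\int_\FaceEl V$ is well-defined despite $V$ being discontinuous.
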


\begin{proof}
The bubble function $S_\FaceEl$, $\FaceEl \in \FacesInt$, introduced in \eqref{bubble-functions} is supported on the union of the two simplices sharing $\FaceEl$ and it is continuous in $\Domain$. Furthermore, the scaling factor is chosen so that
\begin{subequations}
\label{bubble-integral}
\begin{equation}
\label{bubble-integral-face}
\int_{\FaceEl^\prime} S_{\FaceEl} = \delta_{\FaceEl \FaceEl^\prime}
\qquad
\forall \FaceEl^\prime \in \Faces.
\end{equation}
Analogously, the bubble function $S_\MeshEl$, $\MeshEl \in \Mesh$, is supported on $\MeshEl$ and it is continuous in $\Domain$. Hence it vanishes on the skeleton $\Skel$ of $\Mesh$. Moreover, it is scaled so that
\begin{equation}
\label{bubble-integral-simplex}
\int_{\MeshEl^\prime} S_{\MeshEl} = \delta_{\MeshEl \MeshEl^\prime}
\qquad
\forall \MeshEl^\prime \in \Mesh.
\end{equation}
\end{subequations}
These observations readily provide \eqref{moments-preserved}, in combination with the definitions of $\Smt_\CRLabel$ and $\Smt_\DGLabel$. Note, in particular, that the integral $\int_\FaceEl V$ is well-defined for all $V \in \CR^\Dim$ and $\FaceEl \in \FacesInt$, although $V$ is not globally continuous in $\Domain$, according to the definition \eqref{CR-space} of the space $\CR$.
\end{proof}

\subsection{Stability of the discretization}
\label{SS:stability-discretization}

Assessing the stability of the problem~\eqref{Biot-discrete} requires some technical preliminaries. Roughly speaking, we need a counterpart of each result invoked in section~\ref{SS:nonsymmetric-setting}. 

First of all, we extend $\NormLeb{\SymGrad(\cdot)}{\Domain}$ to a norm $\Norm{\cdot}_\CRLabel$ on $\SobH{\Domain}^\Dim + \CR^\Dim$ as follows
\begin{equation}
\label{extended-norm-CR}
\Norm{\widetilde{u} + \widetilde{U}}_\CRLabel :=
\left( \NormLeb{\SymGradM(\widetilde{u} + \widetilde{U})}{\Domain}^2 
+ \int_{\Skel} \MeshSize^{-1} |\Jump{\widetilde{U}}|^2 \right)^\frac{1}{2} 
\end{equation}
for all $\widetilde u \in \SobH{\Domain}^\Dim$ and $\widetilde{U} \in \CR^\Dim$. Then, the Korn's inequality
\begin{equation}
\label{Korn-piecewise}
\NormLeb{\GradM(\widetilde{u} + \widetilde{U})}{\Domain}
\lesssim
\Norm{\widetilde{u} + \widetilde{U}}_\CRLabel
\end{equation}
holds true, according to \cite[Theorem~3.1 and Remark~3.3]{Brenner:04}, and the hidden constant only depends on the shape parameter $\Shape{\Mesh}$ of $\Mesh$. 

Similarly, we extend $\NormLeb{\Grad \cdot}{\Domain}$ to a norm $\Norm{\cdot}_\DGLabel$ on $\SobH{\Domain} + \PolyPiec{1}$ as follows
\begin{equation}
\label{extended-norm-DG}
\Norm{\widetilde{p}_F + \widetilde{P}_F}_\DGLabel
:=
\left( \NormLeb{\GradM(\widetilde{p}_F + \widetilde{P}_F)}{\Domain}^2 + \int_{\Skel} \dfrac{\eta}{\MeshSize} |\Jump{\widetilde{P}_F}|^2 \right)^\frac{1}{2} 
\end{equation}
for all $\widetilde{p}_F \in \SobH{\Domain}$ and $\widetilde{P}_F \in \PolyPiec{1}$, where $\eta$ is the penalty parameter involved in the definition \eqref{form-DG} of $A_\DGLabel$. For sufficiently large $\eta$, the form $A_\DGLabel$ is inf-sup stable and bounded with respect to the norm $\Norm{\cdot}_\DGLabel$. More precisely 
\begin{equation}
\label{cont-infsup-DG}
\begin{minipage}{0.9\hsize}
there is $\overline{\eta} > 0$ such that $\forall \eta > \overline{\eta}, \;\widetilde{P}_F \in \PolyPiec{1} \quad
\displaystyle \sup_{Q_F \in \PolyPiec{1}} \dfrac{A_\DGLabel(\widetilde{P}_F, Q_F)}{\Norm{Q_F}_\DGLabel} \approx \Norm{\widetilde{P}_F}_\DGLabel$
\end{minipage}
\end{equation}
see \cite[Lemmas~4.12, 4.16 and 4.20]{DiPietro.Ern:12}.
Both $\overline{\eta}$ and the hidden constants only depend on $\Shape{\Mesh}$. 

The operator $\Smt$ introduced in \eqref{smoother} is bounded in the norm $\Norm{\cdot}_\DGLabel$. In fact, for all $S \in \PolyPiec{1}$ and $\MeshEl \in \Mesh$, we have the local estimate
\begin{equation}
\label{smoother-local-estimate}
\NormLeb{\Grad(S- \Smt(S))}{\MeshEl}
\lesssim
\sum_{\FaceEl \cap \MeshEl \neq \emptyset}
\left( \int_{\FaceEl} 
\MeshSize^{-1} |\Jump{S}|^2\right)^{\frac{1}{2}} 
\end{equation}
where $\FaceEl$ varies in $\Faces$ and the hidden constant only depends on $\Shape{\Mesh}$. This result is proved in \cite[Proposition~3.4 and Eqs. (3.17) and (3.18)]{Veeser.Zanotti:18b} in a slightly different setting (a variant of the averaging \eqref{averaging-operator} is considered) but it holds true also in this case. The boundedness of $\Smt$ implies that the operators $\Smt_\CRLabel$ and $\Smt_\DGLabel$ involved in problem~\eqref{Biot-discrete} are bounded as well.

\begin{proposition}[Boundedness of $\Smt_\CRLabel$ and $\Smt_\DGLabel$]
\label{P:smoother-bounded}
The operators $\Smt_\CRLabel$ and $\Smt_\DGLabel$ defined in \eqref{smoother-CR} and \eqref{smoother-DG}, respectively, are such that
\begin{subequations}
\label{smoother-bounded}
\begin{equation}
\label{smoother-CR-bounded}
\Norm{\Smt_\CRLabel(V)}_\CRLabel \lesssim \Norm{V}_\CRLabel
\end{equation}
as well as
\begin{equation}
\label{smoother-DG-bounded}
\Norm{\Smt_\DGLabel(Q_F)}_\DGLabel \lesssim \Norm{Q_F}_\DGLabel
\qquad \text{and} \qquad
\NormLeb{\Smt_\DGLabel(Q_F)}{\Domain} \lesssim \NormLeb{Q_F}{\Domain}
\end{equation}
\end{subequations}
for all $V \in \CR^\Dim$ and $Q_F \in \PolyPiec{1}$. Moreover, all the hidden constants only depend on the shape parameter $\Shape{\Mesh}$ of $\Mesh$.  
\end{proposition}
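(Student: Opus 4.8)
The plan is to deduce all three bounds from the single local estimate \eqref{smoother-local-estimate} for $\Smt$, together with the definitions \eqref{smoother-CR}, \eqref{smoother-DG} of $\Smt_\CRLabel$ and $\Smt_\DGLabel$ and standard inverse/trace inequalities on shape-regular simplices. The guiding principle is that $\Smt_\CRLabel$ and $\Smt_\DGLabel$ differ from $\Smt$ (applied componentwise) only by locally supported bubble corrections whose sizes are controlled by the jumps of the argument, which are in turn controlled by the relevant norms.

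First I would treat \eqref{smoother-CR-bounded}. Since $\Norm{\cdot}_\CRLabel$ is defined componentwise up to the jump term, it suffices to bound $\Norm{\Smt(V_i)}_\CRLabel$ for a scalar Crouzeix--Raviart function $V_i$; but $\Smt(V_i) \in \SobH{\Domain}$, so its jumps vanish and $\Norm{\Smt(V_i)}_\CRLabel = \NormLeb{\Grad \Smt(V_i)}{\Domain}$. Writing $\Grad \Smt(V_i) = \GradM V_i - \GradM(V_i - \Smt(V_i))$, summing \eqref{smoother-local-estimate} over $\MeshEl \in \Mesh$ (each face is counted a bounded number of times, depending only on $\Shape{\Mesh}$), and using that for $V_i \in \CR$ the face-jump term $\int_\Skel \MeshSize^{-1}|\Jump{V_i}|^2$ is part of $\Norm{V_i}_\CRLabel^2$, one gets $\NormLeb{\Grad \Smt(V_i)}{\Domain} \lesssim \Norm{V_i}_\CRLabel$. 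Squaring and summing over $i = 1, \dots, \Dim$ yields \eqref{smoother-CR-bounded}. A subtle point worth a line: for $V_i \in \CR$ the averaging $\mathcal{A}(V_i)$ only uses interior vertices, and the boundary faces carry $\Jump{V_i}$ equal to the one-sided trace, so one must check \eqref{smoother-local-estimate} (proved in \cite{Veeser.Zanotti:18b}) does account for boundary faces; this is exactly why the Crouzeix--Raviart condition \eqref{CR-space} is imposed on all faces, including boundary ones.

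Next, for the first inequality in \eqref{smoother-DG-bounded}, I would expand $\Smt_\DGLabel(Q_F) - \Smt(Q_F) = \sum_{\MeshEl} c_\MeshEl S_\MeshEl$ with $c_\MeshEl = \int_\MeshEl (Q_F - \Smt(Q_F))$. On each $\MeshEl$, Cauchy--Schwarz gives $|c_\MeshEl| \lesssim |\MeshEl|^{1/2}\NormLeb{Q_F - \Smt(Q_F)}{\MeshEl}$, and a scaling/inverse estimate for the fixed-degree bubble gives $\NormLeb{\Grad S_\MeshEl}{\MeshEl} \lesssim \MeshSize_\MeshEl^{-1}|\MeshEl|^{-1/2}$ (consistent with the normalization \eqref{bubble-integral-simplex}); hence $\NormLeb{\Grad(c_\MeshEl S_\MeshEl)}{\MeshEl} \lesssim \MeshSize_\MeshEl^{-1}\NormLeb{Q_F - \Smt(Q_F)}{\MeshEl}$. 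The Poincar\'e-type bound $\NormLeb{Q_F - \Smt(Q_F)}{\MeshEl} \lesssim \MeshSize_\MeshEl \NormLeb{\Grad(Q_F - \Smt(Q_F))}{\omega_\MeshEl}$ on the patch $\omega_\MeshEl$, together with \eqref{smoother-local-estimate}, absorbs this into $\sum_{\FaceEl}\int_\FaceEl \MeshSize^{-1}|\Jump{Q_F}|^2 \le \eta^{-1}\Norm{Q_F}_\DGLabel^2 \lesssim \Norm{Q_F}_\DGLabel^2$; since $S_\MeshEl$ is continuous its jumps vanish, so $\Norm{\Smt_\DGLabel(Q_F)}_\DGLabel = \NormLeb{\Grad \Smt_\DGLabel(Q_F)}{\Domain} \le \NormLeb{\Grad \Smt(Q_F)}{\Domain} + (\sum_\MeshEl \NormLeb{\Grad(c_\MeshEl S_\MeshEl)}{\MeshEl}^2)^{1/2}$, and the first term is $\lesssim \Norm{Q_F}_\DGLabel$ by the argument of the previous paragraph. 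For the $L^2$-bound in \eqref{smoother-DG-bounded} I would instead use $\NormLeb{S_\MeshEl}{\MeshEl} \lesssim |\MeshEl|^{-1/2}$, so $\NormLeb{c_\MeshEl S_\MeshEl}{\MeshEl} \lesssim \NormLeb{Q_F - \Smt(Q_F)}{\MeshEl}$, and combine with an $L^2$-stability estimate for $\Smt$ itself: $\NormLeb{\Smt(Q_F)}{\Domain} \lesssim \NormLeb{Q_F}{\Domain}$, which follows from $L^2$-stability of the averaging operator $\mathcal A$ (a standard local scaling argument) plus the analogous bubble estimate $\NormLeb{\sum_\FaceEl (\int_\FaceEl(\Avg{Q_F}-\mathcal A(Q_F)))S_\FaceEl}{\Domain}\lesssim \NormLeb{Q_F}{\Domain}$ using a trace inequality and $\NormLeb{S_\FaceEl}{\MeshEl}\lesssim \MeshSize_\FaceEl^{1/2}|\FaceEl|^{-1}|\MeshEl|^{1/2}\sim |\FaceEl|^{-1/2}$. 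Putting these together gives $\NormLeb{\Smt_\DGLabel(Q_F)}{\Domain}\lesssim \NormLeb{Q_F}{\Domain}$.

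I expect the main obstacle to be the $L^2$-stability claims, which are \emph{not} directly contained in \eqref{smoother-local-estimate} (that estimate is stated only for gradients). One must supply the $L^2$-stability of $\mathcal A$ and of the bubble corrections from scratch via local scaling and trace inequalities; the bookkeeping of how many faces/simplices meet a given element (a factor depending only on $\Shape{\Mesh}$) recurs throughout and should be stated once. Everything else is a routine assembly of inverse estimates, trace inequalities, and the already-cited properties of $\Smt$, so the proof can be kept short by citing \eqref{smoother-local-estimate} and \cite{Veeser.Zanotti:18b} for the gradient part and only spelling out the $L^2$ part.
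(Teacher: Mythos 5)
Your outline follows the right general plan, but there are two places where the argument as written does not close.

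First, in the Crouzeix--Raviart part you never use Korn's inequality. After summing \eqref{smoother-local-estimate} componentwise you control $\NormLeb{\Grad\Smt_\CRLabel(V)}{\Domain}$ by $\bigl(\NormLeb{\GradM V}{\Domain}^2 + \int_{\Skel}\MeshSize^{-1}|\Jump{V}|^2\bigr)^{1/2}$, with the \emph{full} broken gradient on the right; the target $\Norm{V}_\CRLabel$ involves only the \emph{symmetric} gradient plus jumps, and the passage from the former to the latter is exactly the broken Korn inequality \eqref{Korn-piecewise}. Your remark that ``the face-jump term $\int_\Skel \MeshSize^{-1}|\Jump{V_i}|^2$ is part of $\Norm{V_i}_\CRLabel^2$'' does not apply to scalar components, where $\Norm{\cdot}_\CRLabel$ is not defined; the vector norm carries $\SymGradM$, and dropping the Korn step would make the hidden constant appear parameter-independent for the wrong reason.

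Second, and more substantively, in the $A_\DGLabel$-part you invoke a ``Poincar\'e-type bound'' $\NormLeb{Q_F - \Smt(Q_F)}{\MeshEl} \lesssim \MeshSize_\MeshEl \NormLeb{\Grad(Q_F - \Smt(Q_F))}{\omega_\MeshEl}$ as if it were standard. It is not: $Q_F - \Smt(Q_F)$ has no reason to have zero mean on $\MeshEl$ or on the patch, so a Poincar\'e inequality of this form is false for a generic function and must be justified for this particular one. The missing ingredient is the moment-preservation of $\Smt$ on interior faces (the first identity in \eqref{moments-preserved-DG} gives $\int_\FaceEl(Q_F-\Smt(Q_F))_{|\MeshEl} = \pm\tfrac12\int_\FaceEl\Jump{Q_F}$, and the same computation with $\Jump{Q_F}$ on boundary faces), which is what makes a trace-corrected Poincar\'e (\cite[Lemma~B.63]{Ern.Guermond:04}) close the loop. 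The paper's route is precisely: apply the scaled Poincar\'e \emph{with} the boundary-integral terms, identify those as jump averages by moment-preservation, and then use \eqref{smoother-local-estimate}; your version omits the boundary-integral terms and hence skips the step where the structure of $\Smt$ enters. This is a genuine gap --- the claimed Poincar\'e on the patch is not a citation-level fact, and repairing it leads you back to essentially the same argument.

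On the $L^2$-bound your route is genuinely different from the paper's: you propose to establish $L^2$-stability of the averaging operator $\mathcal A$ and of the two bubble corrections separately, by local scaling and trace inequalities. The paper instead derives the $L^2$-bound as a by-product of a single \emph{joint} local estimate --- $\mathrm{diam}(\MeshEl)\NormLeb{\Grad(Q_F-\Smt_\DGLabel(Q_F))}{\MeshEl} + \NormLeb{Q_F-\Smt_\DGLabel(Q_F)}{\MeshEl} \lesssim \sum_{\FaceEl\cap\MeshEl\neq\emptyset}(\int_\FaceEl \MeshSize|\Jump{Q_F}|^2)^{1/2}$ --- summed over $\MeshEl$ and finished with the inverse trace inequality $\int_\Skel\MeshSize|\Jump{Q_F}|^2\lesssim\NormLeb{Q_F}{\Domain}^2$. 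Your route would work if carried out carefully, but it multiplies the bookkeeping; the joint local estimate gives both \eqref{smoother-DG-bounded} inequalities at once and is easier to keep uniform in $\Shape{\Mesh}$. In either case the unproved patch-Poincar\'e has to be replaced by the trace-corrected one and the moment identities.
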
  

\begin{proof}
Let $V \in \CR^\Dim$ and $\MeshEl \in \Mesh$ be given. The definition \eqref{smoother-CR} and the estimate \eqref{smoother-local-estimate} imply that
\begin{equation*}
\NormLeb{\Grad \Smt_\CRLabel(V)}{\MeshEl} 
\lesssim
\NormLeb{\Grad V}{\MeshEl} 
+ \sum_{\FaceEl \cap \MeshEl \neq \emptyset}
\left(\int_{\FaceEl} \MeshSize^{-1} |\Jump{V}|^2 \right)^{\frac{1}{2}}  
\end{equation*}	
where $\FaceEl$ varies in $\Faces$. Summing over all simplices $\MeshEl \in \Mesh$, we obtain
\begin{equation*}
\Norm{\Smt_\CRLabel(V)}_\CRLabel
\lesssim
\NormLeb{\Grad \Smt_\CRLabel(V)}{\Domain}
\lesssim\left( \NormLeb{\GradM V}{\Domain}^2 + 
\int_{\Skel} \MeshSize^{-1} |\Jump{V}|^2 \right)^{\frac{1}{2}}. 
\end{equation*}
Then, we derive the first claimed inequality \eqref{smoother-CR-bounded} by invoking the Korn's inequality \eqref{Korn-piecewise}. Next, let $Q_F \in \PolyPiec{1}$ and $\MeshEl \in \Mesh$ be given. The definition \eqref{smoother-DG} entails that
\begin{equation*}
(Q_F - \Smt_\DGLabel(Q_F))_{|\MeshEl} 
=
(Q_F - \Smt(Q_F))_{|\MeshEl}
+
\left( \int_\MeshEl (Q_F - \Smt(Q_F)) \right) S_\MeshEl 
\end{equation*}
because each bubble $S_{\MeshEl'}$ vanishes in $\MeshEl$ for $\MeshEl' \neq \MeshEl$. A standard scaling argument reveals that $\NormLeb{\Grad S_\MeshEl}{\MeshEl} \lesssim \mathrm{diam}(\MeshEl)^{-1} |\MeshEl|^{-\frac{1}{2}}$ and $\NormLeb{S_\MeshEl}{\MeshEl} \lesssim |\MeshEl|^{-\frac{1}{2}}$. Hence, we see that 
\begin{equation*}
\begin{split}
&\mathrm{diam}(\MeshEl) \NormLeb{\Grad(Q_F - \Smt_\DGLabel(Q_F))}{\MeshEl}
+
\NormLeb{Q_F - \Smt_\DGLabel(Q_F)}{\MeshEl}
\\
& \qquad \qquad \lesssim \mathrm{diam}(\MeshEl) \NormLeb{\Grad(Q_F - \Smt(Q_F))}{\MeshEl} + \NormLeb{Q_F - \Smt(Q_F)}{\MeshEl}\\
&\qquad \qquad \lesssim \mathrm{diam}(\MeshEl) \NormLeb{\Grad(Q_F - \Smt(Q_F))}{\MeshEl} + 
\dfrac{|\MeshEl|^\frac{1}{2}}{|\partial \MeshEl|}\sum_{\FaceEl \subseteq \partial \MeshEl}\left| \int_\FaceEl (Q_F - \Smt(Q_F))_{|\MeshEl} \right| 
\end{split}
\end{equation*}
where $\FaceEl$ varies in $\Faces$ and the second inequality follows from a scaled Poincar\'{e} inequality \cite[Lemma~B.63]{Ern.Guermond:04}. For an interior face $\FaceEl \in \FacesInt$, the first part of \eqref{moments-preserved-DG} entails that we have $|\int_{\FaceEl} (Q_F - \Smt(Q_F))_{|\MeshEl}| = |\int_{\FaceEl} (Q_F - \Avg{Q_F})_{|\MeshEl}| =  |\int_\FaceEl\Jump{Q_F}|/2$. Similarly, if $\FaceEl$ is a boundary face, i.e. $\FaceEl \in \Faces \setminus \FacesInt$, it holds that $\int_{\FaceEl} (Q_F - \Smt(Q_F))_{|\MeshEl} = \int_\FaceEl (Q_F)_{|\MeshEl} = \int_\FaceEl \Jump{Q_F}$. We insert these identities and \eqref{smoother-local-estimate} into the previous inequality. It follows that
\begin{equation*}
\mathrm{diam}(\MeshEl) \NormLeb{\Grad(Q_F - \Smt_\DGLabel(Q_F))}{\MeshEl}
+
\NormLeb{Q_F - \Smt_\DGLabel(Q_F)}{\MeshEl}
\lesssim 
\sum_{\FaceEl \cap \MeshEl \neq \emptyset}
\left(\int_{\FaceEl}  \MeshSize |\Jump{Q_F}|^2
\right)^{\frac{1}{2}}. 
\end{equation*} 
We derive that the first part of \eqref{smoother-DG-bounded} holds true dividing by $\mathrm{diam}(\MeshEl)$ and summing over all $\MeshEl \in \Mesh$. Regarding the second part of \eqref{smoother-DG-bounded}, we sum over all $\MeshEl \in \Mesh$ and then we observe that
\begin{equation*}
\sum_{\MeshEl \in \Mesh} \sum_{\FaceEl \cap \MeshEl \neq \emptyset} \int_{\FaceEl} \MeshSize |\Jump{Q_F}|^2
\lesssim
\int_{\Skel} \MeshSize |\Jump{Q_F}|^2
\lesssim
\NormLeb{Q_F}{\Domain}
\end{equation*}  
according to the inverse trace inequality \cite[Lemma~1.46]{DiPietro.Ern:12}.
\end{proof}

We are now in position to prove that the finite element spaces chosen for the discretization of the problem~\eqref{Biot-discrete} fulfill the conditions \eqref{guidelines}. In other words, we establish suitable counterparts of the equivalences stated in \eqref{cont-infsup-auxiliary}.

\begin{proposition}[Continuity and inf-sup stability of two auxiliary forms]
\label{P:cont-infsup-auxiliary-discrete}
For all functions $Q_0 \in \PolyPiecAvg{0}$ and $Q \in \PolyPiec{0}$, it holds that
\begin{equation}
\label{cont-infsup-auxiliary-discrete}
\sup_{V \in \CR^\Dim} 
\dfrac{\int_\Domain Q_0 \DivM(V)}{\Norm{V}_\CRLabel}
\approx 
\NormLeb{Q_0}{\Domain}
\qquad \text{and} \qquad
\sup_{Q_F \in \PolyPiec{1}}
\dfrac{\int_\Domain Q Q_F}{\Norm{Q_F}_\DGLabel} 
\approx 
\Norm{Q}_{\SobHD{\Domain}}
\end{equation}
and the hidden constants only depend on the shape parameter $\Shape{\Mesh}$ of $\Mesh$.
\end{proposition}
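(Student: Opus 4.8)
We prove the two equivalences in \eqref{cont-infsup-auxiliary-discrete} separately; in both of them the bound ``$\lesssim$'' is elementary, and the bound ``$\gtrsim$'' is the substantial one. For the first equivalence, ``$\lesssim$'' follows from $\DivM(V) = \mathrm{tr}(\SymGradM(V))$ and the Cauchy--Schwarz inequality, which give $\int_\Domain Q_0 \DivM(V) \le \sqrt{\Dim}\,\NormLeb{Q_0}{\Domain}\,\NormLeb{\SymGradM(V)}{\Domain} \le \sqrt{\Dim}\,\NormLeb{Q_0}{\Domain}\,\Norm{V}_\CRLabel$. For the second equivalence I would exploit the moment conservation of $\Smt_\DGLabel$: since $Q \in \PolyPiec{0}$, the second identity in \eqref{moments-preserved-DG} yields $\int_\Domain Q\,Q_F = \int_\Domain Q\,\Smt_\DGLabel(Q_F)$; as $\Smt_\DGLabel(Q_F) \in \SobH{\Domain}$, this is at most $\Norm{Q}_{\SobHD{\Domain}}\,\NormLeb{\Grad \Smt_\DGLabel(Q_F)}{\Domain} = \Norm{Q}_{\SobHD{\Domain}}\,\Norm{\Smt_\DGLabel(Q_F)}_\DGLabel$, and Proposition~\ref{P:smoother-bounded} bounds the last factor by $\Norm{Q_F}_\DGLabel$ up to a constant depending only on $\Shape{\Mesh}$.

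For the lower bound in the first equivalence I would use a Fortin-type argument. Let $\Interp_\CRLabel \colon \SobH{\Domain}^\Dim \to \CR^\Dim$ be the canonical Crouzeix--Raviart interpolation operator, characterised by $\int_\FaceEl \Interp_\CRLabel(v) = \int_\FaceEl v$ for every $\FaceEl \in \Faces$. An elementwise integration by parts shows that $\int_\Domain Q_0\,\DivM(\Interp_\CRLabel(v)) = \int_\Domain Q_0\,\Div(v)$ for all $Q_0 \in \PolyPiec{0}$, since $Q_0$ is constant on each simplex and $\int_{\partial\MeshEl}(\Interp_\CRLabel(v)-v)\cdot\Normal = 0$. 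Standard local interpolation and trace estimates, combined with the Korn inequality \eqref{Korn-piecewise} --- equivalently with the norm equivalence $\Norm{V}_\CRLabel \approx \NormLeb{\GradM(V)}{\Domain}$ on $\CR^\Dim$, whose nontrivial direction is \eqref{Korn-piecewise} and whose converse rests on the vanishing of the face averages of $\Jump{V}$ for $V \in \CR^\Dim$ --- together with Korn's first inequality on $\SobH{\Domain}^\Dim$, give $\Norm{\Interp_\CRLabel(v)}_\CRLabel \lesssim \NormLeb{\SymGrad(v)}{\Domain}$ with a constant depending only on $\Shape{\Mesh}$. Given $Q_0 \in \PolyPiecAvg{0}$, it then suffices to choose $v \in \SobH{\Domain}^\Dim$ that nearly realises the supremum in \eqref{cont-infsup-stokes} and to test with $V = \Interp_\CRLabel(v)$.

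For the lower bound in the second equivalence, fix an arbitrary $q_F \in \SobH{\Domain}$ and set $Q_F := \LebPro{\PolyPiec{0}}(q_F) \in \PolyPiec{0} \subseteq \PolyPiec{1}$. Since $Q \in \PolyPiec{0}$, one has $\int_\Domain Q\,Q_F = \int_\Domain Q\,q_F$. Moreover $\GradM Q_F = 0$, so that $\Norm{Q_F}_\DGLabel^2 = \int_{\Skel} \tfrac{\eta}{\MeshSize}\,|\Jump{Q_F}|^2$; writing $\Jump{Q_F} = \Jump{Q_F - q_F}$ on each face --- legitimate because $q_F$ is single-valued on $\Skel$ and vanishes on $\partial\Domain$ --- and applying a scaled trace inequality together with the Poincar\'e inequality on the adjacent simplices yields $\Norm{Q_F}_\DGLabel \lesssim \sqrt{\eta}\,\NormLeb{\Grad q_F}{\Domain}$. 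Hence
\begin{equation*}
\sup_{Q_F \in \PolyPiec{1}} \frac{\int_\Domain Q\,Q_F}{\Norm{Q_F}_\DGLabel} \gtrsim \frac{1}{\sqrt{\eta}}\,\frac{\int_\Domain Q\,q_F}{\NormLeb{\Grad q_F}{\Domain}},
\end{equation*}
and taking the supremum over $q_F \in \SobH{\Domain}$ and invoking \eqref{cont-infsup-riesz} concludes, since $\eta$ may be taken to depend only on $\Shape{\Mesh}$.

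I expect the Fortin step to be the main obstacle: one must verify that the Crouzeix--Raviart interpolant is bounded in the jump-penalised norm $\Norm{\cdot}_\CRLabel$, and not merely in the broken symmetric-gradient seminorm, i.e. control $\int_{\Skel} \MeshSize^{-1}\,|\Jump{\Interp_\CRLabel(v)}|^2$ by $\NormLeb{\Grad v}{\Domain}^2$. This follows from local approximation and trace estimates for $\Interp_\CRLabel(v) - v$, but it is the least routine part of the argument; alternatively one first establishes $\Norm{V}_\CRLabel \approx \NormLeb{\GradM(V)}{\Domain}$ on $\CR^\Dim$ and then quotes the classical Crouzeix--Raviart inf-sup stability of the Stokes element.
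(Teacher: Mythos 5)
Your proof is correct and, for the second equivalence, essentially mirrors the paper's argument: you use the moment conservation of $\Smt_\DGLabel$ together with Proposition~\ref{P:smoother-bounded} for the upper bound, and you bound $\Norm{\LebPro{\PolyPiec{0}}(q_F)}_\DGLabel \lesssim \NormLeb{\Grad q_F}{\Domain}$ as in \eqref{L2-projectio-H1-bounded} to transfer the continuous identity \eqref{cont-infsup-riesz} to the discrete level. For the first equivalence, the paper simply quotes the classical discrete inf-sup stability of the Crouzeix--Raviart/piecewise-constant Stokes pair (citing \cite[section~8.4.4]{Boffi.Brezzi.Fortin:13}) with $\NormLeb{\GradM\cdot}{\Domain}$ in the denominator, and then passes to $\Norm{\cdot}_\CRLabel$ via the norm equivalence on $\CR^\Dim$; your main route instead builds the Fortin operator $\Interp_\CRLabel$ from scratch and transfers the continuous Stokes inf-sup \eqref{cont-infsup-stokes}. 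These are the same idea at different levels of abstraction --- the classical result you could cite is proved precisely by such a Fortin construction --- and you note the shortcut yourself. The only point worth flagging is a minor one: the constant in Korn's first inequality on $\SobH{\Domain}^\Dim$ (and in the continuous Stokes inf-sup) depends on $\Domain$, so the "constant only depending on $\Shape{\Mesh}$" claim should be read as also depending on the fixed domain $\Domain$, exactly as the paper's own parenthetical remark concedes.
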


\begin{proof}
The discussion in \cite[section~8.4.4]{Boffi.Brezzi.Fortin:13} shows that $\CR^\Dim / \PolyPiecAvg{0}$ is a stable pair for the approximation of the Stokes equations. In fact, it holds that
\begin{equation*}
\sup_{V \in \CR^\Dim} 
\dfrac{\int_\Domain Q_0 \DivM(V)}{\NormLeb{\GradM V}{\Domain}}
\approx 
\NormLeb{Q_0}{\Domain}
\end{equation*}
for all $Q_0 \in \PolyPiecAvg{0}$. (Actually, the hidden constants only depend on $\Domain$, but this observation is not relevant here.) The first part of \eqref{cont-infsup-auxiliary-discrete} follows by combining this equivalence with the Korn's inequality \eqref{Korn-piecewise}. Next, recall the $L^2$-orthogonal projection $\LebPro{\PolyPiec{0}}$ onto $\PolyPiec{0}$. By definition, we see that $\LebPro{\PolyPiec{0}}$ is a Fortin operator for the bilinear form involved in the second part of \eqref{cont-infsup-auxiliary-discrete}. Moreover, the trace inequality \cite[Lemma~1.49]{DiPietro.Ern:12} and a scaled version of the Poincar\'{e} inequality \cite[Lemma~B.63]{Ern.Guermond:04} reveal that $\LebPro{\PolyPiec{0}}$ is bounded in the norm $\Norm{\cdot}_\DGLabel$
\begin{equation}
\label{L2-projectio-H1-bounded}
\Norm{\LebPro{\PolyPiec{0}}(q_F)}_\DGLabel
\lesssim
\NormLeb{\Grad q_F}{\Domain} + 
\left( \sum_{\MeshEl \in \Mesh} \mathrm{diam}(\MeshEl)^{-2} \NormLeb{q_F - \LebPro{\PolyPiec{0}}(q_F)}{\MeshEl} \right)^{\frac{1}{2}}
\lesssim
\NormLeb{\Grad q_F}{\Domain}
\end{equation}
for all $q_F \in \SobH{\Domain}$. Let $Q \in \PolyPiec{0}$. The identity \eqref{cont-infsup-riesz} and this estimate entail that
\begin{equation*}
\Norm{Q}_{\SobHD{\Domain}}
=
\sup_{q_F \in \SobH{\Domain}}
\dfrac{\int_\Domain Q \LebPro{\PolyPiec{0}}(q_F)}{\NormLeb{\Grad q_F}{\Domain}}
\lesssim
\sup_{Q_F \in \PolyPiec{0}}
\dfrac{\int_\Domain Q Q_F}{\Norm{Q_F}_\DGLabel}
\leq
\sup_{Q_F \in \PolyPiec{1}}
\dfrac{\int_\Domain Q Q_F}{\Norm{Q_F}_\DGLabel}.
\end{equation*}
The converse of this inequality follows from the fact that the operator $\Smt_\DGLabel$ introduced in \eqref{smoother-DG} is a bounded right inverse of $\LebPro{\PolyPiec{0}}$. Indeed, by recalling the second part of \eqref{moments-preserved-DG}, we see that
\begin{equation*}
\int_\Domain Q Q_F 
=
\int_\Domain Q \Smt_\DGLabel(Q_F)
\leq 
\Norm{Q}_{\SobHD{\Domain}}
\Norm{\Smt_\DGLabel(Q_F)}_\DGLabel
\end{equation*}
for all $Q_F \in \PolyPiec{1}$. Therefore, the first part of \eqref{smoother-DG-bounded} yields
\begin{equation*}
\dfrac{\int_\Domain QQ_F}{\Norm{Q_F}_\DGLabel}
\lesssim
\Norm{Q}_{\SobHD{\Domain}}.
\end{equation*}
We conclude by taking the supremum over all $Q_F \in \PolyPiec{1}$.
\end{proof}

\begin{remark}[Alternative equivalence]
\label{R:sharpened-continuity-infsup}
The equivalences stated in \eqref{cont-infsup-auxiliary-discrete} are motivated by our choice of the finite element spaces for the discretization of the problem \eqref{Biot-weak} and are tailored to our subsequent analysis. Still, the proof of Proposition~\ref{P:cont-infsup-auxiliary-discrete} reveals that we may replace the second part of \eqref{cont-infsup-auxiliary-discrete} by
\begin{equation*}
\forall Q \in \PolyPiec{0}
\qquad
\sup_{Q_F \in \PolyPiec{0}}
\dfrac{\int_\Domain QQ_F}{\Norm{Q_F}_\DGLabel}
\approx
\Norm{Q}_{\SobHD{\Domain}}
\end{equation*}	
where the hidden constants only depend on the shape parameter $\Shape{\Mesh}$ of $\Mesh$.
\end{remark}

The equivalences established in Proposition~\ref{P:cont-infsup-auxiliary-discrete} allow us to state the main result of this section, that is a counterpart of Theorem~\ref{T:cont-infsup} for the bilinear form $B$. For this purpose, we assume hereafter that
\begin{equation}
\label{penalty-parameter}
\begin{minipage}{0.75\hsize}
the penalty parameter $\eta$ in the definition \eqref{form-DG} of $A_\DGLabel$ fulfills the condition $\eta > \overline{\eta}$, where $\overline{\eta}$ is as in \eqref{cont-infsup-DG}.
\end{minipage}
\end{equation}
Furthermore, we replace the test norm $\Norm{\cdot}_2$ introduced in \eqref{norm-test} by
\begin{equation}
\label{norm-test-discrete}
\Normtr{(V, Q_F)}_2 := 
\left( \dfrac{\Norm{V}_{\CRLabel}^2}{\kappa} + 
\dfrac{\Norm{Q_F}_{\DGLabel}^2}{2 \mu} \right)^{\frac{1}{2}}   
\end{equation}
for all test functions $(V, Q_F) \in \CR^\Dim \times \PolyPiec{1}$.
 
\begin{theorem}[Continuity and inf-sup stability of $B$]
	\label{T:cont-infsup-discrete}
	Assume that \eqref{penalty-parameter} holds true and let the form $B$ be defined by \eqref{form-B}. For all $(\widetilde{U}, \widetilde{P}_F) \in \CR^\Dim \times \PolyPiec{1}$, we have
	\begin{equation}
	\label{cont-infsup-discrete}
	\sup_{(V, Q_F) \in \CR^\Dim \times \PolyPiec{1}}
	\dfrac{B( (\widetilde{U}, \widetilde{P}_F), (V, Q_F) )}{\Normtr{(V, Q_F)}_2} 
	\approx
	\Normtr{(\widetilde{U}, \widetilde{P}_F)}_1
	\end{equation}
	where the hidden constants only depend on the shape paramter $\Shape{\Mesh}$ of $\Mesh$, the norm $\Normtr{\cdot}_2$ is as in \eqref{norm-test-discrete} and the norm $\Normtr{\cdot}_1$ is given by
	\begin{equation}
	\label{norm-trial-discrete}
	\begin{alignedat}{2}
	\Normtr{(\widetilde{U}, \widetilde{P}_F)}_1 := 
	\Big (  &\mu^2\kappa \Norm{\widetilde U}_\CRLabel^2 
	+ \kappa \NormLeb{\lambda \DivM(\widetilde U) - \alpha \LebPro{\PolyPiecAvg{0}}(\widetilde P_F)}{\Domain}^2\\
	+ &\mu \kappa^2 \Norm{\widetilde P_F}_\DGLabel^2 
	+  \mu \Norm{\alpha \DivM(\widetilde U) + \sigma \LebPro{\PolyPiec{0}}(\widetilde P_F)}_{\SobHD{\Domain}}^2
	\Big )^{\frac{1}{2}}.  
	\end{alignedat}
	\end{equation} 
	Finally, $\LebPro{\PolyPiecAvg{0}}$ and $\LebPro{\PolyPiec{0}}$ denote the $L^2$-orthogonal projections onto $\PolyPiecAvg{0}$ and $\PolyPiec{0}$.
\end{theorem}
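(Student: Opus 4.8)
The plan is to follow the proof of Theorem~\ref{T:cont-infsup} almost verbatim, replacing the Riesz operators $\Riesz_\SymGrad$ and $\Riesz_\Grad$ by discrete analogues built from the auxiliary forms $A_\CRLabel$ and $A_\DGLabel$. By \eqref{form-CR} and \eqref{extended-norm-CR} the form $A_\CRLabel$ is symmetric with $A_\CRLabel(V,V)=\Norm{V}_\CRLabel^2$, hence an inner product on $\CR^\Dim$; so for each $Q_0\in\PolyPiecAvg0$ there is a unique $\Riesz_\CRLabel(Q_0)\in\CR^\Dim$ with $A_\CRLabel(\Riesz_\CRLabel(Q_0),V)=\int_\Domain Q_0\DivM V$ for all $V\in\CR^\Dim$, and the first equivalence in \eqref{cont-infsup-auxiliary-discrete} gives $\Norm{\Riesz_\CRLabel(Q_0)}_\CRLabel\approx\NormLeb{Q_0}{\Domain}$. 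Likewise, by \eqref{penalty-parameter} and \eqref{cont-infsup-DG} the symmetric form $A_\DGLabel$ is an isomorphism of $(\PolyPiec1,\Norm{\cdot}_\DGLabel)$ onto its dual, so for each $Q\in\PolyPiec0$ we may define $\Riesz_\DGLabel(Q)\in\PolyPiec1$ by $A_\DGLabel(\Riesz_\DGLabel(Q),Q_F)=\int_\Domain QQ_F$ for all $Q_F\in\PolyPiec1$; the second equivalence in \eqref{cont-infsup-auxiliary-discrete} then yields $\Norm{\Riesz_\DGLabel(Q)}_\DGLabel\approx\Norm{Q}_{\SobHD\Domain}$.

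Next I would rewrite $B$. Since every $V\in\CR^\Dim$ satisfies $\DivM V\in\PolyPiecAvg0$ (the face conditions in \eqref{CR-space} force $\int_\Domain\DivM V=0$), one has $\int_\Domain\widetilde P_F\DivM V=\int_\Domain\LebPro{\PolyPiecAvg0}(\widetilde P_F)\DivM V$; setting $P_T:=\lambda\DivM\widetilde U-\alpha\LebPro{\PolyPiecAvg0}(\widetilde P_F)\in\PolyPiecAvg0$ and $M:=\alpha\DivM\widetilde U+\sigma\LebPro{\PolyPiec0}(\widetilde P_F)\in\PolyPiec0$ as in \eqref{pT-m-discrete}, the definition \eqref{form-B} turns into
\[
B((\widetilde U,\widetilde P_F),(V,Q_F))
=A_\CRLabel(2\mu\widetilde U+\Riesz_\CRLabel(P_T),V)+A_\DGLabel(\kappa\widetilde P_F+\Riesz_\DGLabel(M),Q_F).
\]
Writing $W:=2\mu\widetilde U+\Riesz_\CRLabel(P_T)\in\CR^\Dim$ and $Z:=\kappa\widetilde P_F+\Riesz_\DGLabel(M)\in\PolyPiec1$, the exact identity $\sup_VA_\CRLabel(W,V)/\Norm{V}_\CRLabel=\Norm{W}_\CRLabel$, the equivalence $\sup_{Q_F}A_\DGLabel(Z,Q_F)/\Norm{Q_F}_\DGLabel\approx\Norm{Z}_\DGLabel$ from \eqref{cont-infsup-DG}, the definition \eqref{norm-test-discrete} of $\Normtr{\cdot}_2$, and a Cauchy--Schwarz optimization over the two scaling factors give
\[
\sup_{(V,Q_F)\in\CR^\Dim\times\PolyPiec1}
\frac{B((\widetilde U,\widetilde P_F),(V,Q_F))}{\Normtr{(V,Q_F)}_2}
\approx\bigl(\kappa\Norm{W}_\CRLabel^2+2\mu\Norm{Z}_\DGLabel^2\bigr)^{1/2}.
\]

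It then remains to show $\bigl(\kappa\Norm{W}_\CRLabel^2+2\mu\Norm{Z}_\DGLabel^2\bigr)^{1/2}\approx\Normtr{(\widetilde U,\widetilde P_F)}_1$. The upper bound is immediate from the triangle inequality together with $\Norm{\Riesz_\CRLabel(P_T)}_\CRLabel\lesssim\NormLeb{P_T}{\Domain}$ and $\Norm{\Riesz_\DGLabel(M)}_\DGLabel\lesssim\Norm{M}_{\SobHD\Domain}$. For the lower bound I would reproduce the cancellation at the end of the proof of Theorem~\ref{T:cont-infsup}: using $\Norm{\cdot}_\CRLabel^2=A_\CRLabel(\cdot,\cdot)$ and the norm equivalence $A_\DGLabel(Z,Z)\approx\Norm{Z}_\DGLabel^2$ from \eqref{cont-infsup-DG}, expand $\kappa\Norm{W}_\CRLabel^2$ and $2\mu A_\DGLabel(Z,Z)$; the defining relations of $\Riesz_\CRLabel$ and $\Riesz_\DGLabel$, together with $\DivM\widetilde U\in\PolyPiecAvg0$, convert the two cross terms into $4\mu\kappa\alpha\int_\Domain\widetilde P_F\DivM\widetilde U$ with opposite signs, plus the nonnegative remainders $4\mu\kappa\lambda\NormLeb{\DivM\widetilde U}{\Domain}^2$ and $4\mu\kappa\sigma\NormLeb{\LebPro{\PolyPiec0}(\widetilde P_F)}{\Domain}^2$ (precisely the extra terms of the discrete analogue of Remark~\ref{R:equivalent-trial-norm}). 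Discarding the remainders and cancelling the cross terms — which is exactly what the scaling of $\Normtr{\cdot}_2$ is tailored for, cf. Remark~\ref{R:alternative-test-norm} — leaves
\[
\kappa\Norm{W}_\CRLabel^2+2\mu A_\DGLabel(Z,Z)
\geq 4\mu^2\kappa\Norm{\widetilde U}_\CRLabel^2+\kappa\Norm{\Riesz_\CRLabel(P_T)}_\CRLabel^2+2\mu\kappa^2A_\DGLabel(\widetilde P_F,\widetilde P_F)+2\mu A_\DGLabel(\Riesz_\DGLabel(M),\Riesz_\DGLabel(M)),
\]
and the equivalences $\Norm{\Riesz_\CRLabel(P_T)}_\CRLabel\approx\NormLeb{P_T}{\Domain}$, $A_\DGLabel(\widetilde P_F,\widetilde P_F)\approx\Norm{\widetilde P_F}_\DGLabel^2$ and $A_\DGLabel(\Riesz_\DGLabel(M),\Riesz_\DGLabel(M))\approx\Norm{\Riesz_\DGLabel(M)}_\DGLabel^2\approx\Norm{M}_{\SobHD\Domain}^2$ recover the four terms of $\Normtr{(\widetilde U,\widetilde P_F)}_1^2$ in \eqref{norm-trial-discrete}.

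The step I expect to be most delicate is this last block. Unlike in Theorem~\ref{T:cont-infsup}, where $\NormLeb{\Grad\cdot}{\Domain}$ is literally the norm of the inner product defining $\Riesz_\Grad$, here the bilinear form $A_\DGLabel$ used in $B$ differs from the norm $\Norm{\cdot}_\DGLabel$ used in $\Normtr{\cdot}_1$ and $\Normtr{\cdot}_2$; consequently the expansion and cancellation must be carried out at the level of $A_\DGLabel$ and only afterwards transferred to $\Norm{\cdot}_\DGLabel$ via the two-sided bound in \eqref{cont-infsup-DG}. Tracking the shape-parameter-dependent constants introduced by this detour, and the several $L^2$-projection reductions (which hinge on $\DivM V\in\PolyPiecAvg0$ and on the piecewise-constant structure of $\DivM V$), is the main bookkeeping burden, but no idea beyond those already present in Theorem~\ref{T:cont-infsup} and Proposition~\ref{P:cont-infsup-auxiliary-discrete} is needed.
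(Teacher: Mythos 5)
Your proof mirrors the paper's argument step for step: introduce the discrete Riesz operators $\Riesz_\CRLabel$, $\Riesz_\DGLabel$ via $A_\CRLabel$ and $A_\DGLabel$, rewrite $B((\widetilde U,\widetilde P_F),(V,Q_F))=A_\CRLabel(W,V)+A_\DGLabel(Z,Q_F)$, reduce the supremum to $(\kappa\Norm{W}_\CRLabel^2+2\mu\Norm{Z}_\DGLabel^2)^{1/2}$, and then expand and exploit the cancellation of the cross term $4\mu\kappa\alpha\int_\Domain\widetilde P_F\DivM\widetilde U$ exactly as in Theorem~\ref{T:cont-infsup}. You are right, and in fact slightly more careful than the paper's own write-up, to flag that the $A_\DGLabel$-step only gives an equivalence (not an identity) since $A_\DGLabel(\cdot,\cdot)\neq\Norm{\cdot}_\DGLabel^2$; the paper states it as an equality, which strictly speaking should be $\approx$ via \eqref{cont-infsup-DG}.
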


\begin{proof}
The proof of the upper bound `$\lesssim$' in \eqref{cont-infsup-discrete} is immediate, whereas the proof of the lower bound `$\gtrsim$' is similar to the corresponding one in Theorem~\ref{T:cont-infsup}, so we only outline the argument. We introduce the operators $\Riesz_\CRLabel: \PolyPiecAvg{0} \to \CR^\Dim$ and $\Riesz_\DGLabel: \PolyPiec{0} \to \PolyPiec{1}$ through the problems
\begin{equation*}
\begin{alignedat}{2}
& \forall V \in \CR^\Dim\qquad
A_\CRLabel(\Riesz_\CRLabel(Q_0), V) 
&=& \int_\Domain Q_0 \DivM(V)\\
& \forall Q_F \in \PolyPiec{1} \qquad
A_\DGLabel(\Riesz_\DGLabel(Q), Q_F)
&=& \int_\Domain Q Q_F 
\end{alignedat}
\end{equation*}
for all $Q_0 \in \PolyPiecAvg{0}$ and $Q \in \PolyPiec{0}$. The equivalences in Proposition~\ref{P:cont-infsup-auxiliary-discrete} ensure that
\begin{equation}
\label{Rdiv-Riesz-norm-discrete}
\Norm{\Riesz_\CRLabel(Q_0)}_\CRLabel 
\approx
\NormLeb{Q_0}{\Domain}
\qquad \text{and} \qquad
\Norm{\Riesz_\DGLabel(Q)}_\DGLabel
\approx
\Norm{Q}_{\SobHD{\Domain}}.
\end{equation}
For $(\widetilde U, \widetilde{P}_F), (V, Q_F) \in \CR^\Dim \times \PolyPiec{1}$, we rewrite the form $B$ as  
\begin{equation*}
\begin{alignedat}{1}
B( (\widetilde U, \widetilde{P}_F), (V, Q_F) ) =
& A_\CRLabel( 2 \mu \widetilde U +
\Riesz_\CRLabel(\lambda \DivM(\widetilde U) - \alpha \LebPro{\PolyPiecAvg{0}}(\widetilde{P}_F)),  V)\\
+&  A_\DGLabel( \Riesz_\DGLabel(\alpha \DivM(\widetilde{U}) + \sigma \LebPro{\PolyPiec{0}}(\widetilde{P}_F) ) + \kappa \widetilde{P}_F, Q_F).
\end{alignedat}
\end{equation*}
The definition of the norm $\Normtr{\cdot}_2$ entails that
\begin{equation*}
\begin{split}
&\sup_{(V, Q_F) \in \CR^\Dim \times \PolyPiec{1}}
\dfrac{B( (\widetilde{U}, \widetilde{P}_F), (V, Q_F) )}{\Normtr{(V, Q_F)}_2} 
=
\Big( \kappa \Norm{  2\mu \widetilde{U} + \Riesz_\CRLabel(\lambda \DivM(\widetilde U) - \alpha \LebPro{\PolyPiecAvg{0}}(\widetilde{P}_F)) }_\CRLabel^2 \\
&\hspace{140pt}+ 
2 \mu \Norm{ \Riesz_\DGLabel(\alpha \DivM(\widetilde{U}) + \sigma \LebPro{\PolyPiec{0}}(\widetilde{P}_F)) + \kappa \widetilde P_F }_\DGLabel^2 
\Big)^{\frac{1}{2}} .
\end{split}
\end{equation*}
Straight-forward computations, the definition of the operators $\Riesz_\CRLabel$ and $\Riesz_\DGLabel$ and the equivalences in \eqref{Rdiv-Riesz-norm-discrete} confirm that the right-hand side is bounded from below by $\Normtr{(\widetilde{U}, \widetilde{P}_F)}_1$. 
\end{proof}

Theorem~\ref{T:cont-infsup-discrete} yields the following result concerning the stability of the problem~\eqref{Biot-discrete}.

\begin{corollary}[Discrete stability]
\label{C:stability-discrete} 
Assume that \eqref{penalty-parameter} holds true and let the form $B$ be defined by \eqref{form-B}. For all load terms $(f, g) \in \SobHD{\Domain; \Dim} \times\SobHD{\Domain}$, the problem~\eqref{Biot-discrete} is uniquely solvable and its solution $(U, P_F) \in \CR^\Dim \times \PolyPiec{1}$ is such that
\begin{equation}
\label{stability-discrete}
\Normtr{(U, P_F)}_1
\approx
\sup_{(V, Q_F) \in \CR^\Dim \times \PolyPiec{1}}
\dfrac{\left\langle f, \Smt_\CRLabel (V) \right\rangle + \left\langle g, \Smt_\DGLabel (Q_F)\right\rangle }{\Normtr{(V, Q_F)}_2}
\lesssim
\Norm{(f, g)}_{2, \star}
\end{equation}
where the hidden constants only depend on the shape parameter $\Shape{\Mesh}$ of $\Mesh$ and the norms $\Normtr{\cdot}_1$, $\Normtr{\cdot}_2$ and $\Norm{\cdot}_{2, \star}$ are as in Theorem~\ref{T:cont-infsup-discrete}, \eqref{norm-test-discrete} and \eqref{norm-test-dual}, respectively.
\end{corollary}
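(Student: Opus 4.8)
The plan is to derive Corollary~\ref{C:stability-discrete} from Theorem~\ref{T:cont-infsup-discrete} in close analogy with the way Corollary~\ref{C:stability} is obtained from Theorem~\ref{T:cont-infsup}, the only new ingredient being the control of the discretized right-hand side by $\Norm{\cdot}_{2,\star}$. First I would invoke the Banach--Ne\v{c}as theorem (\cite[Theorem~2.6]{Ern.Guermond:04}) on the finite-dimensional spaces $\CR^\Dim \times \PolyPiec{1}$: since trial and test space have the same (finite) dimension, it suffices to check the inf-sup condition, which is exactly the lower bound `$\gtrsim$' in \eqref{cont-infsup-discrete}. (Alternatively, one notes that $B((V,Q_F),(V,Q_F)) = 2\mu A_\CRLabel(V,V) + \NormLeb{\lambda \DivM(V) - \alpha \LebPro{\PolyPiecAvg{0}}(V) \ \text{terms}}{} \ldots$; more simply $B((V,Q_F),(V,Q_F)) = 2\mu A_\CRLabel(V,V) + \lambda\NormLeb{\DivM(V)}{\Domain}^2 + \sigma\NormLeb{\LebPro{\PolyPiec{0}}(Q_F)}{\Domain}^2 + \kappa A_\DGLabel(Q_F,Q_F) > 0$ for $(V,Q_F)\neq(0,0)$ under \eqref{penalty-parameter}, using the positive-definiteness of $A_\CRLabel$ and $A_\DGLabel$ on the respective spaces, which gives injectivity and hence, by equal dimensions, bijectivity directly.) This establishes unique solvability of \eqref{Biot-discrete}.

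Next I would establish the first equivalence `$\approx$' in \eqref{stability-discrete}. Plugging the solution $(U,P_F)$ into \eqref{Biot-discrete} gives, for every $(V,Q_F)$,
\begin{equation*}
\dfrac{B((U,P_F),(V,Q_F))}{\Normtr{(V,Q_F)}_2}
=
\dfrac{\left\langle f, \Smt_\CRLabel(V)\right\rangle + \left\langle g, \Smt_\DGLabel(Q_F)\right\rangle}{\Normtr{(V,Q_F)}_2},
\end{equation*}
so taking the supremum over $(V,Q_F)\in\CR^\Dim\times\PolyPiec{1}$ and applying \eqref{cont-infsup-discrete} immediately yields the claimed two-sided bound between $\Normtr{(U,P_F)}_1$ and the middle quantity in \eqref{stability-discrete}.

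The remaining, and genuinely new, step is the final bound `$\lesssim$' in \eqref{stability-discrete}, i.e. controlling $\sup_{(V,Q_F)}(\left\langle f,\Smt_\CRLabel(V)\right\rangle + \left\langle g,\Smt_\DGLabel(Q_F)\right\rangle)/\Normtr{(V,Q_F)}_2$ by $\Norm{(f,g)}_{2,\star}$. Here I would split the numerator into its two summands, estimate each duality by the $H^{-1}$-norm of the load times the $H^1$-seminorm of the smoothed test function --- $\left\langle f,\Smt_\CRLabel(V)\right\rangle \le \Norm{f}_{\SobHD{\Domain;\Dim}}\NormLeb{\Grad\Smt_\CRLabel(V)}{\Domain}$ and likewise for $g$ --- and then use Proposition~\ref{P:smoother-bounded} to pass from $\NormLeb{\Grad\Smt_\CRLabel(V)}{\Domain}\lesssim\Norm{\Smt_\CRLabel(V)}_\CRLabel\lesssim\Norm{V}_\CRLabel$ and $\NormLeb{\Grad\Smt_\DGLabel(Q_F)}{\Domain}\le\Norm{\Smt_\DGLabel(Q_F)}_\DGLabel\lesssim\Norm{Q_F}_\DGLabel$. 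Rescaling by the factors $\sqrt{\kappa}$ and $\sqrt{2\mu}$ built into the definitions \eqref{norm-test-discrete} of $\Normtr{\cdot}_2$ and \eqref{norm-test-dual} of $\Norm{\cdot}_{2,\star}$, and applying the Cauchy--Schwarz inequality in $\R^2$ to combine the two contributions, gives $\left\langle f,\Smt_\CRLabel(V)\right\rangle + \left\langle g,\Smt_\DGLabel(Q_F)\right\rangle \lesssim \Norm{(f,g)}_{2,\star}\,\Normtr{(V,Q_F)}_2$, which is exactly what is needed after dividing by $\Normtr{(V,Q_F)}_2$ and taking the supremum.

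The main obstacle is really the bookkeeping of the parameter weights in this last step --- making sure the $\sqrt{\kappa}$ and $\sqrt{2\mu}$ scalings distribute correctly so that the constants coming out of Proposition~\ref{P:smoother-bounded} depend only on $\Shape{\Mesh}$ and absolutely not on $\kappa,\mu,\lambda,\sigma,\alpha$; everything else is a direct transcription of the continuous argument. One should also note in passing that unique solvability guarantees the equalities in \eqref{stability-discrete} are between finite quantities, and that, unlike in Corollary~\ref{C:stability}, we cannot expect the reverse inequality $\Norm{(f,g)}_{2,\star}\lesssim\Normtr{(U,P_F)}_1$ in general, since $\Smt_\CRLabel$ and $\Smt_\DGLabel$ need not be surjective onto $\SobH{\Domain}^\Dim$ and $\SobH{\Domain}$ --- consistent with the one-sided `$\lesssim$' in the statement.
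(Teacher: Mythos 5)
Your proposal is correct and follows essentially the same path as the paper's (very terse) proof: proceed as in Corollary~\ref{C:stability} — unique solvability from the diagonal positivity of $B$ together with Theorem~\ref{T:cont-infsup-discrete}, then read off the first equivalence in \eqref{stability-discrete} from \eqref{cont-infsup-discrete} — and obtain the final bound $\lesssim\Norm{(f,g)}_{2,\star}$ from the boundedness of $\Smt_\CRLabel$ and $\Smt_\DGLabel$ in Proposition~\ref{P:smoother-bounded}. The only nitpick is that in bounding $\langle f,\Smt_\CRLabel(V)\rangle$ the dual pairing naturally pairs with $\NormLeb{\SymGrad(\Smt_\CRLabel(V))}{\Domain}=\Norm{\Smt_\CRLabel(V)}_\CRLabel$ directly (given the symmetric-gradient form of $\Norm{\cdot}_2$ in \eqref{norm-test}), so the intermediate detour through $\NormLeb{\Grad\Smt_\CRLabel(V)}{\Domain}$ is unnecessary, but this does not affect correctness.
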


\begin{proof}
We proceed as in the proof of Corollary~\ref{C:stability} and we additionally derive the second part of \eqref{stability-discrete} by Proposition~\ref{P:smoother-bounded}.
\end{proof}

\begin{remark}[Spurious pressure oscillations]
\label{R:spurious-pressure-modes}
When the parameter $\kappa$ is `small', the norm $\Normtr{\cdot}_1$ controls only the $L^2$-orthogonal projection of the discrete fluid pressure $P_F$ onto the piecewise constant functions (and not $P_F$ itself). Therefore, it must be expected that the components of $P_F$ in the $L^2$-orthogonal complement of the piecewise constants are possibly affected by spurious oscillations. This follows from the fact that the space chosen for approximating the total pressure, i.e. $\PolyPiec{0}$, is smaller than the space used for approximating the fluid pressure, i.e. $\PolyPiec{1}$ and from the observation that the pair $\CR^\Dim /\PolyPiec{1}$ does not enjoy a counterpart of the equivalence \eqref{cont-infsup-stokes}. This shows, incidentally, that the condition \eqref{guidelines-Stokes} guarantees that only some projection of the approximate fluid pressure is free from spurious oscillations.	
\end{remark}

\subsection{Error analysis}
\label{SS:error-analysis}

Assessing the quality of the discretization in \eqref{Biot-discrete} requires an error notion on the sum of the spaces $\SobH{\Domain}^\Dim \times \SobH{\Domain}$ and $\CR^\Dim \times \PolyPiec{1}$. The expression of the norms $\Norm{\cdot}_1$ and $\Normtr{\cdot}_1$ in Theorems~\ref{T:cont-infsup} and \ref{T:cont-infsup-discrete}, respectively, suggests to proceed as follows. First, in order to alleviate the notation, we define
\begin{equation}
\label{auxiliary-variables}
\begin{alignedat}{1}
\widetilde{p}_T := \lambda \Div(\widetilde u) - 
\alpha \LebPro{\LebH{\Domain}}(\widetilde{p}_F)
\qquad & \text{and}  \qquad
\widetilde{m} := \alpha \Div(\widetilde u) +
\sigma \widetilde p_F\\
\widetilde{P}_T := \lambda \DivM(\widetilde{U}) -
\alpha \LebPro{\PolyPiecAvg{0}}(\widetilde P_F)
\qquad & \text{and}  \qquad
\widetilde{M} = \alpha \DivM(\widetilde{U}) +
\sigma \LebPro{\PolyPiec{0}} (\widetilde{P}_F) 
\end{alignedat}
\end{equation}
for all $(\widetilde{u}, \widetilde{p}_F) \in \SobH{\Domain}^\Dim \times \SobH{\Domain}$ and $(\widetilde{U}, \widetilde{P}_F) \in \CR^\Dim \times \PolyPiec{1}$. Then, we set
\begin{equation}
\label{error-notion}
\begin{split}
\Err{(\widetilde{u}, \widetilde{p}_F), (\widetilde{U}, \widetilde P_F)}
:= 
\Big (  \mu^2\kappa &\Norm{\widetilde{u} - \widetilde U}_\CRLabel^2 
+ \kappa \NormLeb{\widetilde p_T - \widetilde{P}_T}{\Domain}^2\\
+  \mu &\Norm{\widetilde{m} - \widetilde{M}}_{\SobHD{\Domain}}^2
+ \mu \kappa^2 \Norm{\widetilde{p}_F - \widetilde P_F}_\DGLabel^2 
\Big)^{\frac{1}{2}}.
\end{split}
\end{equation}
Notice that we cannot just define the error through a norm that extends both $\Norm{\cdot}_1$ and $\Normtr{\cdot}_1$, because the two norms act differently on the intersection of the respective spaces. This is ultimately due to the use of a reduced integration technique in the definition of the problem \eqref{Biot-discrete}, cf. Remark~\ref{R:reduced-integration}. 

In addition to the stability observed in the previous section, the error analysis requires also some consistency, i.e. some compatibility between the form $B$ on the left-hand side of the problem~\eqref{Biot-discrete} and the operators $\Smt_\CRLabel$ and $\Smt_\DGLabel$ on the right-hand side. The property of $\Smt_\CRLabel$ and $\Smt_\DGLabel$ ensuring that we indeed have the necessary consistency is the conservation of the moments stated in Lemma~\ref{L:moments-preserved}. 

\begin{proposition}[Consistency]
\label{P:consistency-error}
Let the load term $(f, g) \in \SobHD{\Domain; \Dim} \times \SobHD{\Domain}$ be given and denote by $(u, p_F) \in \SobH{\Domain}^\Dim \times \SobH{\Domain}$ and $(U, P_F) \in \CR^\Dim \times \PolyPiec{1}$ the corresponding solutions of the problems \eqref{Biot-weak} and \eqref{Biot-discrete}, respectively. Then, we have
\begin{equation*}
\label{consistency-error}
\begin{split}
\sup_{(V, Q_F) \in \CR^\Dim \times \PolyPiec{1}}
\dfrac{B((U- \widetilde{U}, P_F - \widetilde{P}_F), (V, Q_F))}{\Normtr{(V,Q_F)}_2}
\lesssim
\Err{(u,p_F), (\widetilde{U}, \widetilde P_F)} 
\end{split}
\end{equation*}
for all $(\widetilde{U}, \widetilde{P}_F) \in \CR^\Dim \times \PolyPiec{1}$, where $B$, $\ERR$ and $\Normtr{\cdot}_2$ are as in \eqref{form-B}, \eqref{error-notion} and \eqref{norm-test-discrete} and the hidden constant only depends on the shape parameter $\Shape{\Mesh}$ of $\Mesh$.
\end{proposition}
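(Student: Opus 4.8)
The plan is to run the standard consistency argument: combine the discrete equation~\eqref{Biot-discrete} with the continuous equation~\eqref{Biot-weak}, then estimate the resulting residual termwise, with the moment conservation of Lemma~\ref{L:moments-preserved} doing the essential work. First I would use the linearity of $B$ in its first argument and the discrete problem to write $B((U - \widetilde{U}, P_F - \widetilde{P}_F), (V, Q_F)) = \left\langle f, \Smt_\CRLabel(V)\right\rangle + \left\langle g, \Smt_\DGLabel(Q_F)\right\rangle - B((\widetilde{U}, \widetilde{P}_F), (V, Q_F))$; since $\Smt_\CRLabel(V) \in \SobH{\Domain}^\Dim$ and $\Smt_\DGLabel(Q_F) \in \SobH{\Domain}$ are admissible test functions in~\eqref{Biot-weak}, the loads can be replaced by $b((u, p_F), (\Smt_\CRLabel(V), \Smt_\DGLabel(Q_F)))$, so the quantity to estimate is
\[
b((u, p_F), (\Smt_\CRLabel(V), \Smt_\DGLabel(Q_F))) - B((\widetilde{U}, \widetilde{P}_F), (V, Q_F)).
\]
I would split this into the four differences obtained by matching, in order, the four terms of $b$ in~\eqref{form-b} with those of $B$ in~\eqref{form-B}, and bound each by one of the four summands of $\Err{(u, p_F), (\widetilde{U}, \widetilde{P}_F)}$, using at the end $\Norm{V}_\CRLabel \le \sqrt{\kappa}\,\Normtr{(V, Q_F)}_2$ and $\Norm{Q_F}_\DGLabel \le \sqrt{2\mu}\,\Normtr{(V, Q_F)}_2$.

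For the two \emph{algebraic} blocks I would use that $\int_\Domain \Div(\Smt_\CRLabel(V)) = 0 = \int_\Domain \DivM(V)$ (Gauss' theorem and vanishing traces), which removes the constant $p_F - \LebPro{\LebH{\Domain}}(p_F)$ and lets me exchange $\LebPro{\PolyPiec{0}}$ for $\LebPro{\PolyPiecAvg{0}}$, together with $\int_\MeshEl \Div(\Smt_\CRLabel(V)) = \int_\MeshEl \DivM(V)$ from~\eqref{moments-preserved-CR} and $\int_\MeshEl \Smt_\DGLabel(Q_F) = \int_\MeshEl Q_F$ from~\eqref{moments-preserved-DG}. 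Since $\widetilde{P}_T, \widetilde{M} \in \PolyPiec{0}$, the second and third blocks then collapse to $\int_\Domain (p_T - \widetilde{P}_T)\Div(\Smt_\CRLabel(V))$ and $\int_\Domain (m - \widetilde{M})\Smt_\DGLabel(Q_F)$, which the Cauchy--Schwarz inequality, the definition of the $\SobHD{\Domain}$-norm, the bound $\NormLeb{\Div(\Smt_\CRLabel(V))}{\Domain} \lesssim \NormLeb{\SymGrad(\Smt_\CRLabel(V))}{\Domain}$ and Proposition~\ref{P:smoother-bounded} estimate by $\NormLeb{p_T - \widetilde{P}_T}{\Domain}\Norm{V}_\CRLabel$ and $\Norm{m - \widetilde{M}}_{\SobHD{\Domain}}\Norm{Q_F}_\DGLabel$, respectively.

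For the two \emph{differential} blocks I would first separate the conforming error, writing $\int_\Domain \SymGrad(u)\colon\SymGrad(\Smt_\CRLabel(V)) = \int_\Domain \SymGradM(u - \widetilde{U})\colon\SymGrad(\Smt_\CRLabel(V)) + \int_\Domain \SymGradM(\widetilde{U})\colon\SymGrad(\Smt_\CRLabel(V))$ and, since $\SymGradM(\widetilde{U})$ is piecewise constant, using element-wise Gauss' theorem with $\int_\FaceEl \Smt_\CRLabel(V) = \int_\FaceEl V$ to identify the second integral with $\int_\Domain \SymGradM(\widetilde{U})\colon\SymGradM(V)$; after subtracting $A_\CRLabel(\widetilde{U}, V)$ only the jump term $-\int_\Skel \MeshSize^{-1}\Jump{\widetilde{U}}\cdot\Jump{V}$ survives, which is harmless because $\Jump{\widetilde{U}} = \Jump{u - \widetilde{U}}$. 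Analogously, in the Darcy block I would split off $\int_\Domain \GradM(p_F - \widetilde{P}_F)\cdot\Grad(\Smt_\DGLabel(Q_F))$, integrate $\int_\Domain \GradM(\widetilde{P}_F)\cdot\Grad(\Smt_\DGLabel(Q_F))$ by parts element by element (here $\widetilde{P}_F$ is harmonic on each simplex) and invoke $\int_\FaceEl \Smt_\DGLabel(Q_F) = \int_\FaceEl \Avg{Q_F}$ to see that it equals the first two terms of $A_\DGLabel(\widetilde{P}_F, Q_F)$; what remains after subtracting $\kappa A_\DGLabel(\widetilde{P}_F, Q_F)$ is $\kappa\int_\Domain \GradM(p_F - \widetilde{P}_F)\cdot\Grad(\Smt_\DGLabel(Q_F))$ together with the two skeleton terms carrying $\Jump{\widetilde{P}_F}$, which I would rewrite via $\Jump{\widetilde{P}_F} = -\Jump{p_F - \widetilde{P}_F}$ and bound by the Cauchy--Schwarz inequality, a discrete trace inequality for $\int_\Skel \MeshSize|\Avg{\Grad Q_F}|^2$ and Proposition~\ref{P:smoother-bounded}; this gives $\mu\Norm{u - \widetilde{U}}_\CRLabel\Norm{V}_\CRLabel$ and $\kappa\Norm{p_F - \widetilde{P}_F}_\DGLabel\Norm{Q_F}_\DGLabel$ for the two differential blocks, with the constants absorbing the fixed $\eta$.

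Collecting the four bounds, dividing by $\Normtr{(V, Q_F)}_2$ and taking the supremum would yield a bound by a constant times $\sqrt{\mu^2\kappa}\,\Norm{u - \widetilde{U}}_\CRLabel + \sqrt{\kappa}\,\NormLeb{p_T - \widetilde{P}_T}{\Domain} + \sqrt{\mu}\,\Norm{m - \widetilde{M}}_{\SobHD{\Domain}} + \sqrt{\mu\kappa^2}\,\Norm{p_F - \widetilde{P}_F}_\DGLabel$, which is equivalent to $\Err{(u, p_F), (\widetilde{U}, \widetilde{P}_F)}$. I expect the main obstacle to be the algebraic bookkeeping in the differential blocks: one must check that, after exploiting the moment conservation of Lemma~\ref{L:moments-preserved}, every term that survives is expressed through a \emph{difference} between a continuous and a discrete quantity (hence through the error) rather than through $\widetilde{U}$ or $\widetilde{P}_F$ alone, which relies on $\int_\Domain \Div(\Smt_\CRLabel(V)) = 0$, $\Jump{\widetilde{U}} = \Jump{u - \widetilde{U}}$ and $\Jump{\widetilde{P}_F} = -\Jump{p_F - \widetilde{P}_F}$, and on the cancellation, in the Darcy block, of the consistency term of the SIP form against the element-wise integration by parts.
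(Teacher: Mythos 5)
Your argument reproduces the paper's proof: you replace the loads by $b\bigl((u,p_F),(\Smt_\CRLabel(V),\Smt_\DGLabel(Q_F))\bigr)$ via the conformity of the smoothers, use the moment conservation of Lemma~\ref{L:moments-preserved} together with element-wise integration by parts to identify the smoothed and unsmoothed volume terms (leaving only the skeleton contributions carrying $\Jump{\widetilde U}$ and $\Jump{\widetilde P_F}$, which you correctly recast as jumps of the error), and conclude with Cauchy--Schwarz, the inverse trace inequality for $\Avg{\Grad Q_F}$, and Proposition~\ref{P:smoother-bounded}. The only blemish is a harmless sign slip, $\Jump{\widetilde U}=-\Jump{u-\widetilde U}$ rather than $\Jump{\widetilde U}=\Jump{u-\widetilde U}$, which does not affect the bound.
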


\begin{proof}
By comparing the problems~\eqref{Biot-weak} and \eqref{Biot-discrete}, we see that
\begin{equation}
\label{consistency-error-proof}
\begin{split}
B( (U, P_F), (V, Q_F) ) = 
2 \mu &\int_\Domain \SymGrad(u) \colon \SymGrad(\Smt_\CRLabel(V))
+
\int_\Domain p_T \Div(\Smt_\CRLabel(V))\\
+
&\int_\Domain m \Smt_\DGLabel(Q_F)
+\kappa \int_\Domain \Grad p_F \cdot \Grad \Smt_\DGLabel(Q_F).
\end{split}
\end{equation}
We aim at establishing a similar identity for $B((\widetilde U, \widetilde P_F), (V, Q_F))$. Integrating by parts piecewise, we see that
\begin{equation*}
\int_\Domain \SymGradM(\widetilde{U}) \colon \SymGradM(V)
=
\sum_{\MeshEl \in \Mesh} \int_{\partial \MeshEl}
\SymGradM(\widetilde{U}) \Normal_\MeshEl \cdot V
=
\int_{\Skel \cap \Domain} \Jump{\SymGradM(\widetilde{U})} \Normal \cdot V
\end{equation*}
where $\Normal_\MeshEl$ is the outer normal unit vector of $\MeshEl$ and the second identity follows from the definition \eqref{CR-space} of the space $\CR$. Since $\Jump{ \SymGradM(\widetilde{U})} \Normal$ is piecewise constant on $\Skel$, we apply \eqref{moments-preserved-CR} and we integrate back by parts 
\begin{equation*}
\int_\Domain \SymGradM(\widetilde{U}) \colon \SymGradM(V)
=
\int_{\Skel \cap \Domain} \Jump{\SymGradM(\widetilde{U})} \Normal \cdot \Smt_\CRLabel(V)
=
\int_\Domain \SymGradM(\widetilde{U}) \colon \SymGrad(\Smt_\CRLabel(V)).
\end{equation*}
The same argument entails that
\begin{equation*}
\int_\Domain \widetilde{P}_T \DivM(V)
=
\int_{\Skel \cap \Domain} \Jump{\widetilde{P}_T} V \cdot \Normal
=
\int_{\Skel \cap \Domain} \Jump{\widetilde{P}_T} \Smt_\CRLabel(V) \cdot \Normal
=
\int_\Domain \widetilde{P}_T \Div(\Smt_\CRLabel(V)).
\end{equation*}
We exploit the identities in \eqref{moments-preserved-DG} in a similar fashion. Hence, we obtain
\begin{equation*}
\begin{split}
B((\widetilde U, \widetilde P_F), (V, Q_F)) = 
2 \mu \left( \int_\Domain \SymGradM(\widetilde U) \colon \SymGrad(\Smt_\CRLabel(V))
+
\int_{\Skel} \MeshSize^{-1} \Jump{\widetilde{U}} \cdot \Jump{V} \right) \\
+
\int_\Domain \widetilde{P}_T \Div(\Smt_\CRLabel(V))
+
\int_\Domain \widetilde M \Smt_\DGLabel(Q_F)\\
+\kappa \left( \int_\Domain \GradM \widetilde P_F \cdot \Grad \Smt_\DGLabel(Q_F) 
-
\int_{\Skel} \Jump{\widetilde{P}_F} \Avg{\Grad Q_F} \cdot \Normal
+
\int_{\Skel} \dfrac{\eta}{\MeshSize} \Jump{\widetilde{P}_F} \Jump{Q_F}\right). 
\end{split}
\end{equation*}
We compare this identity with \eqref{consistency-error-proof}, then we apply the Cauchy-Schwartz inequality and the inverse estimate $\int_\Skel \Jump{\widetilde{P}_F} \Avg{\Grad Q_F}\cdot \Normal \lesssim ( \int_\Skel \MeshSize^{-1} |\Jump{\widetilde{P}_F}|^2 )^{\frac{1}{2}} \NormLeb{\GradM Q_F}{\Domain}$, cf. \cite[Lemma~1.46]{DiPietro.Ern:12}. It follows that
\begin{equation*}
\begin{split}
B((U- \widetilde{U}, P_F - \widetilde{P}_F), (V, Q_F))
&\lesssim
( 2 \mu \Norm{u- \widetilde{U}}_\CRLabel + \NormLeb{p_T - \widetilde{P}_T}{\Domain}) \Norm{\Smt_\CRLabel(V)}_\CRLabel\\
+ & 2 \mu \Norm{u- \widetilde{U}}_\CRLabel \Norm{V}_\CRLabel + \kappa \Norm{p_F - \widetilde{P}_F}_\DGLabel \Norm{Q_F}_\DGLabel\\
+ & 
(\kappa \Norm{p_F - \widetilde{P}_F}_\DGLabel + \Norm{m - \widetilde{M}}_{\SobHD{\Domain}} ) \Norm{\Smt_\DGLabel(Q_F)}_\DGLabel.
\end{split}
\end{equation*} 
We conclude by invoking the boundedness of $\Smt_\CRLabel$ and $\Smt_\DGLabel$ stated in Proposition~\ref{P:smoother-bounded} and by recalling the definitions of $\Err{\cdot}$ and $\Normtr{\cdot}_2$.
\end{proof}

Theorem~\ref{T:cont-infsup-discrete} and Proposition~\ref{P:consistency-error}, i.e. stability and consistency, readily entail that the problem~\eqref{Biot-discrete} is a quasi-optimal discretization of \eqref{Biot-weak}, with respect to the error notion $\ERR$.

\begin{theorem}[Quasi-optimality]
\label{T:quasi-optimality} 
Assume that \eqref{penalty-parameter} holds true. Let the load term $(f, g) \in \SobHD{\Domain; \Dim} \times \SobHD{\Domain}$ be given and denote by $(u, p_F) \in \SobH{\Domain}^\Dim \times \SobH{\Domain}$ and $(U, P_F) \in \CR^\Dim \times \PolyPiec{1}$ the corresponding solutions of the problems \eqref{Biot-weak} and \eqref{Biot-discrete}, respectively. Then, we have
\begin{equation}
\label{quasi-optimality}
\Err{(u,p_F), (U,P_F)} 
\lesssim 
\inf_{(\widetilde{U}, \widetilde{P}_F) \in \CR^\Dim \times \PolyPiec{1}}\Err{(u,p_F), (\widetilde U, \widetilde P_F)}
\end{equation}
where $\ERR$ is as in \eqref{error-notion} and the hidden constant only depends on the shape parameter $\Shape{\Mesh}$ of $\Mesh$.
\end{theorem}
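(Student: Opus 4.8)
The plan is to deduce the quasi-optimality estimate from the standard abstract paradigm ``stability + consistency $\Rightarrow$ quasi-optimality'', using the discrete inf-sup stability of $B$ from Theorem~\ref{T:cont-infsup-discrete} and the consistency bound from Proposition~\ref{P:consistency-error}. The subtle point is that the error notion $\ERR$ in \eqref{error-notion} is not a norm extending both $\Norm{\cdot}_1$ and $\Normtr{\cdot}_1$, so the usual triangle-inequality bookkeeping has to be carried out directly on the four summands.

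First I would fix an arbitrary $(\widetilde{U}, \widetilde{P}_F) \in \CR^\Dim \times \PolyPiec{1}$ and insert the intermediate function $(\widetilde{U}, \widetilde{P}_F)$: by the definition of $\ERR$ and the triangle inequality applied termwise (in the $\Norm{\cdot}_\CRLabel$-, $L^2$-, $\Norm{\cdot}_{\SobHD{\Domain}}$- and $\Norm{\cdot}_\DGLabel$-norms, and using $|a|\le|b|+|c|$ followed by $(|b|+|c|)^2\le 2|b|^2+2|c|^2$), one gets
\begin{equation*}
\Err{(u,p_F),(U,P_F)}
\lesssim
\Err{(u,p_F),(\widetilde{U},\widetilde P_F)}
+
\Err{(U,P_F),(\widetilde{U},\widetilde P_F)}.
\end{equation*}
Here it is important that on the intersection $\CR^\Dim \times \PolyPiec{1}$ the quantity $\Err{(U,P_F),(\widetilde{U},\widetilde P_F)}$ coincides (by \eqref{auxiliary-variables}) with $\Normtr{(U-\widetilde U, P_F-\widetilde P_F)}_1$, i.e. with the discrete trial norm of the difference. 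Thus it suffices to bound $\Normtr{(U-\widetilde U,P_F-\widetilde P_F)}_1$ by the right-hand side of \eqref{quasi-optimality}.

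Next I would apply the lower bound in \eqref{cont-infsup-discrete} to the discrete function $(U-\widetilde U, P_F-\widetilde P_F)$:
\begin{equation*}
\Normtr{(U-\widetilde U,P_F-\widetilde P_F)}_1
\lesssim
\sup_{(V,Q_F)\in\CR^\Dim\times\PolyPiec{1}}
\frac{B((U-\widetilde U,P_F-\widetilde P_F),(V,Q_F))}{\Normtr{(V,Q_F)}_2}.
\end{equation*}
The right-hand side is exactly the quantity estimated in Proposition~\ref{P:consistency-error}, which bounds it by $\Err{(u,p_F),(\widetilde U,\widetilde P_F)}$ up to a shape-parameter-dependent constant. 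Chaining the two displays with the triangle-inequality step above yields
\begin{equation*}
\Err{(u,p_F),(U,P_F)}
\lesssim
\Err{(u,p_F),(\widetilde U,\widetilde P_F)},
\end{equation*}
and taking the infimum over all $(\widetilde U,\widetilde P_F)\in\CR^\Dim\times\PolyPiec{1}$ gives \eqref{quasi-optimality}. The constant depends only on $\Shape{\Mesh}$, since that is the only dependence in Theorem~\ref{T:cont-infsup-discrete} and Proposition~\ref{P:consistency-error}.

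The only genuine obstacle is the first step: because $\ERR$ mixes a $\CR$-norm, two $L^2$-type norms and a $\DGLabel$-norm, one must check that the triangle inequality can be pushed through each summand and that the cross term $\Err{(U,P_F),(\widetilde U,\widetilde P_F)}$ really equals $\Normtr{(U-\widetilde U,P_F-\widetilde P_F)}_1$ — this relies on the linearity of the maps $\widetilde P_T$, $\widetilde M$ in \eqref{auxiliary-variables} and on the fact that, restricted to $\CR^\Dim\times\PolyPiec{1}$, the definitions of $\widetilde p_T,\widetilde m$ and of $\widetilde P_T,\widetilde M$ agree (both use $\LebPro{\PolyPiecAvg{0}}$, $\LebPro{\PolyPiec{0}}$ on the discrete side). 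Once this identification is made, the rest is a mechanical composition of two already-proven estimates.
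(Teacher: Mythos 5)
Your proof follows the paper's argument exactly: a termwise triangle inequality that isolates $\Normtr{(U-\widetilde U, P_F - \widetilde P_F)}_1$, then the discrete inf-sup stability of Theorem~\ref{T:cont-infsup-discrete}, then the consistency bound of Proposition~\ref{P:consistency-error}. The chain is correct and the dependence on $\Shape{\Mesh}$ is accounted for properly.

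One parenthetical remark in your writeup is, however, wrong and worth flagging. You claim that ``restricted to $\CR^\Dim \times \PolyPiec{1}$, the definitions of $\widetilde p_T, \widetilde m$ and of $\widetilde P_T, \widetilde M$ agree.'' They do not: for $\widetilde p_F \in \PolyPiec{1} \setminus \PolyPiec{0}$ one has $\LebPro{\LebH{\Domain}}(\widetilde p_F) = \widetilde p_F - \LebPro{\R}(\widetilde p_F) \neq \LebPro{\PolyPiecAvg{0}}(\widetilde p_F)$ and similarly $\sigma \widetilde p_F \neq \sigma \LebPro{\PolyPiec{0}}(\widetilde p_F)$ — this is precisely the mismatch caused by the reduced integration that the paper explicitly comments on right after \eqref{error-notion}, where it notes that $\Norm{\cdot}_1$ and $\Normtr{\cdot}_1$ act differently on the intersection. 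Fortunately your argument does not actually need this false identification: the termwise triangle inequality (applied to $u - U = (u-\widetilde U) + (\widetilde U - U)$, $p_T - P_T = (p_T - \widetilde P_T) + (\widetilde P_T - P_T)$, etc., where $P_T, \widetilde P_T$ and $M, \widetilde M$ are always formed with the discrete convention in \eqref{auxiliary-variables}) directly yields $\Err{(u,p_F),(U,P_F)} \le \Err{(u,p_F),(\widetilde U,\widetilde P_F)} + \Normtr{(U-\widetilde U,P_F-\widetilde P_F)}_1$ by Minkowski — no detour through $\Err{(U,P_F),(\widetilde U,\widetilde P_F)}$ (a quantity that the error functional does not define, since its first slot takes continuous functions) is needed, and in fact that quantity, if interpreted via the continuous convention with $\Div$ read as $\DivM$, would not coincide with $\Normtr{(U-\widetilde U,P_F-\widetilde P_F)}_1$. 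Also note: the genuine Minkowski inequality gives the sharp $\le$ here; your use of $(|b|+|c|)^2 \le 2|b|^2 + 2|c|^2$ unnecessarily loses a factor of $\sqrt 2$.
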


\begin{proof}
The triangle inequality and the definitions of $\ERR$ and of $\Normtr{\cdot}_1$ entail that
\begin{equation*}
\Err{(u,p_F), (U, P_F)}
\leq
\Err{(u, p_F), (\widetilde{U}, \widetilde{P}_F)}
+
\Normtr{(U-\widetilde{U}, P_F- \widetilde{P}_F)}_1
\end{equation*}
for all $(\widetilde U, \widetilde P_F) \in \CR^\Dim \times \PolyPiec{1}$. Then, we derive the claimed error bound by combining Theorem~\ref{T:cont-infsup-discrete} with Proposition~\ref{P:consistency-error}.	
\end{proof}

In a sense, the error bound in Theorem~\ref{T:quasi-optimality} is not fully operative. In fact, the approximations of the total pressure and of the total fluid content are constrained by the relations \eqref{auxiliary-variables} in the definition \eqref{error-notion} of $\ERR$. This entails that the behavior of the best error in the right-hand side of \eqref{quasi-optimality} is not immediately clear. Interestingly, the inclusions \eqref{spaces-pT-m} and \eqref{spaces-u-pF} readily imply that the left-hand side of \eqref{quasi-optimality} is bounded from below as follows 
\begin{equation}
\label{operative-error-bound-lower}
\begin{split}
\Err{(u,p_F), (U, &P_F)} 
\gtrsim
\Big(   \mu^2\kappa \inf_{\widehat{U} \in \CR^\Dim}
\Norm{{u} - \widehat U}_\CRLabel^2 
+ \kappa \inf_{\widehat{P}_T \in \PolyPiecAvg{0}}
\NormLeb{ p_T - \widehat{P}_T}{\Domain}^2 + \\ 
&+\mu \inf_{\widehat{M} \in \PolyPiec{0}}
\Norm{m - \widehat{M}}_{\SobHD{\Domain}}^2
+ \mu \kappa^2 \inf_{\widehat{P}_F \in \PolyPiec{1}}
\Norm{{p}_F - \widehat P_F}_\DGLabel^2 
\Big) ^{\frac{1}{2}}.
\end{split}
\end{equation}
The right-hand side in this estimate is easier to analyze than the one of \eqref{quasi-optimality}, because each variable is approximated independently of the other ones. Thus, one may ask whether the above lower bound can be somehow reversed.   

In answering this question, our main device is the existence of an interpolant that is simultaneously near best in the $L^2$- and in the $H^{-1}$-norms. We define such an interpolant with the help of a variant of the operator $\Smt_\DGLabel$.

\begin{lemma}[First-order moment-preserving operator]
\label{L:smootherF}
There is a linear operator $\mathcal{F}:\PolyPiec{1} \to \SobH{\Domain}$ which fulfills the condition
\begin{equation}
\label{smootherF-moments}
\forall \MeshEl \in \Mesh, \; S \in \Poly{1}(\MeshEl)
\qquad
\int_{\MeshEl} S \mathcal{F}(Q) 
=
\int_\MeshEl S Q
\end{equation}
and enjoys the estimates
\begin{equation}
\label{smootherF-bounded}
\NormLeb{\Grad \mathcal{F}(Q)}{\Domain}
\lesssim 
\Norm{Q}_\DGLabel
\qquad \text{and} \qquad
\NormLeb{\mathcal{F}(Q)}{\Domain}
\lesssim
\NormLeb{Q}{\Domain}
\end{equation}
for all $Q \in \PolyPiec{1}$, where the hidden constants only depend on the shape parameter $\Shape{\Mesh}$ of $\Mesh$.
\end{lemma}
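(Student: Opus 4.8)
The idea is to obtain $\mathcal{F}$ as a local bubble correction of the operator $\Smt_\DGLabel$ from \eqref{smoother-DG}, which already preserves the \emph{zeroth}-order moment on each simplex (see \eqref{moments-preserved-DG}) and, by Proposition~\ref{P:smoother-bounded}, is bounded both in $\Norm{\cdot}_\DGLabel$ and in the $L^2$-norm, with the local estimate
\begin{equation*}
\NormLeb{Q - \Smt_\DGLabel(Q)}{\MeshEl}
\lesssim
\sum_{\FaceEl \cap \MeshEl \neq \emptyset}
\Bigl( \int_\FaceEl \MeshSize \, |\Jump{Q}|^2 \Bigr)^{\frac12},
\qquad \MeshEl \in \Mesh,
\end{equation*}
isolated in the proof of that proposition. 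For every $\MeshEl \in \Mesh$ I would let $r_\MeshEl \in \Poly{1}(\MeshEl)$ be the unique solution of
\begin{equation*}
\forall S \in \Poly{1}(\MeshEl)
\qquad
\int_\MeshEl S_\MeshEl \, r_\MeshEl \, S
=
\int_\MeshEl (Q - \Smt_\DGLabel(Q)) \, S,
\end{equation*}
with $S_\MeshEl \in \SobH{\Domain} \cap \PolyPiec{\Dim+1}$ the interior bubble of \eqref{bubble-functions}; this problem is well posed because $(S, S') \mapsto \int_\MeshEl S_\MeshEl S S'$ is an inner product on the finite-dimensional space $\Poly{1}(\MeshEl)$, since $S_\MeshEl$ is positive in the interior of $\MeshEl$. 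Then I would define
\begin{equation*}
\mathcal{F}(Q) := \Smt_\DGLabel(Q) + \sum_{\MeshEl \in \Mesh} S_\MeshEl \, r_\MeshEl .
\end{equation*}
Equivalently, this amounts to upgrading the scalar correction in the last term of \eqref{smoother-DG} to an affine one.

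Each summand $S_\MeshEl r_\MeshEl$ is supported in $\overline{\MeshEl}$, is continuous and vanishes on $\Skel$, so it belongs to $\SobH{\Domain}$ and the supports of distinct corrections are essentially disjoint; hence $\mathcal{F}(Q) \in \SobH{\Domain}$. The moment condition \eqref{smootherF-moments} then follows at once: for $S \in \Poly{1}(\MeshEl)$, using that the bubbles of the remaining simplices vanish on $\MeshEl$ and then the defining relation of $r_\MeshEl$,
\begin{equation*}
\int_\MeshEl S \, \mathcal{F}(Q)
=
\int_\MeshEl S \, \Smt_\DGLabel(Q)
+
\int_\MeshEl S_\MeshEl \, r_\MeshEl \, S
=
\int_\MeshEl S \, \Smt_\DGLabel(Q)
+
\int_\MeshEl (Q - \Smt_\DGLabel(Q)) \, S
=
\int_\MeshEl S \, Q .
\end{equation*}

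For the bounds \eqref{smootherF-bounded} the crucial local estimate is $\NormLeb{S_\MeshEl r_\MeshEl}{\MeshEl} \lesssim \NormLeb{Q - \Smt_\DGLabel(Q)}{\MeshEl}$, with a constant depending only on $\Dim$ and $\Shape{\Mesh}$. I would prove it by a scaling argument: pulling $\MeshEl$ back to a reference simplex turns the defining problem into a fixed finite-dimensional one, so it suffices to combine the equivalence, with absolute constants, of $\widehat r \mapsto \int \widehat{S} \, \widehat{r}^{\,2}$ and $\widehat r \mapsto \|\widehat r\|^2$ on $\Poly{1}$ of the reference element with the elementary bound $\NormLeb{S_\MeshEl}{L^\infty(\MeshEl)} \lesssim |\MeshEl|^{-1}$; testing the defining relation with $S = r_\MeshEl$ then gives $|\MeshEl|^{-1}\NormLeb{r_\MeshEl}{\MeshEl}^2 \lesssim \NormLeb{Q-\Smt_\DGLabel(Q)}{\MeshEl}\NormLeb{r_\MeshEl}{\MeshEl}$, hence $\NormLeb{r_\MeshEl}{\MeshEl} \lesssim |\MeshEl| \, \NormLeb{Q-\Smt_\DGLabel(Q)}{\MeshEl}$, and the claim follows. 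Granted this, the $L^2$-bound in \eqref{smootherF-bounded} follows by squaring, summing over $\MeshEl \in \Mesh$, and using the triangle inequality together with the second part of \eqref{smoother-DG-bounded}, namely
\begin{equation*}
\NormLeb{\mathcal{F}(Q)}{\Domain}^2
\lesssim
\NormLeb{\Smt_\DGLabel(Q)}{\Domain}^2
+
\NormLeb{Q - \Smt_\DGLabel(Q)}{\Domain}^2
\lesssim
\NormLeb{Q}{\Domain}^2 .
\end{equation*}
For the $H^1$-bound I would combine an inverse inequality on $\Poly{\Dim+2}(\MeshEl)$ with the displayed local estimate for $\NormLeb{Q - \Smt_\DGLabel(Q)}{\MeshEl}$ and the comparison $\MeshSize_{|\FaceEl} \approx \mathrm{diam}(\MeshEl)$ for faces touching $\MeshEl$, obtaining $\NormLeb{\Grad(S_\MeshEl r_\MeshEl)}{\MeshEl} \lesssim \mathrm{diam}(\MeshEl)^{-1} \NormLeb{Q - \Smt_\DGLabel(Q)}{\MeshEl} \lesssim \sum_{\FaceEl \cap \MeshEl \neq \emptyset} ( \int_\FaceEl \MeshSize^{-1} |\Jump{Q}|^2 )^{1/2}$; squaring, summing over $\MeshEl$ (the overlap being controlled by $\Shape{\Mesh}$) and using $\MeshSize^{-1} = \eta^{-1}\cdot \eta\MeshSize^{-1} < \overline{\eta}^{-1}\cdot\eta\MeshSize^{-1}$ with $\eta > \overline{\eta}$ from \eqref{penalty-parameter} then yields $\NormLeb{\Grad \sum_{\MeshEl} S_\MeshEl r_\MeshEl}{\Domain} \lesssim \Norm{Q}_\DGLabel$, while $\NormLeb{\Grad \Smt_\DGLabel(Q)}{\Domain} = \Norm{\Smt_\DGLabel(Q)}_\DGLabel \lesssim \Norm{Q}_\DGLabel$ by the first part of \eqref{smoother-DG-bounded}, since $\Smt_\DGLabel(Q) \in \SobH{\Domain}$; the triangle inequality concludes. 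The only genuinely delicate point is the reference-element bookkeeping behind the crucial local estimate $\NormLeb{S_\MeshEl r_\MeshEl}{\MeshEl} \lesssim \NormLeb{Q - \Smt_\DGLabel(Q)}{\MeshEl}$; everything else is an assembly of results already established in this section.
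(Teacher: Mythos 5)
Your proposal is correct and follows essentially the same route as the paper: the paper also builds $\mathcal{F}$ by adding to a smoother a local $\Poly{1}(\MeshEl)$-coefficient bubble correction determined by the same well-posed local problem, and (since $\Smt_\DGLabel = \Smt + \text{constant bubble correction}$) your construction produces the identical operator even though you correct $\Smt_\DGLabel$ rather than $\Smt$. The paper declares the bounds \eqref{smootherF-bounded} ``similar to Proposition~\ref{P:smoother-bounded}'' and omits them; your scaling argument for $\NormLeb{S_\MeshEl r_\MeshEl}{\MeshEl}\lesssim\NormLeb{Q-\Smt_\DGLabel(Q)}{\MeshEl}$ and the subsequent assembly supply exactly the omitted details.
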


\begin{proof}
Recall the bubble functions $(S_\MeshEl)_{\MeshEl \in \Mesh}$ and the operator $\Smt: \PolyPiec{1} \to \SobH{\Domain}$ from \eqref{bubble-functions} and \eqref{smoother}, respectively. For all $Q \in \PolyPiec{1}$, we define
\begin{equation*}
\label{smootherF}
\mathcal{F}(Q) :=
\Smt(Q) + 
\sum_{\MeshEl \in \Mesh} \mathcal{F}_\MeshEl(Q-\Smt(Q))S_\MeshEl
\end{equation*}	
where, for $\MeshEl \in \Mesh$, the operator $\mathcal{F}_\MeshEl: \Leb{\Domain} \to \Poly{1}(\MeshEl)$ is uniquely determined through the problem
\begin{equation*}
\label{smootherF-local}
\forall S \in \Poly{1}(\MeshEl) 
\qquad
\int_\MeshEl S\mathcal{F}_\MeshEl(Q)S_\MeshEl
=
\int_\MeshEl SQ.
\end{equation*}
Note, in particular, that we indeed have $\mathcal{F}(Q) \in \SobH{\Domain}$. Since each bubble $S_{\MeshEl^\prime}$, $\MeshEl^\prime \in \Mesh$, vanishes outside $\MeshEl^\prime$, we have
\begin{equation*}
\int_\MeshEl S \mathcal{F}(Q)
=
\int_\MeshEl S \Smt(Q)
+
\int_\MeshEl S \mathcal{F}_\MeshEl(Q-\Smt(Q))S_\MeshEl
=
\int_{\MeshEl} SQ
\end{equation*} 
for all $\MeshEl \in \Mesh$ and $S \in \Poly{1}(\MeshEl)$. This confirms that $\mathcal{F}$ fulfills the condition \eqref{smootherF-moments}. The proof of the estimates in \eqref{smootherF-bounded} is similar to the one of \eqref{smoother-DG-bounded} in Proposition~\ref{P:smoother-bounded}, therefore we omit it.
\end{proof}

We are now in position to introduce the announced interpolant. Roughly speaking, it is defined as the adjoint of the operator $\mathcal{F}$ in the previous lemma.

\begin{lemma}[$L^2$- and $H^{-1}$-stable interpolant]
\label{L:interpolant}
Let $\Interp: \Leb{\Domain} \to \PolyPiec{0}$ be defined through the problem
\begin{equation}
\label{interpolant}
\forall Q \in \PolyPiec{0}
\qquad
\int_\Domain \Interp(q) Q
=
\int_\Domain q \mathcal{F}(Q)
\end{equation}
where $\mathcal{F}$ is as in Lemma~\ref{L:smootherF}. Then, we have
\begin{equation}
\label{interpolant-consistency}
\forall q \in \PolyPiec{1}
\qquad
\Interp(q) = \LebPro{\PolyPiec{0}}(q).
\end{equation}
Moreover, the following estimates hold true for all $q \in \Leb{\Domain}$
\begin{equation}
\label{interpolant-bounded}
\NormLeb{\Interp(q)}{\Domain}
\lesssim
\NormLeb{q}{\Domain}
\qquad \text{and} \qquad
\Norm{\Interp(q)}_{\SobHD{\Domain}}
\lesssim
\Norm{q}_{\SobHD{\Domain}}
\end{equation}
and the hidden constants only depend on the shape parameter $\Shape{\Mesh}$ of $\Mesh$.
\end{lemma}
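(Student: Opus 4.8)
The plan is to argue in four short steps: well-posedness of the definition \eqref{interpolant}, the projection identity \eqref{interpolant-consistency}, and then the two stability bounds in \eqref{interpolant-bounded}.

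First I would check that $\Interp$ is well defined. For fixed $q \in \Leb{\Domain}$, the map $Q \mapsto \int_\Domain q\,\mathcal{F}(Q)$ is a linear functional on the finite-dimensional space $\PolyPiec{0}$, hence it is represented by a unique element $\Interp(q) \in \PolyPiec{0}$ with respect to the $L^2$-scalar product; linearity of $\mathcal{F}$ then makes $\Interp$ linear in $q$.

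Second, for $q \in \PolyPiec{1}$ and any $Q \in \PolyPiec{0} \subseteq \PolyPiec{1}$, I would apply the moment condition \eqref{smootherF-moments} elementwise with the admissible choice $S = q_{|\MeshEl} \in \Poly{1}(\MeshEl)$, obtaining $\int_\MeshEl q\,\mathcal{F}(Q) = \int_\MeshEl q Q$ on each simplex; summing over $\Mesh$ gives $\int_\Domain q\,\mathcal{F}(Q) = \int_\Domain q Q$, and comparing with \eqref{interpolant} yields $\int_\Domain \Interp(q)Q = \int_\Domain qQ$ for all $Q\in\PolyPiec{0}$, which is exactly \eqref{interpolant-consistency}. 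For the $L^2$-bound I would then test \eqref{interpolant} with $Q = \Interp(q) \in \PolyPiec{0} \subseteq \PolyPiec{1}$, so that $\NormLeb{\Interp(q)}{\Domain}^2 = \int_\Domain q\,\mathcal{F}(\Interp(q)) \le \NormLeb{q}{\Domain}\NormLeb{\mathcal{F}(\Interp(q))}{\Domain} \lesssim \NormLeb{q}{\Domain}\NormLeb{\Interp(q)}{\Domain}$ by Cauchy--Schwarz and the second estimate in \eqref{smootherF-bounded}; dividing by $\NormLeb{\Interp(q)}{\Domain}$ (the case $\Interp(q)=0$ being trivial) gives the first bound in \eqref{interpolant-bounded}.

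Fourth --- this is the step with the most moving parts --- for the $H^{-1}$-bound I would estimate $\int_\Domain \Interp(q)q_F$ for an arbitrary $q_F \in \SobH{\Domain}$. Since $\Interp(q) \in \PolyPiec{0}$, I may replace $q_F$ by $\LebPro{\PolyPiec{0}}(q_F)$ and then use \eqref{interpolant} to write $\int_\Domain \Interp(q)q_F = \int_\Domain \Interp(q)\LebPro{\PolyPiec{0}}(q_F) = \int_\Domain q\,\mathcal{F}(\LebPro{\PolyPiec{0}}(q_F))$. Because $\mathcal{F}(\LebPro{\PolyPiec{0}}(q_F)) \in \SobH{\Domain}$, the definition of the $H^{-1}$-norm bounds this by $\Norm{q}_{\SobHD{\Domain}}\NormLeb{\Grad \mathcal{F}(\LebPro{\PolyPiec{0}}(q_F))}{\Domain}$; the first estimate in \eqref{smootherF-bounded} controls the last factor by $\Norm{\LebPro{\PolyPiec{0}}(q_F)}_\DGLabel$, which is in turn $\lesssim \NormLeb{\Grad q_F}{\Domain}$ by the boundedness of $\LebPro{\PolyPiec{0}}$ in the $\Norm{\cdot}_\DGLabel$-norm established within the proof of Proposition~\ref{P:cont-infsup-auxiliary-discrete}, cf. \eqref{L2-projectio-H1-bounded}. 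Taking the supremum over $q_F \in \SobH{\Domain}$ then gives the second bound in \eqref{interpolant-bounded}, with all constants depending only on $\Shape{\Mesh}$. The only point requiring a little care is chaining these three boundedness results correctly; everything else is routine.
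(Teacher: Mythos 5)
Your proposal is correct and follows essentially the same approach as the paper. The only cosmetic difference is in the $H^{-1}$-bound: you work directly from the definition of the $H^{-1}$-norm and thread the $L^2$-projection $\LebPro{\PolyPiec{0}}$ through by hand via \eqref{L2-projectio-H1-bounded}, whereas the paper first passes to the discrete characterization of the $H^{-1}$-norm of piecewise constants from Remark~\ref{R:sharpened-continuity-infsup}; these are the same underlying argument, since the proof of that remark itself rests on \eqref{L2-projectio-H1-bounded}.
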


\begin{proof}
Let $q \in \Leb{\Domain}$ be given. The second part of \eqref{smootherF-bounded} implies that
\begin{equation*}
\NormLeb{\Interp(q)}{\Domain}
=
\sup_{Q \in \PolyPiec{0}}
\dfrac{\int_\Domain \Interp(q) Q}{\NormLeb{Q}{\Domain}}
=
\sup_{Q \in \PolyPiec{0}}
\dfrac{\int_\Domain q \mathcal{F}(Q)}{\NormLeb{Q}{\Domain}}
\lesssim
\NormLeb{q}{\Domain}.
\end{equation*}
Similarly, Remark~\ref{R:sharpened-continuity-infsup} and the first part of \eqref{smootherF-bounded} reveal that
\begin{equation*}
\Norm{\Interp(q)}_{\SobHD{\Domain}}
\lesssim
\sup_{Q \in \PolyPiec{0}} \dfrac{\int_\Domain \Interp(q) Q}{\Norm{Q}_\DGLabel}
=
\sup_{Q \in \PolyPiec{0}} \dfrac{\int_\Domain q \mathcal{F}(Q)}{\Norm{Q}_\DGLabel}
\lesssim
\Norm{q}_{\SobHD{\Domain}}.
\end{equation*} 
This inequality and the previous one confirm that \eqref{interpolant-bounded} holds true. Finally, if $q \in \PolyPiec{1}$, we infer that
\begin{equation*}
\int_\Domain \Interp(q)Q
=
\int_\Domain q \mathcal{F}(Q)
=
\int_\Domain q Q
\end{equation*}
for all $Q \in \PolyPiec{0}$, as a consequence of \eqref{smootherF-moments}. Hence, we have $\Interp(q) = \LebPro{\PolyPiec{0}}(q)$.
\end{proof}

We are now in position to elaborate on the quasi-optimal error estimate in Theorem~\ref{T:quasi-optimality}. Roughly speaking, the boundedness of the interpolant $\Interp$ in the $L^2$- and in the $H^{-1}$-norms allows us to approximate both the total pressure and the total fluid content by $\Interp$. The condition \eqref{interpolant-consistency} serves to deal with the reduced integration, cf. Remark~\ref{R:reduced-integration}. Then, in a sense, we invert the relations in the second line of \eqref{auxiliary-variables} in a stable way, with the help of the equivalences stated in Proposition~\ref{P:cont-infsup-auxiliary-discrete}.

\begin{theorem}[Operative error bound]
\label{T:operative-error-bound}	
Assume that \eqref{penalty-parameter} holds true. Let the load term $(f, g) \in \SobHD{\Domain; \Dim} \times \SobHD{\Domain}$ be given and let $(u, p_F) \in \SobH{\Domain}^\Dim \times \SobH{\Domain}$ and $(U, P_F) \in \CR^\Dim \times \PolyPiec{1}$ be the corresponding solutions of the problems \eqref{Biot-weak} and \eqref{Biot-discrete}, respectively. Recall also the variables $p_T \in \LebH{\Domain}$ and $m \in \SobHD{\Domain}$ from \eqref{total-pressure} and \eqref{total-fluid-content}. Then, we have
\begin{equation*}
\label{operative-error-bound}
\begin{split}
&\Err{(u,p_F), (U,P_F)} 
\lesssim 
\Big(   \mu^2\kappa \inf_{\widehat{U} \in \CR^\Dim}
\Norm{u - \widehat U}_\CRLabel^2 
+ \kappa \inf_{\widehat P_T \in \PolyPiecAvg{0}}
\NormLeb{p_T - \widehat{P}_T}{\Domain}^2 +  \\
+&\mu \inf_{\widehat{M} \in \PolyPiec{0}}
\Norm{m - \widehat{M}}_{\SobHD{\Domain}}^2
+ \mu \alpha^2 \inf_{ \widehat{D} \in \PolyPiec{1} }
\NormLeb{\Div(u)-\widehat D}{\Domain}^2
+ \mu \kappa^2 \inf_{\widehat{P}_F \in \PolyPiec{1}}
\Norm{p_F - \widehat P_F}_\DGLabel^2 
\Big) ^{\frac{1}{2}}.
\end{split}
\end{equation*}
where $\ERR$ is as in \eqref{error-notion} and the hidden constant only depends on the shape parameter $\Shape{\Mesh}$ of $\Mesh$.
\end{theorem}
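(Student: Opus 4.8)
The plan is to combine the quasi-optimality bound of Theorem~\ref{T:quasi-optimality} with a careful choice of the comparison pair. Since Theorem~\ref{T:quasi-optimality} gives $\Err{(u,p_F),(U,P_F)}\lesssim\Err{(u,p_F),(\widetilde U,\widetilde P_F)}$ for \emph{every} $(\widetilde U,\widetilde P_F)\in\CR^\Dim\times\PolyPiec{1}$, it is enough to exhibit one such pair for which $\Err{(u,p_F),(\widetilde U,\widetilde P_F)}$ is controlled by the right-hand side above. Observe that, by \eqref{auxiliary-variables} with first argument $(u,p_F)$, the quantities denoted $\widetilde p_T$ and $\widetilde m$ in \eqref{error-notion} coincide with $p_T$ and $m$, whereas $\widetilde P_T$ and $\widetilde M$ are fixed by $(\widetilde U,\widetilde P_F)$ through \eqref{pT-m-discrete}.

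First I would fix the fluid pressure. Take $\widehat P_F\in\PolyPiec{1}$ near best for $p_F$ in $\Norm{\cdot}_\DGLabel$ and set $\widetilde P_F:=\widehat P_F+\LebPro{\PolyPiec{0}}(p_F-\widehat P_F)$. The $\Norm{\cdot}_\DGLabel$-stability of $\LebPro{\PolyPiec{0}}$ (cf.\ \eqref{L2-projectio-H1-bounded} and the proof of Proposition~\ref{P:cont-infsup-auxiliary-discrete}) keeps $\widetilde P_F$ near best in $\Norm{\cdot}_\DGLabel$, while in addition $\LebPro{\PolyPiec{0}}(\widetilde P_F)=\LebPro{\PolyPiec{0}}(p_F)$ and $\int_\Domain\widetilde P_F=\int_\Domain p_F$; the purpose of this correction is to remove from $\widetilde P_T$ and $\widetilde M$ the contribution of $\widetilde P_F-p_F$, which would otherwise enter with uncontrolled powers of $\lambda$, and, via \eqref{interpolant-consistency}, to be compatible with the reduced integration built into $B$. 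Next I would fix the displacement so as to prescribe its divergence. Let $\widehat U\in\CR^\Dim$ be near best for $u$ in $\Norm{\cdot}_\CRLabel$; by the discrete Stokes inf-sup inequality (the first equivalence in Proposition~\ref{P:cont-infsup-auxiliary-discrete}) and Korn's inequality \eqref{Korn-piecewise} there is $Z\in\CR^\Dim$ with $\DivM(Z)=\LebPro{\PolyPiecAvg{0}}(\Div u)-\DivM(\widehat U)$ and $\Norm{Z}_\CRLabel\lesssim\NormLeb{\LebPro{\PolyPiecAvg{0}}(\Div u)-\DivM(\widehat U)}{\Domain}\lesssim\Norm{u-\widehat U}_\CRLabel$; put $\widetilde U:=\widehat U+Z$, so that $\widetilde U$ is still near best for $u$ and $\DivM(\widetilde U)=\LebPro{\PolyPiecAvg{0}}(\Div u)$.

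With these choices, \eqref{pT-m-discrete}, \eqref{total-pressure} and \eqref{total-fluid-content} yield the \emph{exact} identities $\widetilde P_T=\LebPro{\PolyPiecAvg{0}}(p_T)$ and $\widetilde M=\LebPro{\PolyPiec{0}}(m)$ (using $\LebPro{\PolyPiecAvg{0}}=\LebPro{\PolyPiec{0}}$ on zero-average functions). Hence, in $\Err{(u,p_F),(\widetilde U,\widetilde P_F)}$, the first term is a near-best error for $u$, the second term equals $\kappa\NormLeb{p_T-\LebPro{\PolyPiecAvg{0}}(p_T)}{\Domain}^2=\kappa\inf_{\widehat P_T\in\PolyPiecAvg{0}}\NormLeb{p_T-\widehat P_T}{\Domain}^2$, and the last term is a near-best error for $p_F$ in $\Norm{\cdot}_\DGLabel$; all three already match the corresponding terms on the right-hand side.

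The hard part is the third term, $\mu\Norm{m-\LebPro{\PolyPiec{0}}(m)}_{\SobHD{\Domain}}^2$, since $\LebPro{\PolyPiec{0}}$, although near best in $L^2$, is not known to be near best in the $H^{-1}$-norm, so it cannot simply be bounded by $\mu\inf_{\widehat M}\Norm{m-\widehat M}_{\SobHD{\Domain}}^2$. The device to bypass this is the interpolant $\Interp$ of Lemma~\ref{L:interpolant}, which is near best in $L^2$ and in $H^{-1}$ simultaneously. I would split $m-\LebPro{\PolyPiec{0}}(m)=(m-\Interp(m))+(\Interp(m)-\LebPro{\PolyPiec{0}}(m))$: the first summand is bounded in $\SobHD{\Domain}$ by $\inf_{\widehat M\in\PolyPiec{0}}\Norm{m-\widehat M}_{\SobHD{\Domain}}$, using $\Interp=\LebPro{\PolyPiec{0}}$ on $\PolyPiec{1}$ and the $H^{-1}$-stability in \eqref{interpolant-bounded}; the second summand lies in $\PolyPiec{0}$, so by Remark~\ref{R:sharpened-continuity-infsup} its $H^{-1}$-norm is equivalent to $\sup_{Q\in\PolyPiec{0}}\int_\Domain(\Interp(m)-\LebPro{\PolyPiec{0}}(m))Q\,/\,\Norm{Q}_\DGLabel$, and the numerator equals $\int_\Domain m(\mathcal{F}(Q)-Q)$ by the definitions of $\Interp$ and $\LebPro{\PolyPiec{0}}$. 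Writing $m=\alpha\Div u+\sigma p_F$, exploiting the elementwise moment preservation \eqref{smootherF-moments} of $\mathcal{F}$ (so that $m$ may be replaced by $m-S$ for any $S\in\PolyPiec{1}$), and using the local estimate $\NormLeb{\mathcal{F}(Q)-Q}{\Domain}\lesssim\Norm{Q}_\DGLabel$ underlying \eqref{smoother-DG-bounded}, the $\Div u$-part is estimated by a multiple of $\alpha\inf_{\widehat D\in\PolyPiec{1}}\NormLeb{\Div u-\widehat D}{\Domain}$ — which is exactly where the extra term $\mu\alpha^2\inf_{\widehat D\in\PolyPiec{1}}\NormLeb{\Div u-\widehat D}{\Domain}^2$ originates — while the $\sigma p_F$-part is, by a Poincar\'e--Friedrichs estimate in $\Norm{\cdot}_\DGLabel$, absorbed into the best-error terms already present. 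Squaring and summing the four bounds gives the claimed estimate. I expect the bookkeeping of this last step — tracking which best errors absorb the residual $\Interp(m)-\LebPro{\PolyPiec{0}}(m)$ while keeping all constants independent of the material parameters — to be the main technical difficulty.
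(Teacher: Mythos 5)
Your overall strategy is the same as the paper's: invoke the quasi-optimality estimate of Theorem~\ref{T:quasi-optimality} with a carefully engineered comparison pair $(\widetilde U,\widetilde P_F)$, and use the interpolant $\Interp$ of Lemma~\ref{L:interpolant} to handle the $H^{-1}$-norm. However, there is a genuine gap in the way you build $(\widetilde U,\widetilde P_F)$, and it is precisely the step you flag as the main difficulty.

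You prescribe $\DivM(\widetilde U)=\LebPro{\PolyPiecAvg{0}}(\Div u)$ and $\LebPro{\PolyPiec{0}}(\widetilde P_F)=\LebPro{\PolyPiec{0}}(p_F)$, which forces $\widetilde M=\LebPro{\PolyPiec{0}}(m)$. You then have to estimate $\Norm{m-\LebPro{\PolyPiec{0}}(m)}_{\SobHD\Domain}$ and, after splitting via $\Interp(m)$, you are left with $\Norm{\Interp(m)-\LebPro{\PolyPiec{0}}(m)}_{\SobHD\Domain}$. Your reduction of this quantity to $\sup_Q\int_\Domain(m-S)(\mathcal F(Q)-Q)/\Norm{Q}_\DGLabel$ is correct, but after Cauchy--Schwarz it is controlled only by $\inf_{S\in\PolyPiec{1}}\NormLeb{m-S}{\Domain}$. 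Decomposing $m=\alpha\Div u+\sigma p_F$ then gives the wanted $\alpha\inf_{\widehat D}\NormLeb{\Div u-\widehat D}{\Domain}$ plus a leftover $\sigma\inf_{S_2\in\PolyPiec{1}}\NormLeb{p_F-S_2}{\Domain}$. This second term has no counterpart in the right-hand side of the theorem: the only $p_F$-term available there is $\mu\kappa^2\inf\Norm{p_F-\widehat P_F}_\DGLabel^2$, and one cannot pass from $\sigma^2\NormLeb{p_F-S_2}{\Domain}^2$ to $\kappa^2\Norm{p_F-\widehat P_F}_\DGLabel^2$ without a factor $(\sigma/\kappa)^2$, which is unbounded over the admissible parameter range. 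A Poincar\'e--Friedrichs inequality does not change the parameter scaling, so this summand cannot be absorbed as you claim, and the argument as written does not close.

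The fix, and what the paper actually does, is to route the interpolant $\Interp$ through the construction of the comparison pair itself rather than introduce it afterwards: one prescribes $\DivM(\widetilde U)=\LebPro{\PolyPiecAvg{0}}\Interp(\Div u)$ and $\LebPro{\PolyPiec{0}}(\widetilde P_F)=\Interp(p_F)$ (using the bounded right inverses $\Rinv_\CRLabel$, $\Rinv_\DGLabel$ coming from Proposition~\ref{P:cont-infsup-auxiliary-discrete}). Since $\Interp$ is linear, one then obtains $\widetilde M=\Interp(m)-\alpha\LebPro{\R}\Interp(\Div u)$, i.e.\ the defect between $\widetilde M$ and $\Interp(m)$ involves only $\Div u$, not $p_F$. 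The $\Norm{\cdot}_{\SobHD\Domain}$-bound on $\alpha\LebPro{\R}\Interp(\Div u)$ is obtained by exactly the duality argument you use (moment preservation of $\mathcal F$, Cauchy--Schwarz, piecewise Poincar\'e), and produces precisely the extra term $\mu\alpha^2\inf_{\widehat D\in\PolyPiec{1}}\NormLeb{\Div u-\widehat D}{\Domain}^2$, with no $\sigma$-dependent residual. The other four best-error terms come out just as you describe, using the $L^2$-stability and $\Interp=\LebPro{\PolyPiec{0}}$-on-$\PolyPiec{1}$ property for the $p_T$-term, the $H^{-1}$-stability of $\Interp$ for the $m$-term, and the boundedness of the right inverses for the $u$- and $p_F$-terms. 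In short: replace every occurrence of $\LebPro{\PolyPiec{0}}$ and $\LebPro{\PolyPiecAvg{0}}$ applied to $\Div u$ and $p_F$ in your constructions of $\widetilde U$, $\widetilde P_F$ by $\Interp$ (composed with $\LebPro{\PolyPiecAvg{0}}$ only where the zero-average constraint forces it), and the argument goes through with parameter-independent constants.
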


\begin{proof}
The first part of \eqref{cont-infsup-auxiliary-discrete} implies that there is a linear operator $\Rinv_\CRLabel: \PolyPiecAvg{0} \to \CR^\Dim$ such that
\begin{equation*}
\label{Rinv-CR}
\DivM(\Rinv_\CRLabel(Q_0)) = Q_0
\qquad \text{and} \qquad
\Norm{\Rinv_\CRLabel(Q_0)}_\CRLabel
\lesssim
\NormLeb{Q_0}{\Domain}
\end{equation*}
for all $Q_0 \in \PolyPiecAvg{0}$, i.e. $\Rinv_\CRLabel$ is a bounded right inverse of the broken divergence, cf. \cite[section~4.2.2]{Boffi.Brezzi.Fortin:13}. Similarly, the second part of \eqref{cont-infsup-auxiliary-discrete} reveals that there is a linear operator $\Rinv_\DGLabel: \PolyPiec{0} \mapsto \PolyPiec{1}$ such that
\begin{equation*}
\label{Rinv-DG}
\LebPro{\PolyPiec{0}}(\Rinv_\DGLabel(Q)) = Q
\qquad \text{and} \qquad
\Norm{\Rinv_\DGLabel(Q)}_\DGLabel
\lesssim
\NormLeb{Q}{\SobHD{\Domain}}
\end{equation*}
for all $Q \in \PolyPiec{0}$, i.e. $\Rinv_\DGLabel$ is a bounded right inverse of the $L^2$-orthogonal projection onto $\PolyPiec{0}$. Let $\widehat{U} \in \CR^\Dim$ and $\widehat{P}_F \in \PolyPiec{1}$ be given and define
\begin{equation}
\label{operative-error-bound-approximations}
\begin{aligned}
\widetilde{U} &:= \widehat{U} + \Rinv_\CRLabel(\LebPro{\PolyPiecAvg{0}}\Interp(\Div(u)) - \DivM(\widehat{U}))\\
\widetilde{P}_F &:= \widehat{P}_F + \Rinv_\DGLabel(\Interp(p_F) - \LebPro{\PolyPiec{0}}(\widehat{P}_F)).
\end{aligned}
\end{equation}
By definition, we have $\widetilde{U} \in \CR^\Dim$ and $\widetilde{P}_F \in \PolyPiec{1}$, as well as 
\begin{equation*}
\DivM(\widetilde{U}) = \LebPro{\PolyPiecAvg{0}}\Interp(\Div(u))
\qquad \text{and} \qquad
\LebPro{\PolyPiec{0}}(\widetilde{P}_F) = \Interp(p_F).
\end{equation*}
Thus, the auxiliary variables $\widetilde{P}_T \in \PolyPiecAvg{0}$ and $\widetilde{M} \in \PolyPiec{0}$ from \eqref{auxiliary-variables} are such that 
\begin{equation}
\label{operative-error-bound-auxiliary-variables}
\begin{aligned}
\widetilde{P}_T
&=
\lambda \LebPro{\PolyPiecAvg{0}}\Interp(\Div(u))
-
\alpha \LebPro{\PolyPiecAvg{0}}\Interp(p_F)
=
\LebPro{\PolyPiecAvg{0}}\Interp(p_T)\\
\widetilde{M}
&=
\alpha \LebPro{\PolyPiecAvg{0}}\Interp(\Div(u))
+
\sigma \Interp(p_F)
=
\Interp(m) - \alpha\LebPro{\R}\Interp(\Div(u))
\end{aligned}
\end{equation}
where $\LebPro{\R}$ denotes the $L^2$-orthogonal projection onto $\R$, i.e. onto the constant functions. Thus, by invoking Theorem~\ref{T:quasi-optimality} and recalling the definition of the error notion $\ERR$, we infer that
\begin{equation}
\label{operative-error-bound-proof}
\begin{split}
&\Err{(u,p_F), (U,P_F)} 
\lesssim 
\Big(   \mu^2\kappa 
\Norm{u - \widetilde U}_\CRLabel^2 
+ \kappa 
\NormLeb{p_T - \LebPro{\PolyPiecAvg{0}} \Interp(\widetilde{p}_T)}{\Domain}^2 +  \\
&+\mu 
\Norm{m - \Interp(m)}_{\SobHD{\Domain}}^2
+ \mu \alpha^2 
\Norm{\LebPro{\R}\Interp(\Div(u))}_{\SobHD{\Domain}}^2
+ \mu \kappa^2
\Norm{p_F - \widetilde P_F}_\DGLabel^2 
\Big) ^{\frac{1}{2}}.
\end{split}
\end{equation}
We estimate the five terms in the right-hand side one by one. The definition of $\widetilde{U}$, the boundedness of $\Rinv_\CRLabel$ and Lemma~\ref{L:interpolant} imply that
\begin{equation*}
\Norm{u-\widetilde{U}}_\CRLabel
\lesssim
\Norm{u-\widehat{U}}_\CRLabel 
+
\NormLeb{\Interp(\DivM(u-\widehat{U}))}{\Domain}
\lesssim
\Norm{u-\widehat{U}}_\CRLabel. 
\end{equation*}
According to the inclusion $p_T \in \LebH{\Domain}$, we have the identity $p_T - \LebPro{\PolyPiecAvg{0}} \Interp(p_T) = \LebPro{\LebH{\Domain}}(p_T - \Interp(p_T))$. Hence, it holds that
\begin{equation*}
\NormLeb{p_T - \LebPro{\PolyPiecAvg{0}} \Interp(\widetilde{p}_T)}{\Domain}
\leq
\NormLeb{p_T - \Interp(p_T)}{\Domain}
\lesssim
\inf_{\widehat{P}_T \in \PolyPiecAvg{0}}
\NormLeb{p_T - \widehat{P}_T}{\Domain}.
\end{equation*}
The second inequality holds true because $\Interp$ is a $L^2$-bounded projection onto $\PolyPiecAvg{0}$, in view of Lemma~\ref{L:interpolant}. Similarly, we have
\begin{equation*}
\Norm{m-\Interp(m)}_{\SobHD{\Domain}}
\lesssim
\inf_{\widehat{M} \in \PolyPiec{0}}
\Norm{m-\widehat{M}}_{\SobHD{\Domain}}
\end{equation*}
because $\Interp$ is a $H^{-1}$-bounded projection onto $\PolyPiec{0}$. Next, the second part of \eqref{cont-infsup-auxiliary-discrete}, the definition \eqref{interpolant} of $\Interp$, the inclusion $\Div(u) \in \LebH{\Domain}$ and Lemma~\ref{L:smootherF} yield
\begin{equation}
\label{operative-error-bound-proof2}
\begin{aligned}
\Norm{\LebPro{\R}\Interp(\Div(u))}_{\SobHD{\Domain}}
&\lesssim
\sup_{Q \in \PolyPiec{1}}
\dfrac{\int_\Domain \Div(u) (\mathcal{F}(\LebPro{\R}(Q))-\LebPro{\R}(Q)) }{\Norm{Q}_\DGLabel}\\
&=
\sup_{Q \in \PolyPiec{1}}
\dfrac{\int_\Domain (\Div(u) - \widehat{D}) (\mathcal{F}(\LebPro{\R}(Q))-\LebPro{\R}(Q)) }{\Norm{Q}_\DGLabel}
\end{aligned}
\end{equation}
for all $\widehat{D} \in \PolyPiec{1}$. Hence, we obtain
\begin{equation*}
\Norm{\LebPro{\R}\Interp(\Div(u))}_{\SobHD{\Domain}}
\lesssim
\inf_{ \widehat{D} \in \PolyPiec{1} } 
\NormLeb{\Div(u) - \widehat{D}}{\Domain}
\end{equation*}
according to Lemma~\ref{L:smootherF} and to the piecewise Poincar\`{e} inequality \cite[Theorem~5.3]{DiPietro.Ern:12}. Finally, the definition of $\widetilde{P}_F$, the boundedness of $\Rinv_\DGLabel$, Lemma~\ref{L:interpolant} and the piecewise Poincar\`{e} inequality imply that
\begin{equation*}
\Norm{p_F - \widetilde{P}_F}_\DGLabel 
\lesssim
\Norm{p_F - \widehat{P}_F}_\DGLabel + \Norm{\Interp(p_F - \widehat{P}_F)}_{\SobHD{\Domain}}
\lesssim
\Norm{p_F - \widehat{P}_F}_\DGLabel.
\end{equation*}
By inserting the above estimates into \eqref{operative-error-bound-proof}, we infer that
\begin{equation*}
\begin{split}
&\Err{(u,p_F), (U,P_F)} 
\lesssim 
\Big(   \mu^2\kappa 
\Norm{u - \widehat U}_\CRLabel^2 
+ \kappa \inf_{\widehat{P}_T \in \PolyPiecAvg{0}}
\NormLeb{p_T - \widehat{P}_T}{\Domain}^2 +  \\
&\quad+\mu \inf_{ \widehat{M} \in \PolyPiec{0} }
\Norm{m - \widehat M}_{\SobHD{\Domain}}^2
+ \mu \alpha^2 \inf_{ \widehat{D} \in \PolyPiec{1} }
\NormLeb{\Div(u)-\widehat D}{{\Domain}}^2
+ \mu \kappa^2
\Norm{p_F - \widehat P_F}_\DGLabel^2 
\Big) ^{\frac{1}{2}}.
\end{split}
\end{equation*}
We conclude by taking the infimum over $\widehat{U}$ and $\widehat{P}_F$.
\end{proof}

The error estimate in Theorem~\ref{T:operative-error-bound} does not exactly match the lower bound in \eqref{operative-error-bound-lower}, because of the additional summand
\begin{equation}
\label{operative-error-bound-additional}
\mu \alpha^2 \inf_{ \widehat{D} \in \PolyPiec{1} }
\NormLeb{\Div(u)-\widehat D}{{\Domain}}^2
\end{equation}
in the right-hand side. Two observations about this term are in order.

First, the presence of \eqref{operative-error-bound-additional} in our error estimate ultimately hinges on the Dirichlet boundary condition \eqref{Biot-BCs} on $u$. In fact, such condition is incorporated in the definition \eqref{CR-space} of the Crouzeix-Raviart space, entailing that the broken divergence $\DivM$ maps $\CR^\Dim$ onto $\PolyPiecAvg{0}$ (and not onto $\PolyPiec{0}$). Therefore, we must take $\LebPro{\PolyPiecAvg{0}}\Interp(\Div(u))$ (and not $\Interp(\Div(u))$) in the first line of \eqref{operative-error-bound-approximations}. This generates the term $\alpha\LebPro{\R}\Interp(\Div(u))$ in the second line of \eqref{operative-error-bound-auxiliary-variables}, which is then bounded by \eqref{operative-error-bound-additional}.

Second, for any $\Degree \geq 1$, we are allowed to replace \eqref{operative-error-bound-additional} with
\begin{equation*}
\mu \alpha^2 \inf_{ \widehat{D} \in \PolyPiec{\Degree} }
\NormLeb{\Div(u)-\widehat D}{{\Domain}}^2
\end{equation*}
in the error estimate of Theorem~\ref{T:operative-error-bound}, at the price of a possibly larger hidden constant. Indeed, we might construct the operator $\mathcal{F}$ in Lemma~\ref{L:smootherF} so that the condition \eqref{smootherF-moments} holds true for all polynomials of degree $\Degree$. Then, we may assume that $\widehat D \in \PolyPiec{\Degree}$ in \eqref{operative-error-bound-proof2}.

\section{Extensions of the main results}
\label{S:extensions}

In this final section we briefly outline some variants and generalizations of our previous results. We discuss an alternative discretization of the load terms, higher-order discretizations and more general boundary conditions than the ones in \eqref{Biot-BCs}.  

\subsection{Medius error analysis}
\label{SS:medius-analysis}

The use of the operators $\Smt_\CRLabel$ and $\Smt_\DGLabel$ in the right-hand side of the discretization \eqref{Biot-discrete} is necessary for the validity of the quasi-optimal error estimate in Theorem~\ref{T:quasi-optimality} and of the subsequent estimate in Theorem~\ref{T:operative-error-bound}, cf. \cite[section~4.2]{Veeser.Zanotti:18}. Still, this way of discretizing the right-hand side is not well-established in the framework of nonconforming finite element methods. More commonly, it is assumed that the loads $f$ and $g$ are more regular than in \eqref{loads-regularity}, for instance
\begin{equation*}
\label{loads-regularity-higher}
f \in \Leb{\Domain}^\Dim
\qquad \text{and} \qquad 
g \in \Leb{\Domain}.
\end{equation*}
When this is the case, the following discretization of the model problem can be considered
\begin{equation}
\label{Biot-discrete-medius-analysis}
\begin{gathered}[c]
\text{find} \quad (\underline U, \underline P_F)  \in \CR^\Dim \times \PolyPiec{1} \quad \text{such that}\\
\forall (V, Q_F) \in \CR^\Dim \times \PolyPiec{1} 
\quad \; 
B((\underline U, \underline P_F), (V, Q_F))
=
\int_\Domain f \cdot V + \int_\Domain g Q_F. 
\end{gathered}
\end{equation}

The results in the previous section readily allow one to derive an error estimate for this discretization, in the spirit of the medius error analysis of Gudi \cite{Gudi:10}, by proceeding as in \cite[Lemma~3.15]{Veeser.Zanotti:18b}. To see this, recall the solution $(U,P_F) \in \CR^\Dim \times \PolyPiec{1}$ of the discretization \eqref{Biot-discrete}. We first exploit the equivalence in Theorem~\ref{T:cont-infsup-discrete}. It follows that
\begin{equation*}
\begin{aligned}
\Normtr{(\underline U - U, \underline P_F - P_F)}_1
&\approx
\sup_{(V, Q_F) \in \CR^\Dim \times \PolyPiec{1}}
\dfrac{B((\underline U - U, \underline P_F - P_F), (V,Q_F))}{\Normtr{(V,Q_F)}_2}\\
& =
\sup_{(V, Q_F) \in \CR^\Dim \times \PolyPiec{1}}
\dfrac{\int_\Domain f \cdot (V-\Smt_\CRLabel(V)) + \int_\Domain g (Q_F - \Smt_\DGLabel(Q_F))}{\Normtr{(V,Q_F)}_2}
\end{aligned}
\end{equation*}
where the norms $\Normtr{\cdot}_1$ and $\Normtr{\cdot}_2$ are as in \eqref{norm-trial-discrete} and \eqref{norm-test-discrete}, respectively. According to Lemma~\ref{L:moments-preserved}, we have
\begin{equation*}
\int_{\partial \MeshEl} (V - \Smt_\CRLabel(V)) = 0
\qquad \text{and} \qquad
\int_\MeshEl (Q_F - \Smt_\DGLabel(Q_F)) = 0
\end{equation*}
for all $\MeshEl \in \Mesh$ and $(V, Q_F) \in \CR^\Dim \times \PolyPiec{1}$. Hence, we derive 
\begin{align*}
\int_\Domain f \cdot (V-\Smt_\CRLabel(V))
&\lesssim
\Big( \sum_{\MeshEl \in \Mesh} \mathrm{diam}(\MeshEl)^2 \NormLeb{f}{\MeshEl}^2 \Big)^\frac{1}{2}\Norm{V}_\CRLabel\\
\int_\Domain g (Q_F - \Smt_\DGLabel(Q_F))
&\lesssim
\Big( \sum_{\MeshEl \in \Mesh} \mathrm{diam}(\MeshEl)^2 \NormLeb{g}{\MeshEl}^2 \Big)^\frac{1}{2}\Norm{Q_F}_\DGLabel
\end{align*}
by invoking Poincar\`{e}-like inequalities and \eqref{Korn-piecewise}. We insert these estimates into the previous equivalence. Then, we apply the triangle inequality and we recall the definition \eqref{error-notion} of the error notion $\ERR$. We obtain
\begin{equation*}
\Err{(u,p_F), (\underline U, \underline P_F)}
\lesssim 
\Err{(u,p_F), (U,P_F)}
+
\Big( \sum_{\MeshEl \in \Mesh} \mathrm{diam}(\MeshEl)^2(\NormLeb{f}{\MeshEl}^2 + \NormLeb{g}{\MeshEl}^2) \Big)^{\frac{1}{2}}
\end{equation*}
where $(u,p_F) \in \SobH{\Domain}^\Dim \times \SobH{\Domain}$ is the solution of problem~\eqref{Biot-weak}. Finally, we establish an error bound for the discretization \eqref{Biot-discrete-medius-analysis} by invoking Theorem~\ref{T:operative-error-bound}. All the constants involved in our argument only depend on the shape parameter $\Shape{\Mesh}$ of $\Mesh$.

\subsection{Higher-order methods}
\label{SS:higher-order}

It is not difficult to design and analyze higher-order variants of the discretization \eqref{Biot-discrete} along the lines illustrated in section~\ref{S:discrete-problem}. To be more concrete, let $\Degree \geq 2$ be given. Perhaps, the most straight-forward extension of \eqref{spaces-pT-m} and \eqref{spaces-u-pF} consists in looking for approximations
\begin{align*}
U \in \CR_\Degree^\Dim \quad \text{of} \quad u
\qquad &\text{and} \qquad
P_F \in \PolyPiec{\Degree} \quad \text{of} \quad p_F\\
P_T \in \PolyPiecAvg{\Degree-1} \quad \text{of} \quad p_T
\qquad &\text{and} \qquad
M \in \PolyPiec{\Degree-1} \quad \text{of} \quad m.
\end{align*}
where the Crouzeix-Raviart space of degree $\Degree$ is defined as 
\begin{equation*}
\CR_\Degree := \{ S \in \PolyPiec{\Degree} \mid \forall \FaceEl \in \Faces, \: S_\FaceEl \in \Poly{\Degree-1}(\FaceEl) \;\; \int_\FaceEl \Jump{S}S_\FaceEl = 0 \}.
\end{equation*}

The definition of the space $\CR_\Degree$ and the results of \cite{Brenner:04} suggest to replace the form $A_\CRLabel$ and the norm $\Norm{\cdot}_\CRLabel$ from \eqref{form-CR} and \eqref{extended-norm-CR}, respectively, with
\begin{equation*}
A_\CRLabel(\widetilde U, V) := 
\int_\Domain \SymGradM(\widetilde{U}) \colon \SymGradM(V)
\qquad \text{and} \qquad
\Norm{\widetilde{u} - \widetilde U}_\CRLabel :=
\NormLeb{\SymGradM(\widetilde u - \widetilde U)}{\Domain}
\end{equation*}
because the jump penalization is no longer necessary for the validity of the piecewise Korn's inequality \eqref{Korn-piecewise}. In contrast, we do not modify the definitions of the form $A_\DGLabel$ and of the norm $\Norm{\cdot}_\DGLabel$ from \eqref{form-DG} and \eqref{extended-norm-DG}, respectively. Then, we define the form $B: (\CR_\Degree^\Dim \times \PolyPiec{\Degree}) \times (\CR_\Degree^\Dim \times \PolyPiec{\Degree}) \to \R$ as in \eqref{form-B}, with the only difference that the $L^2$-orthogonal projection $\LebPro{\PolyPiec{0}}(\widetilde{P}_F)$ is replaced with $\LebPro{\PolyPiec{\Degree-1}}(\widetilde{P}_F)$.

We consider the following discretization of the model problem~\eqref{Biot-weak}
\begin{equation}
\label{Biot-discrete-high-order}
\begin{gathered}[c]
\text{find} \quad (U, P_F)  \in \CR_\Degree^\Dim \times \PolyPiec{\Degree} \quad \text{such that}\\
\forall (V, Q_F) \in \CR_\Degree^\Dim \times \PolyPiec{\Degree} 
\quad \; 
B((U, P_F), (V, Q_F))
=
\left\langle f, \Smt_\CRLabel (V)\right\rangle + \left\langle g, \Smt_\DGLabel (Q_F)\right\rangle. 
\end{gathered}
\end{equation}
Here, the operator $\Smt_\CRLabel: \CR_\Degree^\Dim \to \SobH{\Domain}^\Dim$ should be defined so as to preserve the moments up to the order $\Degree-1$ on the interior faces of $\Mesh$ and up to the order $\Degree-2$ in the simplices of $\Mesh$, cf. \eqref{moments-preserved-CR}. Similarly, the operator $\Smt_\DGLabel: \PolyPiec{\Degree} \to \SobH{\Domain}$ should preserve the moments up to the order $\Degree-1$ on the interior faces and in the simplices of $\Mesh$, cf. \eqref{moments-preserved-DG}. Both the operators may be defined with the help of bubble functions, by a similar technique as in section~\ref{SS:FE-discretization}, cf. \cite[section~3.3]{Veeser.Zanotti:19b} and \cite[section~3.2]{Veeser.Zanotti:18b}. Alternatively, for sufficiently smooth loads, one might discretize the right-hand side as discussed in section~\ref{SS:medius-analysis}. 

The stability and the error analysis of the discretization \eqref{Biot-discrete-high-order} make use of the arguments in sections~\ref{SS:stability-discretization} and \ref{SS:error-analysis}. 

\begin{remark}[dG approximation of the displacement]
\label{R:dG-approximation-displacement}
The use of the space $\CR_\Degree^\Dim$ for the approximation of the displacement has two potential disadvantages. First, the construction of a nodal basis for this space is possibly involved, depending on $\Degree$ and $\Dim$. Second, a counterpart of the first equivalence in \eqref{cont-infsup-auxiliary-discrete} is known to hold only for certain combinations of $\Degree$ and $\Dim$, although we are not aware of any negative result. For these two reasons, one might consider a `fully dG' variant of the discretization~\eqref{Biot-discrete-high-order}, where the approximate displacement is sought in $\PolyPiec{\Degree}^\Dim$ and the differential operators acting on it are discretized as usual in discontinuous Galerkin methods. 
\end{remark}

\subsection{Mixed boundary conditions}
\label{SS:mixed-BCs}

Up to this point, we have assumed that homogeneous essential boundary conditions are enforced on $\partial \Domain$ for both the the displacement $u$ and the fluid pressure $p_F$, see \eqref{Biot-BCs}. A more general set of homogeneous boundary conditions is given by 
\begin{equation}
\label{Biot-BCs-mixed}
\begin{aligned}
u = 0 
\quad \text{on} \quad \Gamma_u 
\qquad &\text{and} \qquad 
(2 \mu \SymGrad(u) + p_T I)\Normal = 0 
\quad\text{on} \quad \Gamma_t \\
p_F = 0 
\quad \text{on} \quad \Gamma_p
\qquad &\text{and} \qquad 
k \Grad p_F \cdot \Normal = 0
\quad\text{on} \quad \Gamma_f
\end{aligned}
\end{equation}
where $\Gamma_u \cup \Gamma_t = \partial \Domain = \Gamma_p \cup \Gamma_f$ and $\Gamma_u \cap \Gamma_t = \emptyset = \Gamma_p \cap \Gamma_f$.

Denote by $|\cdot|$ the $(\Dim-1)$-dimensional Hausdorff measure on $\partial \Domain$. When $|\Gamma_t| > 0$ and/or $|\Gamma_f| > 0$, the spaces for the variables $u$ and $p_F$ and for the loads $f$ and $g$ are modified as usual when mixed boundary conditions are enforced. The only remarkable difference, compared to the discussion in section~\ref{S:continuous-problem}, is that the total pressure is given by 
\begin{equation*}
p_T = \lambda \Div(u) - \alpha p_F, \qquad p_T \in \Leb{\Domain} 
\end{equation*}
provided that $|\Gamma_t| > 0$. Moreover, the size of the total fluid content is measured in the norm of the space dual to the one to which $p_F$ belongs. Counterparts of the equivalences \eqref{cont-infsup-auxiliary} can be derived also in this case, entailing that the statements of Theorem~\ref{T:cont-infsup} and of Corollary~\ref{C:stability} still hold true, up to the necessary modifications.

Concerning the discretization of the problem~\eqref{Biot-weak} with the boundary conditions \eqref{Biot-BCs-mixed}, we proceed as in section~\ref{S:discrete-problem} with the following exceptions. We assume that each boundary face of the mesh $\Mesh$ is contained either in $\Gamma_u$ or in $\Gamma_t$ and either in $\Gamma_p$ or in $\Gamma_f$. The definition of the Crouzeix-Raviart space and of the forms $A_\CRLabel$ and $A_\DGLabel$ need to be modified as usual when mixed boundary conditions are involved. In particular, the jumps at the boundary should be penalized only on $\Gamma_u$ and on $\Gamma_p$. The approximate total pressure is given by 
\begin{equation*}
P_T = \lambda \DivM(U) - \alpha \LebPro{\PolyPiec{0}}(P_F),
\qquad P_T \in \PolyPiec{0}
\end{equation*} 
provided that $|\Gamma_t| > 0$. The operators $\Smt_\CRLabel$ and $\Smt_\DGLabel$ should preserve the averages not only on the interior faces of $\Mesh$ but also on the boundary faces that are contained in $\Gamma_t$ and in $\Gamma_f$, respectively, cf. Lemma~\ref{L:moments-preserved}. The stability and the error analyses of the resulting discretization proceed as indicated in sections~\ref{SS:stability-discretization} and \ref{SS:error-analysis}. For $|\Gamma_t| > 0$, the term \eqref{operative-error-bound-additional} may be omitted in the error estimate of Theorem~\ref{T:operative-error-bound}, as mentioned at the end of section~\ref{SS:error-analysis}.

\subsection*{Acknowledgment}
The authors wish to thank C. Kreuzer, A. Linke and A. Veeser for fruitful discussions concerning the approach described in section~\ref{SS:nonsymmetric-setting} and, in particular, the statement and the proof of Theorem~\ref{T:cont-infsup}.

\subsection*{Funding}
Pietro Zanotti was supported by the INdAM-GNCS through
the program ``Finanziamento giovani ricercatori 2019-2020'' and by the MIUR-PRIN project ``Numerical analysis of full and reduced order methods for partial differential equations''.

\end{document}